\setlist[enumerate,1]{label=\textup{(\arabic*)}}
\definecolor{todo-background-color}{gray}{0.95}
\def\MR#1{}
  \gdef\@settitle{%
    \vspace*{10pt}
    \begin{flushleft}%
      \Large\bfseries
      \strut\@title\strut
    \end{flushleft}%
  }
  \gdef\@setauthors{%
    \begingroup
    \def\thanks{\protect\thanks@warning}%
    \trivlist
    \raggedright
    \large \@topsep31\p@\relax
    \advance\@topsep by -\baselineskip
    \item\relax
    \author@andify\authors
    \def\\{\protect\linebreak}%
    \authors
    \ifx\@empty\contribs
    \else
      ,\penalty-3 \space \@setcontribs
      \@closetoccontribs
    \fi
    \normalfont
    \endtrivlist
    \endgroup
  }
  \gdef\@setaddresses{\par
    \nobreak \begingroup
    \small\raggedright
    \def\author##1{\nobreak\addvspace\smallskipamount}%
    \def\\{\unskip, \ignorespaces}%
    \interlinepenalty\@M
    \def\address##1##2{\begingroup
      \par\addvspace\bigskipamount\noindent
      \@ifnotempty{##1}{(\ignorespaces##1\unskip) }%
      {\ignorespaces##2}\par\endgroup}%
    \def\curraddr##1##2{\begingroup
      \@ifnotempty{##2}{\nobreak\noindent\curraddrname
        \@ifnotempty{##1}{, \ignorespaces##1\unskip}\/:\space
        ##2\par}\endgroup}%
    \def\email##1##2{\begingroup
      \@ifnotempty{##2}{\nobreak\noindent E-mail address%
        \@ifnotempty{##1}{, \ignorespaces##1\unskip}\/:\space
        \ttfamily##2\par}\endgroup}%
    \def\urladdr##1##2{\begingroup
      \def~{\char`\~}%
      \@ifnotempty{##2}{\nobreak\noindent\urladdrname
        \@ifnotempty{##1}{, \ignorespaces##1\unskip}\/:\space
        \ttfamily##2\par}\endgroup}%
    \addresses
    \endgroup
    \global\let\addresses=\@empty
  }
  \gdef\@setabstracta{%
      \ifvoid\abstractbox
    \else
      \skip@19pt \advance\skip@-\lastskip
      \advance\skip@-\baselineskip \vskip\skip@
      \box\abstractbox
      \prevdepth\z@ 
      \vskip-28pt
    \fi
  }
  \renewenvironment{abstract}{%
    \ifx\maketitle\relax
      \ClassWarning{\@classname}{Abstract should precede
        \protect\maketitle\space in AMS document classes; reported}%
    \fi
    \global\setbox\abstractbox=\vtop \bgroup
      \normalfont\small
      \list{}{\labelwidth\z@
        \leftmargin0pc \rightmargin\leftmargin
        \listparindent\normalparindent \itemindent\z@
        \parsep\z@ \@plus\p@
        
      }%
      \item[\hskip\labelsep\bfseries\abstractname.]%
  }{%
    \endlist\egroup
    \ifx\@setabstract\relax \@setabstracta \fi
  }
  \gdef\ps@headings{\ps@empty
    \def\@evenhead{%
      \setTrue{runhead}%
      \normalfont\scriptsize
      \rlap{\thepage}\hfill
      \def\thanks{\protect\thanks@warning}%
      \leftmark{}{}}%
    \def\@oddhead{%
      \setTrue{runhead}%
      \normalfont\scriptsize
      \def\thanks{\protect\thanks@warning}%
      \rightmark{}{}\hfill \llap{\thepage}}%
    \let\@mkboth\markboth
  }\ps@headings
  \gdef\section{\@startsection{section}{1}%
    \z@{-1.4\linespacing\@plus-.5\linespacing}{.8\linespacing}%
    {\normalfont\bfseries\large}}
  \gdef\subsection{\@startsection{subsection}{2}%
    \z@{-.8\linespacing\@plus-.3\linespacing}{.5\linespacing\@plus.2\linespacing}%
    {\normalfont\bfseries}}
  \gdef\subsubsection{\@startsection{subsubsection}{3}%
    \z@{.7\linespacing\@plus.2\linespacing}{-1.5ex}%
    {\normalfont\bfseries}}
  \gdef\@secnumfont{\bfseries}
  \renewcommand\contentsnamefont{\bfseries}
  \gdef\@starttoc#1#2{\begingroup
    \setTrue{#1}%
    \par\removelastskip\vskip\z@skip
    \@startsection{}\@M\z@{\linespacing\@plus\linespacing}%
      {.5\linespacing}{
        \contentsnamefont}{#2}%
    \ifx\contentsname#2%
    \else \addcontentsline{toc}{section}{#2}\fi
    \makeatletter
    \@input{\jobname.#1}%
    \if@filesw
      \@xp\newwrite\csname tf@#1\endcsname
      \immediate\@xp\openout\csname tf@#1\endcsname \jobname.#1\relax
    \fi
    \global\@nobreakfalse \endgroup
    \addvspace{32\p@\@plus14\p@}%
    \let\tableofcontents\relax
  }
  \gdef\contentsname{Contents}
  \gdef\l@section{\@tocline{2}{.5ex}{0mm}{5pc}{}}
  \gdef\l@subsection{\@tocline{2}{0pt}{2em}{5pc}{}}
\gdef\to{\mathchoice{\longrightarrow}{\rightarrow}{\rightarrow}{\rightarrow}}
\newcommand{\shortxra}[2][]{\ext@arrow 0359\rightarrowfill@{#1}{#2}}
\gdef\longrightarrowfill@{\arrowfill@\relbar\relbar\longrightarrow}
\newcommand{\longxra}[2][]{\ext@arrow 0359\longrightarrowfill@{#1}{#2}}
\renewcommand{\xrightarrow}[2][]{\mathchoice{\longxra[#1]{#2}}%
  {\shortxra[#1]{#2}}{\shortxra[#1]{#2}}{\shortxra[#1]{#2}}}
\gdef\longtwoheadrightarrow{\relbar\joinrel\twoheadrightarrow}
\gdef\otimesover#1{\mathbin{\mathop{\otimes}_{#1}}}
\gdef\Nopagebreak{\@nobreaktrue\nopagebreak}
\newtheoremstyle{mytheorem}
        {}{}              
        {\itshape}                      
        {}                              
        {\bfseries}                     
        {.}                
        {5\p@ plus\p@ minus\p@\relax}         
        {\thmname{#1} #2\thmnote{ {\the\thm@notefont(#3)}}}
\theoremstyle{plain}
\newtheorem*{conjecture*}{Conjecture}
\newtheorem*{assertion*}{Assertion}
\theoremstyle{mytheorem}
\newtheorem{theorem}{Theorem}[section]
\newtheorem{theoremalpha}{Theorem}
\newtheorem{proposition}[theorem]{Proposition}
\newtheorem{lemma}[theorem]{Lemma}
\newtheorem{conjecture}[theorem]{Conjecture}
\newtheorem{assertion}{Assertion}[section]
\theoremstyle{definition}
\newtheorem{definition}[theorem]{Definition}
\newtheorem{question}[theorem]{Question}
\newtheorem{example}[theorem]{Example}
\newtheorem{remark}[theorem]{Remark}
\newtheoremstyle{theorem-giventitle}
        {}{}              
        {\itshape}                      
        {}                              
        {\bfseries}                     
        {.}                             
        { }                             
        {\thmnote{\bfseries#3}}
\theoremstyle{theorem-giventitle}
\newtheorem{theorem-named}{}
\newtheorem{question-named}{}
\newtheorem{conjecture-named}{}
\newtheoremstyle{definition-giventitle}
        {}{}              
        {}                      
        {}                              
        {\bfseries}                     
        {.}                             
        {.7em}                             
        {\thmnote{\bfseries#3}}
\theoremstyle{definition-giventitle}
\newtheorem{step-named}{}
\newtheorem{case-named}{}
\newtheorem{remark-named}{}
\numberwithin{equation}{section}
\def\Z{\mathbb{Z}}
\def\Q{\mathbb{Q}}
\def\R{\mathbb{R}}
\def\C{\mathbb{C}}
\xdef\csname c\l\endcsname{\noexpand\mathcal{\l}}
\xdef\csname b\l\endcsname{\noexpand\mathbb{\l}}
\def\Ker{\operatorname{Ker}}
\def\Coker{\operatorname{Coker}}
\def\Im{\operatorname{Im}}
\def\Hom{\mathrm{Hom}}
\def\sign{\operatorname{sign}}
\def\sm{\setminus}
\def\tpmod#1{{\@displayfalse\pmod{#1}}}
\def\Bl{B\ell}
\def\lk{\mathrm{lk}}
\def\rhot{\rho^{(2)}}
\def\osigmat{\bar\sigma^{(2)}}
\def\spinc{spin$^c$}
\DeclareMathOperator\Spin{Spin}
\def\Spinc{\Spin^c}
\def\Char{\operatorname{Char}}
\def\mathbinover#1#2{\mathbin{\mathop{#1}\limits_{#2}}}
\def\cupover#1{\mathbinover{\cup}{#1}}
\def\Wh{\mathrm{Wh}}
\def\top{{\mathrm{top}}}
\def\smooth{{\mathrm{sm}}}
\def\gr{\mathrm{gr}}
\def\setminus{\smallsetminus}
\def\nstrut{{\vphantom{1}}}
\def\sbmatrix#1{\left[\begin{smallmatrix}#1\end{smallmatrix}\right]}
\begin{document}

\vspace*{0mm}

\title%
[Primary decomposition in the smooth concordance group]
{Primary decomposition in the smooth concordance group of
topologically slice knots}

\author{Jae Choon Cha}
\address{
  Center for Research in Topology\\
  POSTECH\\
  Pohang 37673\\
  Republic of Korea\\
  and\linebreak
  School of Mathematics\\
  Korea Institute for Advanced Study \\
  Seoul 02455\\
  Republic of Korea
}
\email{jccha@postech.ac.kr}

\gdef\subjclassname{\textup{2010} Mathematics Subject Classification}
\expandafter\let\csname subjclassname@1991\endcsname=\subjclassname
\expandafter\let\csname subjclassname@2000\endcsname=\subjclassname
\subjclass{%
}

\begin{abstract}
  We address primary decomposition conjectures for knot concordance groups,
  which predict direct sum decompositions into primary parts.  We show
  that the smooth concordance group of topologically slice knots has a large
  subgroup for which the conjectures are true and there are infinitely many
  primary parts each of which has infinite rank. This supports the conjectures
  for topologically slice knots.  We also prove analogues for the associated
  graded groups of the bipolar filtration of topologically slice knots.  Among
  ingredients of the proof, we use amenable $L^2$-signatures, Ozsv\'ath-Szab\'o
  $d$-invariants and N\'emethi's result on Heegaard Floer homology of Seifert
  3-manifolds. In an appendix, we present a general formulation of the
  notion of primary decomposition.
\end{abstract}

\maketitle

\thispagestyle{empty}

\section{Introduction and main results}
\label{section:introduction}

It is a major open problem to classify knots in 3-space modulo concordance. Our
understanding is far from complete, for both topological and smooth knot
concordance groups.  The sophistication of the smooth case beyond topological
concordance is measured by the smooth concordance group of topologically slice
knots, which has been actively investigated using modern smooth techniques.

In the study of knot concordance, the notion of \emph{primary decomposition}
first appeared in Jerome Levine's foundational work~\cites{Levine:1969-1,
Levine:1969-2}. Briefly, he constructed an algebraic concordance group of
Seifert matrices and proved that it is isomorphic to the knot concordance group
in high odd dimensions, while it gives algebraic invariants in the classical
dimension~\cite{Levine:1969-1}. He proved that a rational coefficient version of
the algebraic concordance group decomposes into a direct sum of certain
``primary parts'' indexed by irreducible factors of Alexander polynomials.  This
plays a crucial role in his well-known classification result that the algebraic
concordance group and high odd dimensional knot concordance groups are
isomorphic to $\Z^\infty\oplus(\Z_2)^\infty \oplus
(\Z_4)^\infty$~\cite{Levine:1969-2}.

For the low dimensional case, it is natural to ask whether the classical knot
concordance groups and related objects admit analogous primary decomposition,
and to study the structures via primary parts.  In an appendix, we formulate a
general notion of primary decomposition, which specializes to several specific
cases including concordance and rational homology cobordism, and discuss related
questions. We hope this is useful for future study as well. The appendix also
discusses known earlier results from the viewpoint of the general formulation.
In particular, for topological knot concordance, there were remarkable results
related to primary decomposition~\cites{Livingston:2002-2, Kim:2005-2,
Kim-Kim:2008-1, Cochran-Harvey-Leidy:2009-2, Cochran-Harvey-Leidy:2009-3,
Kim-Kim:2014-1}.

In this paper, we begin a detailed study of the smooth concordance group of
topologically slice knots~$\cT$, via primary decomposition. Precise statements
of our results are given in Sections~\ref{subsection:pd-top-slice}
and~\ref{subsection:pd-bipolar} below. Briefly, the conjectural primary
decompositions (see Question~\ref{question:pd-top-slice}) are direct sum
decompositions, along irreducible factors of Alexander polynomials, of the
quotient $\cT/\Delta$ where $\Delta$ is the subgroup generated by knots with
unit Alexander polynomials. (Taking the quotient by $\Delta$ is along the same
lines of ignoring units for factorizations in a ring.)  We show that the
conjectures hold for a large subgroup of~$\cT$, and that there are infinitely
many primary parts each of which has infinite rank (see
Theorems~\ref{theorem:main-top-slice}
and~\ref{theorem:main-independence-top-slice})\@. This provides evidence
supporting primary decomposition conjectures, and in addition reveals a rich
structure in $\cT/\Delta$, generalizing a result of Hedden, Livingston and
Ruberman~\cite{Hedden-Livingston-Ruberman:2010-1} that $\cT/\Delta$ has infinite
rank.

In proving this, the essential challenge is to find irreducible polynomials
$\lambda(t)$ and topologically slice knots (whose nontrivial linear combinations
are) not concordant to any knot whose Alexander polynomial is relatively prime
to~$\lambda(t)$.  It appears to be hard, if possible at all, to do this using
known smooth invariants such as those from gauge theory, Heegaard Floer homology
and Khovanov homology.  Our proof combines amenable $L^2$-signatures, which are
key ingredients of recent studies of \emph{topological} concordance, with
\emph{smooth} information from Heegaard Floer homology of infinitely many
branched covers of a knot.  It seems intriguing to study whether more recent
smooth invariants are useful in understanding primary decomposition, motivated
from the results and approaches of this paper.

We also prove results which support primary decomposition conjectures for the
bipolar filtration of topologically slice knots (see
Theorems~\ref{theorem:main-bipolar}
and~\ref{theorem:main-independence-bipolar} below).

\subsection{Primary decomposition for topologically slice knots}
\label{subsection:pd-top-slice}

In what follows, we state primary decomposition conjectures and main results
for~$\cT$.

For a knot $K$ in $S^3$, denote the Alexander polynomial by~$\Delta_K$, and
regard it as an element in the Laurent polynomial ring~$\Q[t^{\pm1}]$.  Then
$\Delta_K$ is well-defined up to associates.  Recall that $\lambda$ and $\mu \in
\Q[t^{\pm1}]$ are \emph{associates} if $\lambda= at^k \mu$ for some $a\in \Q\sm
\{0\}$ and $k\in \Z$. The standard involution on $\Q[t^{\pm1}]$ is defined by
$\bigl(\sum a_i t^i\bigr)^* = \sum a_i t^{-i}$.  We say that $\lambda$ and $\mu
\in \Q[t^{\pm1}]$ are \emph{$*$-associates} if $\lambda$ is an associate of
either $\mu$ or~$\mu^*$.  

Denote the smooth concordance class of a knot $K$ by~$[K]$.  Recall that $\Delta
= \{[K] \in \cT \mid \Delta_K$ is trivial$\}$.  For an irreducible $\lambda$ in
$\Q[t^{\pm1}]$, let
\begin{align*}
  \cT_\lambda &=\{[K] \in \cT \mid \Delta_K \text{ is an associate of }
  (\lambda\lambda^*)^k\text{ for some }k\ge 0\},
  \\
  \cT^\lambda &= \{[K] \in \cT \mid \Delta_K
  \text{ is relatively prime to }\lambda\}.
\end{align*}
We remark that the product $\lambda\lambda^*$ in the definition of $\cT_\lambda$
reflects the Fox-Milnor condition that $\Delta_K$ is an associate of $ff^*$ for
some $f\in\Q[t^{\pm1}]$ when $K$ is topologically slice. Note that $\cT_\lambda$
and $\cT^\lambda$ are subgroups containing $\Delta$ for every
irreducible~$\lambda$. Also, $\cT_\lambda=\cT_\mu$ and $\cT^\lambda=\cT^\mu$ if
$\lambda$ and $\mu$ are $*$-associates.

Primary decomposition for topologically slice knots concerns natural
homomorphisms
\[
  \Phi_L\colon \bigoplus_{[\lambda]} \cT_\lambda/\Delta \to \cT/\Delta
  \quad\text{and}\quad
  \Phi_R\colon \cT/\Delta \to \bigoplus_{[\lambda]} \cT/\cT^\lambda.
\]
Here the index $[\lambda]$ of the direct sums varies over the $*$-associate
classes of irreducibles $\lambda$ that arise as the factor of an Alexander
polynomial of a knot.  The homomorphism $\Phi_L$ is defined to be the sum of the
inclusions $\cT_\lambda/\Delta \hookrightarrow \cT/\Delta$.  Since $\Delta_K$ is
a product of finitely many irreducibles, the quotient epimorphisms $\cT/\Delta
\twoheadrightarrow \cT/\cT^\lambda$ induce a homomorphism into the direct sum,
which is our $\Phi_R$ above.  This formulation is influenced by earlier work in
the literature, particularly Levine~\cite{Levine:1969-1,Levine:1969-2} and
Cochran, Harvey and
Leidy~\cite{Cochran-Harvey-Leidy:2009-2,Cochran-Harvey-Leidy:2009-3}.

\begin{remark-named}[An informal remark]
  
  One might regard elements in $\cT_\lambda/\Delta \subset \cT/\Delta$ as
  ``$\lambda$-primary'', and $\cT/\Delta \to \cT/\cT^\lambda$ as ``forgetting
  those coprime to~$\lambda$'' or ``extracting the $\lambda$-primary component''
  of an element. Then, the surjectivity of $\Phi_L$ means the existence of a
  ``decomposition into a sum of primary elements,'' and the injectivity  of
  $\Phi_L$ means the uniqueness of such a decomposition.  Also, the injectivity
  of $\Phi_R$ means that ``primary components determine an element uniquely,''
  and the surjectivity of $\Phi_R$ means that ``every combination of primary
  components is realizable.''

\end{remark-named}

\begin{question}
  [Primary decomposition for topologically slice knots]
  \phantomsection\leavevmode\Nopagebreak
  \label{question:pd-top-slice}
  \begin{enumerate}[label=\textup{(\arabic*)}]
    \item\label{item:left-pd-top-slice} Left primary decomposability: is
    $\Phi_L$ an isomorphism?
    \item\label{item:right-pd-top-slice} Right primary decomposability: is
    $\Phi_R$ an isomorphism?
  \end{enumerate}
  More generally, (if they are not isomorphisms) what are their kernel and
  cokernel?

  We conjecture an affirmative answer to~\ref{item:right-pd-top-conc} and that
  $\Phi_L$ is injective at the least. In any case, it appears to be interesting
  to study $\cT_\lambda/\Delta$ and $\cT/\cT^\lambda$, which we call \emph{left}
  and \emph{right primary parts} (or \emph{primary factors}). This leads us
  particularly to the following.

  \begin{enumerate}[resume]
    \item Nontriviality of primary parts: are $\cT_\lambda/\Delta$ and
    $\cT/\cT^\lambda$ nonzero for each irreducible $\lambda$ that arises as the factor of an Alexander polynomial of a knot?
  \end{enumerate}
  More generally (if they are nontrivial), what are the isomorphism types of
  the primary parts $\cT_\lambda/\Delta$ and~$\cT/\cT^\lambda$?
  \begin{enumerate}[resume]
    \item Relationship of left and right primary parts: is the
    composition
    \[
      \cT_\lambda/\Delta \hookrightarrow \cT/\Delta \twoheadrightarrow \cT/\cT^\lambda
    \]
    an isomorphism?
  \end{enumerate}
\end{question}

We remark that Definition~\ref{definition:weak-pd} in the appendix generalizes
the notion of left and right primary decomposition to a broader context. Also,
regarding the relationship of
Question~\ref{question:pd-top-slice}\ref{item:left-pd-top-slice}
and~\ref{item:right-pd-top-slice}, see Lemma~\ref{lemma:surj-inj-Phi_L-Phi_R} in
the appendix.

The first main result of this paper, which is given as
Theorem~\ref{theorem:main-top-slice} below, says that there is a large subgroup
of $\cT$ for which the answers to the above questions are affirmative and many
primary parts of $\cT$ are highly nontrivial.  To state the result, we use the
following notation. For a subgroup $\cS$ of $\cT$ and an irreducible $\lambda\in
\Q[t^{\pm1}]$, let $\cS_\lambda$ be the subgroup of $[K]\in \cS$ with $\Delta_K$
a power of $\lambda\lambda^*$, and $\cS^\lambda$ be the subgroup of $[K]\in \cS$
with $\gcd(\Delta_K,\lambda)=1$.  That is,  $\cS_\lambda = \cS\cap \cT_\lambda$
and $\cS^\lambda = \cS\cap \cT^\lambda$.  Then one can ask
Question~\ref{question:pd-top-slice} for $\cS$ in place of~$\cT$. Let
\begin{equation}
  \Lambda=\{(m+1)t-m \mid m \text{ is a positive integer}\}.
  \label{equation:collection-Lambda}  
\end{equation}
Note that $\Lambda$ is an infinite collection of pairwise non-$*$-associate irreducibles $\lambda$ such that $\lambda\lambda^*$ is an Alexander polynomial of a knot.

\begin{theoremalpha}
  \label{theorem:main-top-slice}
  There is a subgroup $\cS$ in $\cT$ containing $\Delta$ which satisfies the
  following:
  \begin{enumerate}
    \item \label{item:nontrivial-primary-parts}
    For every $\lambda\in\Lambda$, $\cS_\lambda/\Delta\cong \Z^\infty$,
    $\cS/\cS^\lambda \cong \Z^\infty$ and the composition $\cS_\lambda/\Delta
    \hookrightarrow \cS/\Delta \twoheadrightarrow \cS/\cS^\lambda$ is an
    isomorphism.
    \item \label{item:left-primary-decomposablity}
    The inclusions $\cS_\lambda/\Delta \to \cS/\Delta$ induce an isomorphism
    $\bigoplus_{\lambda\in \Lambda} \cS_\lambda/\Delta \to \cS/\Delta$.
    \item\label{item:right-primary-decomposablity}
    The surjections $\cS/\Delta \to \cS/\cS^\lambda$ induce an isomorphism
    $\cS/\Delta \to \bigoplus_{\lambda\in\Lambda} \cS/\cS^\lambda$.
  \end{enumerate}
  \begin{equation}
    \label{equation:subgroup-diagram}
    \begin{tikzcd}[row sep=large,column sep=6ex]
      \llap{$\bigoplus_{\lambda\in\Lambda} \Z^\infty={}$}
      \bigoplus_{\lambda\in \Lambda} \cS_\lambda/\Delta \ar[r,"\cong"] \ar[d,hook]
      & \cS/\Delta \ar[r,"\cong"] \ar[d,hook]
      & \bigoplus_{\lambda\in \Lambda} \cS/\cS^\lambda \ar[d,hook]
      \rlap{${}=\bigoplus_{\lambda\in\Lambda} \Z^\infty$}
      \\
      \bigoplus_{[\lambda]} \cT_\lambda/\Delta \ar[r,"\Phi_L"']
      & \cT/\Delta \ar[r,"\Phi_R"']
      & \bigoplus_{[\lambda]} \cT/\cT^\lambda
    \end{tikzcd}
  \end{equation}
\end{theoremalpha}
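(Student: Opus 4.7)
The plan is to construct $\cS$ explicitly as the subgroup of $\cT$ generated by $\Delta$ together with an infinite family of topologically slice knots $\{K_{\lambda,i}\}_{(\lambda,i)\in\Lambda\times\Z_{>0}}$ with $K_{\lambda,i}\in\cT_\lambda$. Surjectivity of $\bigoplus_{\lambda}\cS_\lambda/\Delta\to\cS/\Delta$ is then automatic from the definition of $\cS$, so the theorem reduces to two quantitative statements: within each $\cS_\lambda$, the classes $[K_{\lambda,i}]$ are $\Z$-linearly independent modulo $\Delta$, giving $\cS_\lambda/\Delta\cong\Z^\infty$; and no nontrivial $\Z$-linear combination of the $K_{\lambda,i}$ is smoothly concordant to a knot whose Alexander polynomial is coprime to $\lambda$, which simultaneously forces $\cS_\mu\cap\cS^\mu=\Delta$, makes the composition in (1) an isomorphism, and, by separating the primary parts, implies injectivity in (2) and the isomorphism in (3).

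For the generators, for each $m\ge 1$ with $\lambda=(m+1)t-m\in\Lambda$ I would first fix a seed knot $J_m$ with $\Delta_{J_m}$ a $*$-associate of $\lambda\lambda^*$ whose $p$-fold cyclic branched covers $\Sigma_p(J_m)$ are Seifert-fibered rational homology spheres; a suitable twist knot is a natural candidate. I would then obtain $K_{\lambda,i}$ by infecting a topologically slice base knot (such as a Whitehead double of the unknot) along curves null-homotopic in the complement of a topological slice disk, using iterated satellites of $J_m$ with varying iteration parameter $i$ as the infecting knots. This construction preserves topological sliceness, multiplies Alexander polynomials so that $\Delta_{K_{\lambda,i}}$ is a power of $\lambda\lambda^*$ up to units, and produces a family whose $L^2$-signatures of the infection patterns and whose Heegaard-Floer $d$-invariants of branched covers can be computed in terms of those of $J_m$.

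Two detection mechanisms are deployed in parallel. Amenable $L^2$-signatures $\rhot$ of solvable covers associated with the infection patterns, in the spirit of the Cochran-Harvey-Leidy machinery, control the topological concordance behaviour of $\Z$-linear combinations of the $K_{\lambda,i}$; the iteration parameter $i$ is tuned so that the corresponding values are $\Q$-linearly independent. Ozsv\'ath-Szab\'o $d$-invariants of the $p$-fold cyclic branched covers for an infinite family of primes $p$ tailored to $\lambda$ then supply the smooth obstructions: N\'emethi's theorem computes $d$-invariants of $\Sigma_p(J_m)$ explicitly, and satellite-type formulas transfer these to sharp obstructions for $K_{\lambda,i}$. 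The crucial separation statement --- that no knot with Alexander polynomial coprime to $\lambda$ can produce the observed $d$-invariant profile --- follows from the fact that, for the chosen primes, $|H_1(\Sigma_p(K))|$ of such a $K$ is constrained by a congruence depending on $\lambda$, restricting the possible $d$-invariants of $\Sigma_p(K)$ to a proper subset that the values for the $K_{\lambda,i}$ lie outside of.

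The main obstacle is this between-parts separation rather than the in-class independence: proving that a nontrivial combination of $\lambda$-primary knots is not smoothly concordant to \emph{any} knot with Alexander polynomial coprime to $\lambda$. This is exactly the obstruction that most known smooth invariants, including those from gauge theory, Khovanov homology, and vanilla Heegaard-Floer $\tau$- and $\nu$-invariants, do not seem to capture, which is precisely why the proof must combine amenable $L^2$-signatures with $d$-invariants of infinitely many branched covers. The technical heart of the argument is a uniform analysis across the infinite family of primes associated with each $\lambda\in\Lambda$, showing that the $d$-invariant obstruction persists under arbitrary $\Z$-linear combinations and modifications by knots in $\cT^\lambda$. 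Once this is in hand, the three statements of the theorem and the diagram~\eqref{equation:subgroup-diagram} assemble by routine linear-algebraic bookkeeping.
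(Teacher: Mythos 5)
Your reduction of the theorem to a linear-independence statement about a two-parameter family $\{K_{\lambda,i}\}$ matches the paper's formal argument, and you correctly identify the main ingredients: genus-one satellite constructions, amenable $L^2$-signatures in the Cochran--Harvey--Leidy style, and $d$-invariants of branched covers computed via N\'emethi's theorem. But there are two genuine gaps.

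First, the construction as described is internally inconsistent. If you infect a topologically slice base knot along curves that are null-homotopic in the complement of a topological slice disk, the result has the same trivial Alexander polynomial as the base, not a power of $\lambda\lambda^*$. In the paper the $\lambda$-primary shape of $K_{\lambda,i}$ comes from a fixed genus-one pattern $R$ with Seifert matrix $\sbmatrix{0 & m+1\\ m & 0}$; the infecting knots (an iterated satellite family $J^i_{n-1}$ and a Whitehead double $D$) are tied into the two bands of the Seifert surface and have nothing to do with $\lambda$. The Alexander polynomial is that of the pattern, and topological sliceness comes from $D$ being topologically slice. Second and more importantly, your proposed mechanism for the between-parts separation --- a congruence constraint on $|H_1(\Sigma_p(L))|$ for $L$ with $\Delta_L$ coprime to $\lambda$, restricting the achievable $d$-invariants --- is not what the paper does and does not obviously work; $d$-invariants of $\Sigma_p(L)$ are not controlled by $|H_1(\Sigma_p(L))|$ alone. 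In the paper, the separation from an arbitrary $L$ with $\gcd(\Delta_L,\lambda)=1$ is achieved entirely in the $L^2$-signature half of the argument, via the localized mixed-coefficient commutator series $\cP^k$ defined using $\Q[t^{\pm1}]\Sigma^{-1}$ with $\Sigma$ the polynomials coprime to $\lambda\lambda^*$. The point is that $H_1(M(L);\Q[t^{\pm1}]\Sigma^{-1})=0$ by coprimality, which forces $\pi_1(Z^0_L)^{(i)}$ to land one level deeper in $\cP^{i+1}\pi_1(X)$ (Assertion~\ref{assertion:triviality-Z_0^L}), so that the amenable signature theorem applies at a lower solvability grade and kills $\osigmat_\Gamma(Z^0_L)$. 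The $d$-invariant argument in Section~\ref{section:d-invariant-estimate} handles the other metabolizer case $P=\langle\alpha_J\rangle$ and does not carry the burden of separating $L$ from $K_1$; by that point the Cochran--Harvey--Horn trick has replaced $K_1=R(J^1_{n-1},D)$ with the single knot $K_0=R(U,D)$. Without the localization step your sketch offers no working mechanism for the new difficulty this theorem poses beyond the independence result of Cha--Kim.
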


From Theorem~\ref{theorem:main-top-slice} it follows that each of the primary
parts $\cT_\lambda /\Delta$ and $\cT/\cT^\lambda$ has a subgroup isomorphic to
$\Z^\infty$ for all~$\lambda\in\Lambda$.  An immediate
consequence is the main result of \cite{Hedden-Livingston-Ruberman:2010-1} that
$\cT/\Delta$ has a subgroup isomorphic to~$\Z^\infty$.

Note that
Theorem~\ref{theorem:main-top-slice}\ref{item:nontrivial-primary-parts}
implies the following.

\begin{theoremalpha}
  \label{theorem:main-independence-top-slice}
  For each $\lambda\in \Lambda$, there is an infinite collection of
  topologically slice knots $\{K_{\lambda,i}\}_{i=1}^\infty$ with
  $\Delta_{K_{\lambda,i}}$ a power of $\lambda\lambda^*$, such that any nontrivial
  linear combination of the $K_{\lambda,i}$ is not smoothly concordant to any
  knot $J$ with $\Delta_J$ relatively prime to~$\lambda$.
\end{theoremalpha}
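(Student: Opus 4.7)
The plan is to derive Theorem~\ref{theorem:main-independence-top-slice} directly from Theorem~\ref{theorem:main-top-slice}\ref{item:nontrivial-primary-parts}, as the introduction already indicates. Fix $\lambda\in\Lambda$ and pick representatives $K_{\lambda,i}$ of a $\Z$-basis of $\cS_\lambda/\Delta\cong\Z^\infty$. Since $\cS_\lambda\subseteq\cT_\lambda$, each $\Delta_{K_{\lambda,i}}$ is (an associate of) a power of $\lambda\lambda^*$; and a nontrivial combination $\#_i a_i K_{\lambda,i}$ being concordant to a knot $J$ with $\gcd(\Delta_J,\lambda)=1$ would place $[\#_i a_i K_{\lambda,i}]$ in $\cT^\lambda\cap\cS=\cS^\lambda$, contradicting the injectivity of the composition $\cS_\lambda/\Delta\hookrightarrow\cS/\Delta\twoheadrightarrow\cS/\cS^\lambda$ asserted in Theorem~\ref{theorem:main-top-slice}. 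Thus the entire content of Theorem~\ref{theorem:main-independence-top-slice} is packaged into constructing the subgroup $\cS$ and verifying that one map in the diagram~\eqref{equation:subgroup-diagram}.

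For the construction I would, for each $m$ with $\lambda=(m+1)t-m$, build $K_{\lambda,i}$ by iterated satellite/infection on a topologically slice seed (with trivial Alexander polynomial) along curves dual to a free basis of the Alexander module, infecting with twist-type knots whose Alexander polynomial is an associate of $\lambda\lambda^*$. The infection construction preserves topological sliceness (via Casson--Gordon/COT-style sliceness of the infecting knots and triviality of the winding numbers) and multiplies Alexander polynomials by powers of $\lambda\lambda^*$; iterating and varying the seeds and the infection data across $i$ yields infinitely many candidates with $\Delta_{K_{\lambda,i}}$ a power of $\lambda\lambda^*$. The varying index $m\in\Lambda$ supplies the second family of independent polynomials.

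The obstruction package would combine two ingredients. First, amenable $L^2$-signatures (applied to cobordisms into 4-manifolds bounded by the zero-surgeries on $\#_i a_i K_{\lambda,i}\#(-J)$) give higher-order signature defects that are sensitive to the metabelian/solvable Alexander data; these behave well under the hypothesis $\gcd(\Delta_J,\lambda)=1$, because the relevant homology of the cyclic covers of $J$ has no $\lambda$-torsion and so the pertinent characters are forced to extend over a half-basis, producing vanishing. Second, smooth refinements come from Ozsv\'ath--Szab\'o $d$-invariants of prime-power cyclic branched covers, computed on Seifert pieces by N\'emethi's formula, and arranged to detect the $K_{\lambda,i}$ precisely because the branched covers built from the twist-knot infections are Seifert-fibered (or rational homology cobordant to such). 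The combination of these two obstruction types is what promotes topological evidence into the smooth category $\cT$.

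The main obstacle, as the author emphasizes, is the simultaneous control of both invariants for \emph{infinite} combinations: showing that for every nonzero $(a_i)$ some obstruction is nonzero while all obstructions vanish on knots $J$ with $\Delta_J$ coprime to $\lambda$. The standard trick is to stratify by deeper and deeper cyclic covers (of $p^k$-power order for a well-chosen prime $p$ depending on $\lambda$), engineering the $i$-th knot so that its characteristic obstruction first appears at level $i$ and dominates the contributions of $K_{\lambda,j}$ with $j<i$. Executing this cleanly requires an additivity/vanishing lemma for $d$-invariants under the infection satellite operation compatible with the Alexander-polynomial hypothesis on $J$, together with a triangular-matrix-style linear independence argument over the obstruction spectrum; this is where I expect most of the technical work to lie.
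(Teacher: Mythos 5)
Your opening reduction of Theorem~\ref{theorem:main-independence-top-slice} to Theorem~\ref{theorem:main-top-slice}\ref{item:nontrivial-primary-parts} is formally valid, but it is circular in context: in the paper the subgroup $\cS$ of Theorem~\ref{theorem:main-top-slice} is \emph{defined} as the subgroup generated by $\Delta$ and the very knots $K_{\lambda,i}$ produced for Theorem~\ref{theorem:main-independence-top-slice}, so the two statements are equivalent by a short formal argument and neither is logically prior. The paper in fact proves neither directly: it proves the bipolar-filtration version (Theorem~\ref{theorem:main-independence-bipolar}), and observes that this implies Theorem~\ref{theorem:main-independence-top-slice}. Your plan still leaves the entire nontrivial content to the construction and the obstruction, so the reduction buys nothing.

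The construction sketch has a concrete error. You propose a seed with \emph{trivial} Alexander polynomial and infections by winding-number-zero satellite curves ``dual to a free basis of the Alexander module'' using twist knots with $\Delta$ a power of $\lambda\lambda^*$. But a topologically slice knot with trivial Alexander polynomial has trivial rational Alexander module, so there are no such curves; and winding-number-zero infection does not change the Alexander polynomial at all, so the resulting knots would still have trivial $\Delta$, placing them in $\Delta\subset\cT$ rather than giving $\Delta_{K_{\lambda,i}}$ a nontrivial power of $\lambda\lambda^*$. The paper does the opposite: the seed $R(J,D)$ of Figure~\ref{figure:R(J,D)} already has $\Delta=\lambda(t)\lambda(t^{-1})$ built into the Seifert form (the twisted band pattern with Seifert matrix $\sbmatrix{0 & m+1 \\ m & 0}$), and the infections along $\alpha_J$ and $\alpha_D$ have winding number zero and leave $\Delta$ unchanged. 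Topological sliceness comes from $D=\Wh^+(T)$ being topologically slice and $R(J,U)$ being smoothly slice, not from any Casson--Gordon sliceness criterion for the infecting knots.

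The obstruction package is also not developed to the point of a proof, and the mechanism you describe for exploiting $\gcd(\Delta_J,\lambda)=1$ is not the one that works here. You invoke a Casson--Gordon-style ``characters forced to extend over a half-basis'' argument, but the paper's key technical device is a \emph{localized mixed-type commutator series} $\cP^i$ (Definition~\ref{definition:localized-commutator-series}), where the $\cP^2$ stage localizes at the multiplicative set $\Sigma$ of polynomials coprime to $\lambda\lambda^*$ and quotients by the image of $H_1(Z_L^0;\Q[t^{\pm1}]\Sigma^{-1})$. The coprimality hypothesis on $\Delta_L$ enters via $H_1(M(L);\Q[t^{\pm1}]\Sigma^{-1})=0$, which is what lets Assertion~\ref{assertion:triviality-Z_0^L} push $\pi_1(Z_L^0)^{(n)}$ into $\cP^{n+1}\pi_1(X)$ and so kills the $\rho$-invariant contribution of $L$. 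This is absent from your sketch. Finally, your proposed linear-independence mechanism---stratifying by deeper prime-power cyclic covers, arranging the $i$-th obstruction to ``first appear at level~$i$''---is not what the paper uses: linear independence across $i$ comes from using a \emph{different prime $p_i$} for each $i$ in the final $\Z_{p_i}$-coefficient stage of the commutator series, together with property~\ref{item:independence-of-signature-of-J_0} making $\rho(J^j_0,\Z_{p_i})=0$ for $i<j$. The $d$-invariant/N\'emethi computation plays a different structural role (ruling out the metabolizer $P=\langle\alpha_J\rangle$), not an ``appears at level~$i$'' independence argument.
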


Indeed, Theorem~\ref{theorem:main-independence-top-slice} is equivalent to
Theorem~\ref{theorem:main-top-slice}, by an elementary formal argument.

\begin{proof}
  [Proof that Theorem~\ref{theorem:main-independence-top-slice} implies
  Theorem~\ref{theorem:main-top-slice}]

  Suppose that Theorem~\ref{theorem:main-independence-top-slice} holds.
  Let $\cS$ be the subgroup in $\cT$
  which is generated by $\Delta$ and the two-parameter family
  $\{K_{\lambda,i}\}_{\lambda\in\Lambda, i\in \bN}$ given by
  Theorem~\ref{theorem:main-independence-top-slice}.
  
  We claim that, for each $\lambda\in \Lambda$, $\cS_\lambda$ is equal to the
  subgroup generated by $\Delta$ and the one-parameter family
  $\{K_{\lambda,i}\}_{i\in\bN}$. To see this, first observe that for
  irreducibles $\lambda$ and $\mu$ which are not $*$-associates,
  $\cS_\lambda\subset \cS^\mu$ and so the composition
  \begin{equation}
    \label{item:distinct-primary-composition}
    \cS_\lambda/\Delta \hookrightarrow \cS/\Delta \twoheadrightarrow \cS/\cS^\mu    
  \end{equation}
  is zero. If a linear combination $[J] + \sum_\mu \sum_i r_{\mu,i} [K_{\mu,i}]$
  ($[J]\in \Delta$, $r_{\mu,i} \in \Z$) lies in $\cS_\lambda$, then for each
  $\mu\ne \lambda$, the image of the linear combination in $\cS/\cS^\mu$, which
  is represented by $\sum_{i} r_{\mu,i} [K_{\mu,i}]$, should be zero by the
  observation. Therefore $r_{\mu,i}=0$ for all $i$ and $\mu\ne\lambda$, by the
  conclusion of Theorem~\ref{theorem:main-independence-top-slice} that the
  classes $[K_{\mu,i}]$ are linearly independent in $\cS/\cS^\mu$. This proves
  the claim.

  Fix $\lambda\in \Lambda$, and temporarily denote by $\Z^\infty$ the free
  abelian group generated by the collection~$\{K_{\lambda,i}\}_i$.  The
  assignment $K_{\lambda,i} \mapsto [K_{\lambda,i}]$ gives rise to an
  epimorphism $\Z^\infty \twoheadrightarrow \cS_\lambda/\Delta$ by the claim.
  Since $\cS$ is generated by $\{K_{\lambda,i}\}_{\lambda,i}$ and
  $[K_{\mu,i}]=0$ in $\cS/\cS^\lambda$ for $\mu\ne\lambda$, the composition
  \begin{equation}
    \label{equation:same-primary-composition}
    \Z^\infty \twoheadrightarrow \cS_\lambda/\Delta \hookrightarrow \cS/\Delta \twoheadrightarrow \cS/\cS^\lambda   
  \end{equation}
  is surjective. Moreover, by the linear independence of $[K_{\lambda,i}]$ in
  $\cS/\cS^\lambda$, \eqref{equation:same-primary-composition}~is an
  isomorphism.  From this, it follows that both $\Z^\infty \to
  \cS_\lambda/\Delta$ and $\cS_\lambda/\Delta \to \cS/\cS^\lambda$ are
  isomorphisms.  This shows that
  Theorem~\ref{theorem:main-top-slice}\ref{item:nontrivial-primary-parts}
  holds.

  Since \eqref{item:distinct-primary-composition} is zero for $\mu\ne\lambda\in
  \Lambda$, the composition of two horizontal arrows on the top row
  of~\eqref{equation:subgroup-diagram} is the direct sum of the isomorphisms
  $\cS_\lambda/\Delta \to \cS/\cS^\lambda$. So the top row composition in
  \eqref{equation:subgroup-diagram} is an isomorphism. Also, the first
  horizontal arrow in the top row is surjective by the definition of~$\cS$. From
  this, it follows that
  Theorem~\ref{theorem:main-top-slice}\ref{item:left-primary-decomposablity}
  and~\ref{item:right-primary-decomposablity} hold.
\end{proof}

\subsection{Primary decomposition for the bipolar filtration}
\label{subsection:pd-bipolar}

The method of this paper provides further information on the structure of~$\cT$.
To discuss this, we consider the \emph{bipolar filtration} of~$\cT$, which was
defined by Cochran, Harvey and Horn~~\cite{Cochran-Harvey-Horn:2012-1}.  It is a
descending filtration
\[
  \{0\} \subset \cdots \subset \cT_n \subset \cdots \subset \cT_1
  \subset \cT_0 \subset \cT,
\]
where the subgroup $\cT_n$ consists of concordance classes of certain knots
called \emph{$n$-bipolar} (see
Definition~\ref{definition:positivity-negativity}).  It is known that various
modern smooth invariants vanish for knots in the subgroups~$\cT_0$ and~$\cT_1$,
but the associated graded groups $\gr_n(\cT):=\cT_n/\cT_{n+1}$ are nontrivial
for all~$n$~\cites{Cochran-Harvey-Horn:2012-1,Cha-Kim:2017-1}. Indeed, the
abelian group $\gr_n(\cT)$ is known to have infinite rank for
$n=0$~\cite{Cochran-Horn:2012-1}, and for all $n\ge 2$~\cite{Cha-Kim:2017-1}.

As an attempt to understand the structure of the filtration, we formulate and
study the primary decomposition of $\gr_n(\cT)$.  For an $n$-bipolar knot $K$,
denote its class in $\gr_n(\cT)$ by~$[K]$. Similarly to the case of~$\cT$, for
an irreducible element $\lambda$ in $\Q[t^{\pm1}]$, consider the following
subgroups of~$\gr_n(\cT)$.
\begin{align*}
  \gr_n(\cT)_\lambda &:=\{[K] \in \gr_n(\cT) \mid
  \Delta_K = (\lambda\lambda^*)^k \text{ for some } k \ge 0\},
  \\
  \gr_n(\cT)^\lambda &:= \{[K] \in \gr_n(\cT) \mid \Delta_K
  \text{ is relatively prime to }\lambda\}.
\end{align*}
Also, let $\gr_n(\Delta) =\{[K] \in \gr_n(\cT) \mid \Delta_K \text{\space is
trivial}\}$.
The injections $\gr_n(\cT)_\lambda/\gr_n(\Delta) \hookrightarrow
\gr_n(\cT)/\gr_n(\Delta)$ and the surjections $\gr_n(\cT)/\gr_n(\Delta)
\twoheadrightarrow
\gr_n(\cT)/\gr_n(\cT)^\lambda$ induce homomorphisms
\begin{align*}
  \Phi^n_L\colon & \bigoplus\nolimits_{[\lambda]}
  \gr_n(\cT)_\lambda/\gr_n(\Delta) \to \gr_n(\cT)/\gr_n(\Delta),
  \\
  \Phi^n_R\colon & \gr_n(\cT)/\gr_n(\Delta)
  \to \bigoplus\nolimits_{[\lambda]} \gr_n(\cT)/\gr_n(\cT)^\lambda.
\end{align*}
The following is an analogue of Question~\ref{question:pd-top-slice}.

\begin{question}
  [Primary decomposition for the associated graded]
  \phantomsection\leavevmode\Nopagebreak
  \label{question:pd-bipolar}
  \begin{enumerate}[label=\textup{(\arabic*)}]
    \item
    Is $\Phi^n_L$ an isomorphism?
    \item
    Is $\Phi^n_R$ an isomorphism?
    \item
    Are $\gr_n(\cT)_\lambda/\gr_n(\Delta)$ and $\gr_n(\cT)/\gr_n(\cT)^\lambda$
    nontrivial for every irreducible $\lambda$ that arises as a factor of an Alexander polynomial of a knot?
    \item
    Is the following composition an isomorphism?
    \[
      \gr_n(\cT)_\lambda/\gr_n(\Delta) \hookrightarrow
      \gr_n(\cT)/\gr_n(\Delta) \twoheadrightarrow \gr_n(\cT)/\gr_n(\cT)^\lambda
    \]
  \end{enumerate}
\end{question}

The following result supports affirmative answers.  Similarly to the case of
$\cT$, for a subgroup $\cS$ in $\gr_n(\cT)$, let
$\cS_\lambda=\cS\cap\gr_n(\cT)_\lambda$ and
$\cS^\lambda=\cS\cap\gr_n(\cT)^\lambda$. Recall that the collection $\Lambda$ has been defined in \eqref{equation:collection-Lambda} above.

\begin{theoremalpha}
  \label{theorem:main-bipolar}
  Let $n\ge 2$.  Then there is a subgroup $\cS$ in $\gr_n(\cT)$ which contains
  $\gr_n(\Delta)$ such that
  \[
    \bigoplus_{\lambda\in\Lambda} \cS_\lambda/\gr_n(\Delta)
    \xrightarrow{\cong} \cS/\gr_n(\Delta)
    \xrightarrow{\cong} \bigoplus_{\lambda\in\Lambda} \cS / \cS^\lambda
  \]
  and $\cS_\lambda/\gr_n(\Delta) \cong \Z^\infty \cong \cS /
  \cS^\lambda$ for every $\lambda\in\Lambda$.
\end{theoremalpha}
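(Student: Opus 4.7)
The plan is to reduce Theorem~\ref{theorem:main-bipolar} to a bipolar analogue of Theorem~\ref{theorem:main-independence-top-slice}, and then to realize that independence statement by an iterated-infection enhancement of the construction underlying Theorem~\ref{theorem:main-top-slice}.

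\textbf{Reduction.} I would state the following bipolar independence statement: for every $n\ge 2$ and every $\lambda\in\Lambda$, there is an infinite collection $\{K_{\lambda,i}\}_{i=1}^\infty$ of $n$-bipolar knots with $\Delta_{K_{\lambda,i}}$ a power of $\lambda\lambda^*$, such that no nontrivial linear combination $\sum_i r_i K_{\lambda,i}$ is smoothly concordant to $J\# L$ with $J\in\cT_{n+1}$ and $\Delta_L$ coprime to~$\lambda$; equivalently, the classes $[K_{\lambda,i}]\in\gr_n(\cT)/\gr_n(\cT)^\lambda$ are $\Z$-linearly independent. Granting this, the formal argument written immediately after Theorem~\ref{theorem:main-independence-top-slice} transfers verbatim with $\cT$ replaced by $\gr_n(\cT)$ and $\Delta$ by $\gr_n(\Delta)$: if $\lambda,\mu\in\Lambda$ are distinct then their $\lambda\lambda^*$ polynomials are coprime, so $\cS_\lambda\subset \cS^\mu$ and the composition $\cS_\lambda/\gr_n(\Delta) \hookrightarrow \cS/\gr_n(\Delta) \twoheadrightarrow \cS/\cS^\mu$ vanishes; chasing this zero composition extracts the advertised isomorphisms and identifies every primary piece with~$\Z^\infty$.

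\textbf{Construction of the $K_{\lambda,i}$.} The seeds are the topologically slice knots built for Theorem~\ref{theorem:main-top-slice}, whose Alexander polynomials are already powers of $\lambda\lambda^*$ and whose nontriviality modulo $\Delta$ is detected by amenable $L^2$-signatures combined with Ozsv\'ath--Szab\'o $d$-invariants of branched covers, evaluated via N\'emethi's description of $HF^+$ of the resulting Seifert-fibered homology spheres. To promote these seeds into $n$-bipolar representatives while preserving the Alexander polynomial, I would perform an $n$-fold iterated infection along a sequence of axes lying in a sufficiently deep term of the derived series of the infection host. Membership in $\cT_n$ then follows from the standard bipolarity-under-infection inheritance lemma of Cochran--Harvey--Horn~\cite{Cochran-Harvey-Horn:2012-1} and~\cite{Cha-Kim:2017-1}; coprimality of the Alexander polynomial with other elements of $\Lambda$ is automatic since infection along null-homologous curves does not alter $\Delta_K$.

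\textbf{Main obstacle.} The technical heart is the simultaneous achievement of (i) nontriviality in $\gr_n(\cT)=\cT_n/\cT_{n+1}$ and (ii) insensitivity to connected sums with knots coprime to~$\lambda$. For (ii), I would assemble the obstructing invariant as an amenable $L^2$-signature pulled back along a character that factors through the $\lambda$-primary part of the rational Alexander module; any $L$ with $\Delta_L$ coprime to $\lambda$ has vanishing $\lambda$-primary part, so such summands contribute zero. For (i), this invariant must vanish on $\cT_{n+1}$ but remain nonzero on nontrivial combinations of the~$K_{\lambda,i}$; my plan is to combine the depth-$n$ amenable-signature vanishing theorem for bipolar cobordisms of~\cite{Cha-Kim:2017-1} with a second layer of obstruction coming from $d$-invariants of an infinite family of prime-power branched covers of the infection pattern. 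Linear independence across~$i$ and across $\lambda\in\Lambda$ is then arranged by varying a genus parameter in the pattern so that the resulting Seifert invariants, and hence the $d$-invariant jumps, are pairwise distinct, producing $\Z^\infty$ in every $\lambda$-primary part and completing the subgroup $\cS$ with the isomorphisms asserted by the theorem.
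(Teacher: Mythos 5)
Your reduction step is correct and is exactly the paper's: Theorem~\ref{theorem:main-bipolar} is derived from a bipolar independence statement (Theorem~\ref{theorem:main-independence-bipolar}) by the same formal chase used to pass from Theorem~\ref{theorem:main-independence-top-slice} to Theorem~\ref{theorem:main-top-slice}, using that $\cS_\lambda\subset\cS^\mu$ when $\lambda,\mu\in\Lambda$ are distinct.  The global strategy you name --- amenable $L^2$-signatures combined with $d$-invariants of branched covers, with the knots $K_{\lambda,i}$ built by iterated infection from a genus-one base $R(\,\cdot\,,D)$ so that $\Delta_{K_{\lambda,i}}\doteq\lambda\lambda^*$ is preserved --- is also the paper's.  (One small misconception: the paper does not first build ``Theorem~\ref{theorem:main-top-slice} seeds'' and then promote them; the $K_i=R(J^i_{n-1},D)$ are constructed once, directly $n$-bipolar, and Theorems~\ref{theorem:main-top-slice},~\ref{theorem:main-independence-top-slice},~\ref{theorem:main-bipolar} all flow from that single family.)  But two essential mechanisms in your sketch are wrong or underdeveloped.

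First, your proposed way of neutralizing the unknown summand $L$ with $\gcd(\Delta_L,\lambda)=1$ --- ``a character that factors through the $\lambda$-primary part of the rational Alexander module, so such $L$ contribute zero'' --- does not address the real obstacle.  The difficulty is not at the level of an abelian character: one must show that the $L^2$-signature defect of the negaton $Z_L^0$ bounded by $M(L)$ vanishes under the fixed representation $\pi_1(X)\to\Gamma=\pi_1(X)/\cP^{n+1}\pi_1(X)$, but $Z_L^0$ is only an \emph{integral $n$-solution} (since $L$ is only $n$-bipolar, not $(n+1)$-bipolar), so the Amenable Signature Theorem cannot be invoked at depth $n+1$ for this piece.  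The paper handles this by introducing the localized mixed-type commutator series $\cP^i$ (Definition~\ref{definition:localized-commutator-series}), whose $\cP^2$-step simultaneously localizes $\Q[t^{\pm1}]$ at the multiplicative set $\Sigma$ of polynomials coprime to $\lambda\lambda^*$ \emph{and} quotients by $\Im H_1(Z_L^0;\Q[t^{\pm1}]\Sigma^{-1})$.  This is what makes Assertion~\ref{assertion:triviality-Z_0^L} go through: $\pi_1(Z_L^0)^{(i)}$ maps into $\cP^{i+1}\pi_1(X)$, so the image of $\pi_1(Z_L^0)$ in $\Gamma$ has trivial $n$-th derived subgroup, and the Amenable Signature Theorem then applies at depth $n$ to kill $\osigmat_\Gamma(Z_L^0)$.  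Without this two-pronged modification of $\cP^2$, your plan stalls exactly at the point where $L$'s negaton must be shown to contribute nothing.

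Second, your mechanism for linear independence in~$i$ --- ``vary a genus parameter in the pattern so the Seifert invariants, hence $d$-invariant jumps, are pairwise distinct'' --- is not what the paper does and would not work as stated.  Varying genus generically changes the Alexander polynomial, destroying the constraint that $\Delta_{K_{\lambda,i}}$ be a power of $\lambda\lambda^*$; and $d$-invariants do not produce linear independence in the concordance group in any direct way since they are not homomorphisms of the relevant groups.  In the paper, the $d$-invariant argument (Section~\ref{section:d-invariant-estimate}) serves only to rule out the metabolizer case $P=\langle\alpha_J\rangle$, and the estimate is identical for every~$i$ (it concerns $R(U,D)$ and depends only on $m$).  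Linear independence across~$i$ is obtained entirely in the other case $P=\langle\alpha_D\rangle$ from the Cheeger--Gromov $\rho$-invariants, via the orthogonality property~\ref{item:independence-of-signature-of-J_0} with a sequence of primes~$p_i$, so that $\osigmat_\Gamma(Z^0_{i,j})$ vanishes for $i>1$ (see~\eqref{equation:L2-signature-Z^0_i,j}) and the single surviving term $\rho(J^1_0,\Z_{p_1})$ overwhelms the bounded contributions from the remaining pieces.  Your sketch has no analogue of this $L^2$-signature orthogonality and therefore no working route to $\Z^\infty$ inside a single $\lambda$-primary part.
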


By the same argument as the proof that Theorem~\ref{theorem:main-top-slice} is
equivalent to Theorem~\ref{theorem:main-independence-top-slice}, it is seen
that Theorem~\ref{theorem:main-bipolar} is equivalent to the following
statement:

\begin{theoremalpha}
  \label{theorem:main-independence-bipolar}
  Let $n\ge 2$.  Then for each $\lambda\in \Lambda$, there are infinitely many
  topologically slice $n$-bipolar knots $K_{\lambda,i}$
  \textup{($i=1,2,\ldots$)} with $\Delta_{K_{\lambda,i}}$ a power of
  $\lambda\lambda^*$, which are linearly independent
  in~$\gr_n(\cT)/\gr_n(\cT)^\lambda$.
\end{theoremalpha}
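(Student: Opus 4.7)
The plan is to adapt the construction and obstruction strategy from the proof of Theorem~\ref{theorem:main-independence-top-slice}, upgrading the resulting knots to lie in $\cT_n$ and strengthening the obstruction so it survives an $(n{+}1)$-bipolar bound rather than merely a slice disk. I assume the package of tools referred to in the introduction — amenable $L^2$-signatures on a suitable amenable quotient, $d$-invariants of prime power branched covers, and N\'emethi's formula for Seifert fibered manifolds — is already in place as it is for the proof of Theorem~\ref{theorem:main-independence-top-slice}.

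\textbf{Step 1 (construction).} For each $\lambda=(m+1)t-m\in\Lambda$, fix a genus one seed knot $R_\lambda$ with $\Delta_{R_\lambda}=\lambda\lambda^*$ whose zero-framed surgery is related by handle moves to a small Seifert fibered rational homology sphere to which N\'emethi's formula applies for every prime power cover. Choose winding-number-zero infection curves $\eta_1,\dots,\eta_r\subset S^3\setminus R_\lambda$ lying in a sufficiently deep term of the derived series of $\pi_1(S^3\setminus R_\lambda)$. For each $i\ge 1$, let $J_{i,1},\dots,J_{i,r}$ be topologically slice $n$-bipolar infection knots drawn from the infinite rank family of \cite{Cha-Kim:2017-1}, and set $K_{\lambda,i}$ to be the infection of $R_\lambda$ by the $J_{i,j}$ along the $\eta_j$. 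Since winding-zero infection preserves the Alexander polynomial, $\Delta_{K_{\lambda,i}}=\lambda\lambda^*$; topological sliceness of $R_\lambda$ and of each $J_{i,j}$ gives $K_{\lambda,i}\in\cT$; and the satellite formula for the bipolar filtration, applied to the null-homotopic $\eta_j$ inside positons and negatons for $R_\lambda$ and the $J_{i,j}$, places $K_{\lambda,i}\in\cT_n$.

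\textbf{Step 2 (obstruction).} Suppose $\sum_i a_i[K_{\lambda,i}]=0$ in $\gr_n(\cT)/\gr_n(\cT)^\lambda$ for some integers $a_i$ not all zero. Then there exists an $n$-bipolar knot $J$ with $\gcd(\Delta_J,\lambda)=1$ such that a connected sum representative $K$ of $\sum_i a_i[K_{\lambda,i}]-[J]$ lies in $\cT_{n+1}$. Fix an $(n{+}1)$-positon $W$ bounded by the zero-surgery $M_K$, and evaluate on $W$ the combined amenable $L^2$-signature and $d$-invariant obstruction using a metabelian representation of $\pi_1(W)$ that factors through the $\lambda$-primary summand of the rational Alexander module. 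Coprimality $\gcd(\Delta_J,\lambda)=1$ makes the $\lambda$-primary part of the Alexander module of $J$ vanish, so $J$ contributes nothing to this invariant. The contribution of the $K_{\lambda,i}$ piece is a linear functional in the $a_i$ of the $L^2$-signature and $d$-invariant data of the $J_{i,j}$, and a generic choice of the $J_{i,j}$ from the infinite rank family makes this functional nondegenerate, contradicting $K\in\cT_{n+1}$.

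\textbf{Main obstacle.} The delicate point is that an $(n{+}1)$-positon $W$ has a much richer $H_2$ than a slice disk exterior, so the essential cap surfaces witnessing positivity and negativity can a priori feed both the $L^2$-signature defect and the $d$-invariant. The heart of the proof must be to bound these cap contributions: on the topological side, by verifying that the cap surfaces lift trivially to the amenable cover on which the $L^2$-signature lives, and on the smooth side, by using the definiteness of the caps together with N\'emethi's formula to bound the $d$-invariant defect, in the spirit of \cite{Cha-Kim:2017-1}. Simultaneously, the infection curves $\eta_j$ must remain in an appropriate term of the derived series of $\pi_1(W)$; this is precisely what forces the hypothesis $n\ge 2$, since only at that depth of the bipolar filtration do the derived quotients retain enough of the Alexander-module information to detect the $J_{i,j}$ at the $(n{+}1)$-st stage. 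Once both compatibilities are secured, the desired linear independence follows.
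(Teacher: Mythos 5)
Your Step 1 construction is not the paper's, and as written it likely does not produce knots with the required properties. You propose to infect a seed $R_\lambda$ by topologically slice $n$-bipolar knots $J_{i,j}$ along several deep curves. In the paper's construction (Section~2.1), the infection knots $J^i_0$ at the bottom of the iterated-satellite tower are emphatically \emph{not} topologically slice: condition (J2) requires them to have enormous averaged Levine--Tristram signatures, which would vanish for a topologically slice knot. The $n$-bipolarity of $K_i=R(J^i_{n-1},D)$ comes from the \emph{depth} of the stevedore iteration (Lemma~2.3 of~\cite{Cha-Kim:2017-1}), not from the base infection knots being $n$-bipolar. Moreover, the topological sliceness of $K_i$ is obtained by a specific feature of the two-arm knot $R(J,D)$: one arm carries the topologically slice $D=\Wh^+(T)$ and $R(J,U)$ is honestly slice for every $J$. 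Nothing in your construction forces $R_\lambda$ itself to be topologically slice, so your $K_{\lambda,i}$ need not lie in $\cT$ at all. And if the $J_{i,j}$ are themselves topologically slice, they are invisible to the $L^2$-signature apparatus that your Step~2 needs.

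Your Step 2 also skips the central structural move of the paper: the dichotomy on the Blanchfield metabolizer. After building an $n$-negaton $X^-$ for a single summand $K_1$, the paper extracts the metabolizer $P\subset H_1(M(K_1);\Q[t^{\pm1}])$ of the Blanchfield pairing, and — because the genus-one seed has exactly two metabolizers $\langle\alpha_J\rangle$ and $\langle\alpha_D\rangle$ — splits into two cases, applying the $L^2$-signature argument (Section~3) when $P=\langle\alpha_D\rangle$ and the $d$-invariant/N\'emethi argument (Section~4) when $P=\langle\alpha_J\rangle$. These are not ``combined'' into one obstruction; neither suffices alone. You also describe the relevant representation as ``metabelian,'' but the paper uses a localized mixed-type commutator series $\cP^i$ of depth $n+1$ (Definition~3.1), which is far beyond metabelian and is precisely what is needed to isolate the bottom-level $J^1_0$ inside the tower. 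Finally, the genuinely new difficulty here — handling the unknown knot $L$ with $\gcd(\Delta_L,\lambda)=1$ — is not handled by saying ``the $\lambda$-primary part of its Alexander module vanishes.'' The paper instead quotients by $\Im H_1(Z_L^0;\Q[t^{\pm1}]\Sigma^{-1})$ at stage~2 of $\cP^\bullet$ (so that $\pi_1(Z_L^0)^{(i)}$ lands in $\cP^{i+1}\pi_1(X)$, Assertion~C), and then applies the Amenable Signature Theorem one level lower to $Z_L^0$ to kill its $L^2$-signature defect. Without that mechanism the term $\osigmat_\Gamma(Z^0_L)$ in~\eqref{equation:L2-signature-of-X} is uncontrolled and the contradiction fails.
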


Also, Theorem~\ref{theorem:main-independence-bipolar} implies
Theorem~\ref{theorem:main-independence-top-slice}\@.  Indeed, if a linear
combination $\#_i\, a_i K_{\lambda,i}$ of the knots $K_{\lambda,i}$ in
Theorem~\ref{theorem:main-independence-bipolar} is smoothly concordant to a knot
$L$ with $\Delta_L$ relatively prime to $\lambda$, then the class $[L]$
automatically lies in the subgroup $\cT_n$ of $\cT$ since so are
$K_{\lambda,i}$, and thus $[L]$ is zero in the quotient
$\gr_n(\cT)/\gr_n(\cT)^\lambda$.  It follows that $a_i=0$ for all~$i$, by
Theorem~\ref{theorem:main-independence-bipolar}\@. Therefore, to obtain
Theorems~\ref{theorem:main-top-slice}, \ref{theorem:main-independence-top-slice}
and~\ref{theorem:main-bipolar}, it suffices to prove
Theorem~\ref{theorem:main-independence-bipolar}\@.

The remaining part of this paper is organized as follows.
Sections~\ref{section:setup-proof}--\ref{section:d-invariant-estimate} are
devoted to the proof of Theorem~\ref{theorem:main-independence-bipolar}\@.  In
the appendix, we discuss a general formulation of the notion of primary
decomposition.

\subsubsection*{Ingredients of the proof}

The proof of the above results uses (the ideas of) several results in the
literature.  To extract obstructions to smooth concordance, we combine the
Cheeger-Gromov $L^2$ $\rho$-invariants, or equivalently $L^2$-signature defects,
and the Ozsv\'ath-Szab\'o $d$-invariant defined from Heegaard Floer homology,
following the approach of~\cite{Cha-Kim:2017-1}, which was motivated by
earlier work of Cochran, Harvey and Horn~\cite{Cochran-Harvey-Horn:2012-1}.  The
amenable signature theorem developed in~\cites{Cha-Orr:2009-1,Cha:2010-1} and
Ozsv\'ath-Szab\'o's $d$-invariant inequality for definite
4-manifolds~\cite{Ozsvath-Szabo:2003-2} are among the key ingredients.  We
develop and use a localization technique inspired by work of Cochran, Harvey and
Leidy~\cites{Cochran-Harvey-Leidy:2009-2, Cochran-Harvey-Leidy:2009-3}, to
produce representations of the fundamental group to which our $\rho$-invariants
are associated. Also, to compute and estimate $d$-invariants of infinitely many
branched covers of the infinitely many topologically slice knots in
Theorems~\ref{theorem:main-independence-top-slice}
and~\ref{theorem:main-independence-bipolar}, we use N\'emethi's
work~\cite{Nemethi:2005-1} on Heegaard Floer homology of negative definite
plumbed 3-manifolds.

\subsubsection*{Acknowledgements}

The author is indebted to Min Hoon Kim, Se-Goo Kim and Taehee Kim for
discussions which led him to prove the results described in the introduction.
The author thanks Chuck Livingston for his extremely helpful comments. Many
results in this paper were obtained during the author's visit to Max Planck
Institute for Mathematics in 2017--18.  The author is grateful to the institute
and Peter Teichner for the invitation and hospitality.  This work was also
partly supported by NRF grant 2019R1A3B2067839.  Finally the author thanks an anonymous referee for comments which were very helpful in improving the exposition.

\section{The first step of the proof of
Theorem~\ref{theorem:main-independence-bipolar}}
\label{section:setup-proof}

As a preliminary of the proof of
Theorem~\ref{theorem:main-independence-bipolar}, we recall the definition of the
bipolar filtration, from~\cite{Cochran-Harvey-Horn:2012-1}. Let $M(K)$ be the
zero framed surgery manifold of a knot $K$ in~$S^3$.

\begin{definition}[{\cite[Definition~5.1]{Cochran-Harvey-Horn:2012-1}}]
  \label{definition:positivity-negativity}
  Let $n\ge 0$ be an integer.  A compact connected 4-manifold $V$ bounded by
  $M(K)$ is an \emph{$n$-negaton} if the following are satisfied.
  \begin{enumerate}
  \item The inclusion induces an isomorphism on $H_1(M(K))\to H_1(V)$,
    and a meridian of $K$ normally generates~$\pi_1(V)$.
  \item There are disjointly embedded closed connected surfaces $S_i$ in $V$
    which form a basis for $H_2(V)$ and have self-intersection $-1$, or
    equivalently, the normal bundle of $S_i$ has Euler class~$-1$.
  \item For each $i$, the image of $\pi_1(S_i)$ lies in the $n$th derived
    subgroup~$\pi_1(V)^{(n)}$.
  \end{enumerate}
  If there is an $n$-negaton bounded by $M(K)$, then $K$ is called
  \emph{$n$-negative}.  An \emph{$n$-positon} and an \emph{$n$-positive knot}
  are defined by replacing $-1$ by~$+1$ in condition (2) above.
  A knot $K$ is \emph{$n$-bipolar} if $K$ is $n$-positive and $n$-negative.
\end{definition}

Recall that $\cT$ is the smooth concordance group of topologically slice knots.
The \emph{bipolar filtration} $\{\cT_n\}_{n\ge 0}$ of $\cT$ is defined by
\[
  \cT_n = \{[K] \in \cT \mid K\text{ is $n$-bipolar}\}.
\]
Since an $(n+1)$-bipolar knot is $n$-bipolar, $\{\cT_n\}_{n\ge 0}$ is a
descending filtration.  It is an open problem whether $\bigcap_{n\ge 0}
\cT_n=\{0\}$.

\subsection{Construction of a family of knots~\texorpdfstring{$\{K_i\}$}{Ki}}
\label{subsection:construction-K_i}

We start the proof of Theorem~\ref{theorem:main-independence-bipolar} with a
construction of knots which will be shown to generate the promised infinite rank
free abelian subgroup.

Fix an integer $n\ge 2$, as in
Theorem~\ref{theorem:main-independence-bipolar}\@.  Also, fix another integer
$m\ge 1$.
Several objects we will use below depend on~$(n,m)$, but we omit it from
notation since $(m,n)$ is fixed in our arguments. Let
\[
  \lambda(t) = \lambda_m(t) = (m+1)t-m \in \Lambda.
\]
For each $(n,m)$, we will construct an infinite family of knots $\{K_i\}$
indexed by integers~$i>0$, whose Alexander polynomial is
$\lambda(t)\lambda(t^{-1})$.  The construction is similar
to~\cite[Section~2.2]{Cha-Kim:2017-1}.  (See also
\cite{Cochran-Harvey-Horn:2012-1} which influenced the construction
of~\cite{Cha-Kim:2017-1} and ours.)  Let $R(J,D)$ be the knot shown in
Figure~\ref{figure:R(J,D)}.  Here $J$ and $D$ are knots that will be specified
later.  (For now, ignore the circles $\alpha_J$ and~$\alpha_D$.)  The knot
$R(J,D)$ bounds an obviously seen Seifert surface of genus one, which consists
of a 0-handle and two 1-handles.  In Figure~\ref{figure:R(J,D)}, the two
1-handles are untwisted and cross each other $2m+1$ times.  So, $S=\sbmatrix{0 &
m+1\\m & 0}$ is a Seifert matrix. We remark that \cite{Cha-Kim:2017-1}
and~\cite{Cochran-Harvey-Horn:2012-1} use the particular case of $m=1$.

\begin{figure}[H]
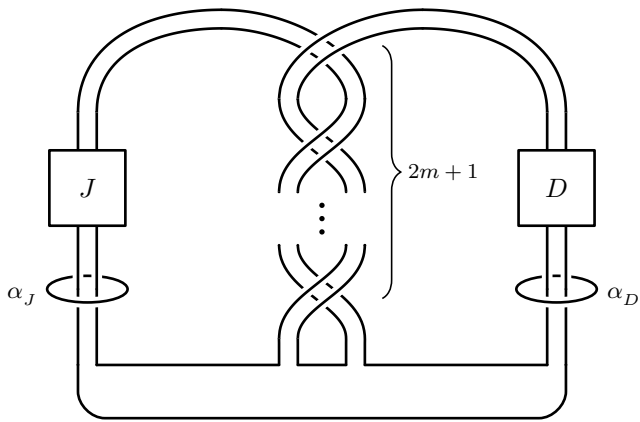

  \includestandalone{seed-knot}
  \caption{The knot $R(J,D)$.}
  \label{figure:R(J,D)}
\end{figure}

Since $tS-S^T$ presents the Alexander module of $R(J,D)$, a routine computation
shows that $R(J,D)$ has (integral) Alexander module
\begin{equation}
  \label{equation:integral-alexander-module}
  H_1(M(R(J,D));\Z[t^{\pm1}]) = \Z[t^{\pm1}]/\langle \lambda(t)\rangle
  \oplus \Z[t^{\pm1}]/\langle \lambda(t^{-1})\rangle,
\end{equation}
and the summands $\Z[t^{\pm1}]/\langle \lambda(t)\rangle$ and
$\Z[t^{\pm1}]/\langle \lambda(t^{-1})\rangle$ are equal to the subgroups
$\langle\alpha_J\rangle$ and $\langle\alpha_D\rangle$ generated by the loops
$\alpha_J$ and $\alpha_D$ shown in Figure~\ref{figure:R(J,D)}.  It follows that
$\Delta_{R(J,D)} = \lambda(t)\lambda(t^{-1})$.  Moreover, the Blanchfield
pairing
\[
  \Bl\colon \cA \times \cA
  \to \Q(t)/\Q[t^{\pm1}]
\]
on the Alexander module $\cA := H_1(M(R(J,D));\Q[t^{\pm1}])$ has exactly two
metabolizers, $\langle\alpha_J\rangle$ and~$\langle\alpha_D\rangle$.  Here, a
submodule $P\subset \cA$ is called a metabolizer if $P$ is equal to
$P^\perp:=\{x\in \cA \mid \Bl(P,x)=0\}$.  The above observations on $R(J,D)$
hold for any choice of $J$ and~$D$. 

As another ingredient of the construction of the promised knots $K_i$, we will
use a family of knots $\{J^i_k\}$ which was given in~\cite{Cha-Kim:2017-1}.  For
$k=0$, an explicit construction of such $\{J^i_0\}_i$ is given
in~\cite[Section~4]{Cha-Kim:2017-1}. We need that $\{J^i_0\}_i$ satisfies the
following conditions \ref{item:negativity-of-J_0},
\ref{item:signature-of-J_0-large-enough}
and~\ref{item:independence-of-signature-of-J_0} for some sequence of increasing
primes $\{p_i\}$.   For a knot $J$ and an integer $p$, let
$\sigma_J(\omega)\in \Z$ be the Levine-Tristram signature function of $J$ at
$\omega\in S^1\subset \C$, and let
\begin{equation}
  \label{equation:rho-J}
  \rho(J,\Z_d) = \frac1d \sum_{k=0}^{d-1}
  \sigma_J(e^{2\pi k\sqrt{-1}/d}).
\end{equation}

\begin{enumerate}[label=({J\arabic*})]
  \item\label{item:negativity-of-J_0} For each $i$, $J^i_0$ is
    $0$-negative.
  \item\label{item:signature-of-J_0-large-enough} For each $i$,
    $|\rho(J^i_0,\Z_{p_i})| > 69\,713\,280\cdot (6n+8m+86)$.
  \item\label{item:independence-of-signature-of-J_0} For $i<j$,
    $\rho(J^j_0,\Z_{p_i})=0$.
\end{enumerate}

For $k=0,\ldots,n-2$, $J^i_{k+1}$ is defined inductively by $J^i_{k+1} =
P_k(\eta_k,J^i_k)$, where $P_k(\eta_k,J^i_k)$ is the satellite knot shown in the
right of Figure~\ref{figure:stevedore-pattern}.  The left of
Figure~\ref{figure:stevedore-pattern} shows the pattern $P_k$ in the exterior of
an unknotted circle $\eta_k$, which is a standard solid torus.  The companion is
the knot~$J^i_k$.

\begin{figure}[H]
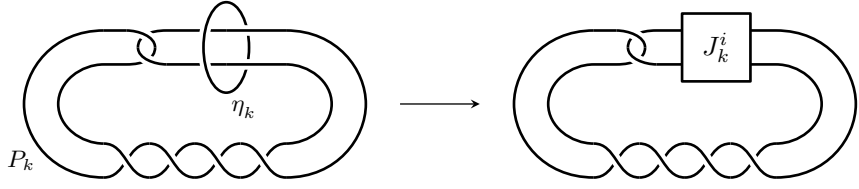

  \includestandalone{stevedore-pattern}
  \caption{The stevedore's pattern $(P_k,\eta_k)$ and the satellite knot
  $P_k(\eta_k,J^i_k)$.}
  \label{figure:stevedore-pattern}
\end{figure}

Now, let $K_i = R(J_{n-1}^i,D)$, where $D=\Wh^+(T)$ is the positive Whitehead
double of the right handed trefoil~$T$.  Since $D$ is topologically
slice~\cite{Freedman:1984-1}, $K_i$ is topologically concordant to
$R(J_{n-1}^i,U)$, where $U$ is the trivial knot.  Since $R(J,U)$ is (smoothly)
slice for any $J$, it follows that $K_i$ is topologically slice.
By~\cite[Section~2.2, Lemma~2.3]{Cha-Kim:2017-1},
property~\ref{item:negativity-of-J_0} implies each $K_i$ is $n$-negative, and
$k$-positive for all~$k\ge 0$.  Since $\Delta_{K_i} =
\lambda(t)\lambda(t^{-1})$, it follows that the class $[K_i]$ lies in
$\gr_n(\cT)_\lambda = (\cT_n\cap \cT_\lambda) / (\cT_{n+1}\cap \cT_\lambda)$.
Therefore, to prove Theorem~\ref{theorem:main-independence-bipolar}, it suffices
to show the following statement.

\begin{theorem}
  \label{theorem:linear-combination-non-negativity}
  Suppose $K=\bigl(\#_{i=1}^r \, a_i K_i\bigr) \# L$ \textup{(}$a_i\in
  \Z$\textup{)} is a linear combination of the knots~$K_i$ and a knot $L$ with
  $\Delta_L(t)$ relatively prime to~$\lambda(t)$.  If $a_i\ne 0$ for some $i$,
  then $K$ is not $(n+1)$-bipolar.
\end{theorem}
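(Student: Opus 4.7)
The plan is to argue by contradiction. Suppose $K$ is $(n+1)$-bipolar, so that $M(K)$ bounds an $(n+1)$-negaton $V$ and an $(n+1)$-positon. I will derive a contradiction by combining amenable $L^2$-signature obstructions with Heegaard Floer $d$-invariant constraints on branched covers, adapting the strategy of~\cite{Cha-Kim:2017-1} with one essential new ingredient: a localization at the $\lambda$-primary part of the Alexander module, inspired by \cite{Cochran-Harvey-Leidy:2009-2, Cochran-Harvey-Leidy:2009-3}. The starting observation is that, since the Alexander module of a connected sum is the direct sum and $\Delta_L$ is coprime to~$\lambda$, the $\lambda$-primary part of $H_1(M(K);\Q[t^{\pm1}])$ is canonically a direct sum of $|a_i|$ copies of the $\lambda$-primary part of $H_1(M(K_i);\Q[t^{\pm1}])$, one for each $i$ with $a_i\ne 0$. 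The Blanchfield form respects this decomposition, and on each factor it admits exactly the two metabolizers $\langle\alpha_J\rangle$ and $\langle\alpha_D\rangle$ coming from Figure~\ref{figure:R(J,D)}.

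I would then build an inductive $(n+1)$-stage tower of representations $\phi_k\colon \pi_1(M(K))\to \Gamma_k$ into poly-(torsion-free-abelian) groups, following~\cite{Cha-Kim:2017-1}. At each stage, the negaton $V$ forces the relevant higher-order Blanchfield pairing to admit a metabolizer extending into~$V$; the $\lambda$-localization confines this to the primary summand, so the metabolizer must be a direct sum, over the nonzero-$a_i$ factors, of either $\langle\alpha_J\rangle$ or $\langle\alpha_D\rangle$ on each copy. I expect the main obstacle to be ruling out the $\langle\alpha_D\rangle$ choice: since $D=\Wh^+(T)$ is topologically slice, only smooth obstructions can distinguish the two metabolizers. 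For this, I would apply the Ozsv\'ath-Szab\'o $d$-invariant inequality~\cite{Ozsvath-Szabo:2003-2} for definite bounding manifolds to the $p_i$-fold cyclic branched cover of~$K$, using N\'emethi's formula~\cite{Nemethi:2005-1} for the Heegaard Floer homology of negative definite plumbed 3-manifolds to compute the $d$-invariants of the branched covers of~$D$ and to show that the $\langle\alpha_D\rangle$ choice would violate the inequality. This forces $\langle\alpha_J\rangle$ at every stage, allowing the induction to descend through $J^i_{n-1}, J^i_{n-2}, \ldots, J^i_0$.

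Finally, at the bottom of the tower, the amenable signature theorem of~\cite{Cha-Orr:2009-1, Cha:2010-1} expresses the $L^2$ $\rho$-invariant of $\partial V$ with respect to $\phi_{n+1}$ as the $L^2$-signature defect of~$V$, which is bounded in absolute value by a universal constant times a linear function of $n$ and~$m$, at most $69\,713\,280\cdot (6n+8m+86)$ after accounting for the embedded surfaces in the negaton together with those introduced by the satellite construction. On the other hand, this $L^2$-invariant decomposes as a linear combination of the ordinary signatures $\rho(J^i_0, \Z_{p_i})$ weighted by the~$a_i$. Property~\ref{item:independence-of-signature-of-J_0} keeps the contributions from distinct~$i$ independent and additive, while property~\ref{item:signature-of-J_0-large-enough} forces each nonzero-$a_i$ contribution to exceed the permitted bound, producing the desired contradiction.
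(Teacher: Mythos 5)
Your toolkit and many of the references are correct---the proof does combine amenable $L^2$-signature obstructions with $d$-invariant estimates from N\'emethi's method, uses a localization at~$\lambda$, and separates contributions from distinct~$i$ via the primes~$p_i$---but you have the two metabolizer cases reversed, and the reversed version cannot be executed. After reducing to a single $n$-negaton $X^-$ for $K_1$ (by gluing the $(n+1)$-negaton for $K$, the standard cobordism $C$, and $n$-negatons for the other summands and for $L$), the Blanchfield metabolizer $P$ of $K_1$ is either $\langle\alpha_D\rangle$ or $\langle\alpha_J\rangle$. When $P=\langle\alpha_D\rangle$, the curve $\alpha_J$ \emph{survives} in $H_1(X^-;\Q[t^{\pm1}]\Sigma^{-1})$, and this survival is exactly what makes the representations down the satellite tower nontrivial on $\mu_0$, letting the amenable signature theorem detect $\rho(J^1_0,\Z_{p_1})$. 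When $P=\langle\alpha_J\rangle$, $\alpha_J$ is dead, so the $L^2$-tower cannot run; instead one uses the Cochran--Harvey--Horn unknotting trick to \emph{remove} $J^1_{n-1}$ and pass to $K_0=R(U,D)$, and the $d$-invariant contradiction on branched covers of $K_0$ (Lemma~\ref{lemma:d-invariant-obstruction} against Theorem~\ref{theorem:d-invariant-Sigma_r}) lands on the class $\widehat{x_1}$, the branched-cover image of $\alpha_J$---so it rules out precisely the $\langle\alpha_J\rangle$ case, not $\langle\alpha_D\rangle$. Your plan to rule out $\langle\alpha_D\rangle$ with $d$-invariants and then trace $\alpha_J$ through the tower after concluding $P=\langle\alpha_J\rangle$ applies each technique to the case it cannot touch.

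A second gap is the treatment of $L$. Your idea of decomposing the $\lambda$-primary part of the Alexander module of the whole connected sum is not how the paper isolates $L$; indeed the construction of $X^-$ already reduces the metabolizer question to the single knot $K_1$. The decisive new ingredient is to build the quotient by $\Im H_1(Z_L^0;\Q[t^{\pm1}]\Sigma^{-1})$ into the definition of the second term $\cP^2$ of the commutator series, which yields Assertion~\ref{assertion:triviality-Z_0^L}: $\pi_1(Z_L^0)^{(n)}$ maps into $\cP^{n+1}\pi_1(X)$. Combined with $H_1(M(L);\Q[t^{\pm1}]\Sigma^{-1})=0$ (this is where coprimality of $\Delta_L$ with $\lambda$ enters), this lets the amenable signature theorem apply at level $n$ to force $\osigmat_\Gamma(Z_L^0)=0$. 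Without this mechanism the contribution of the arbitrary $n$-bipolar knot $L$ to $\osigmat_\Gamma(X)$ cannot be controlled, and the final $L^2$-signature inequality does not close.
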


For the special case that $L$ is a trivial knot and $m=1$, the conclusion of
Theorem~\ref{theorem:linear-combination-non-negativity} was shown
in~\cite{Cha-Kim:2017-1}.  The general case of
Theorem~\ref{theorem:linear-combination-non-negativity} requires substantially
more sophisticated ideas and methods, which will be discussed in
Sections~\ref{section:L2-signatures} and~\ref{section:d-invariant-estimate}.

\subsection{Construction of a negaton}
\label{subsection:construction-X^-}

To prove Theorem~\ref{theorem:linear-combination-non-negativity} by
contradiction, suppose the knot $K$ is $(n+1)$-bipolar.  Recall that $K_i$ is
$n$-bipolar.  The following observation will be useful.  The knot $L$ in the
statement of Theorem~\ref{theorem:linear-combination-non-negativity} is
automatically $n$-bipolar, since $L$ is concordant to the sum of $K$, $-a_1K_1$,
\dots, $-a_{r-1}K_{r-1}$, and $-a_rK_r$ and each of $K$ and $K_i$ are
$n$-bipolar.

We may assume $a_i\ne 0$ for all $i$, by removing $K_i$ when $a_i=0$.  In
addition, by taking $-K$ instead of $K$, we may assume that $a_1>0$.  Under this
assumption, we will prove that $K$ is not $(n+1)$-negative.  To derive a
contradiction, suppose $K$ is $(n+1)$-negative.  As done
in~\cite[Section~2.3]{Cha-Kim:2017-1}, we will construct a certain $n$-negaton
for the first summand~$K_1$, which we will call~$X^-$ below.  Since the
generality of Theorem~\ref{theorem:linear-combination-non-negativity} does not
cause significant issues in this construction, we will closely
follow~\cite{Cha-Kim:2017-1}, with minor additional changes related to~$L$.

Use the $n$-bipolarity of $L$ to choose an $n$-negaton, say $Z^-_L$, bounded
by~$-M(L)=M(-L)$.  Use the $n$-bipolarity of $K_i$, choose an $n$-negaton
$Z^-_i$ bounded by $-M(K_i)=M(-K_i)$ for each $i$ for which $a_i>0$, and choose
$n$-negaton $Z^-_i$ bounded by $M(K_i)$ for each $i$ for which $a_i<0$. Let
$V^-$ be an $(n+1)$-negaton bounded by~$M(K)$.  
There is a standard cobordism $C$ bounded by the union of $\partial_-C := -M(K)$
and $\partial_+C := \bigl(\bigsqcup_i a_i M(K_i)\bigr) \sqcup M(L)$, which is
associated with the connected sum expression $K=\bigl(\#_{i=1}^r \, a_i K_i\bigr)
\# L$; $C$ is obtained by attaching, to $\bigl(\bigsqcup_{i=1}^r a_i M(K_i)
\times I\bigr) \sqcup M(L)\times I$, $N$ 1-handles that connects the component
and then attaching $N$ 2-handles which make meridians of the involved $N+1$
knots parallel, where $N=\sum_i |a_i|$.  A detailed description of $C$ can be
found, for instance, from~\cite[p.~113]{Cochran-Orr-Teichner:2002-1}. Define
\begin{equation}
  \label{equation:definition-of-negaton}
  X^- := V^- \cupover{\partial_-C} C \cupover{\partial_+C}
  \biggl((a_1-1)Z^-_1
  \sqcup \Bigl({\textstyle\bigsqcup\limits_{i>1} |a_i| Z^-_i} \Bigr)
  \sqcup Z^-_L \biggr).
\end{equation}
Figure~\ref{figure:negaton-construction} depicts the construction of~$X^-$. By
(the argument of)~\cite[Lemma~2.4]{Cha-Kim:2017-1}, $X^-$ is an $n$-negaton
bounded by~$M(K_1)$.

\begin{figure}[H]
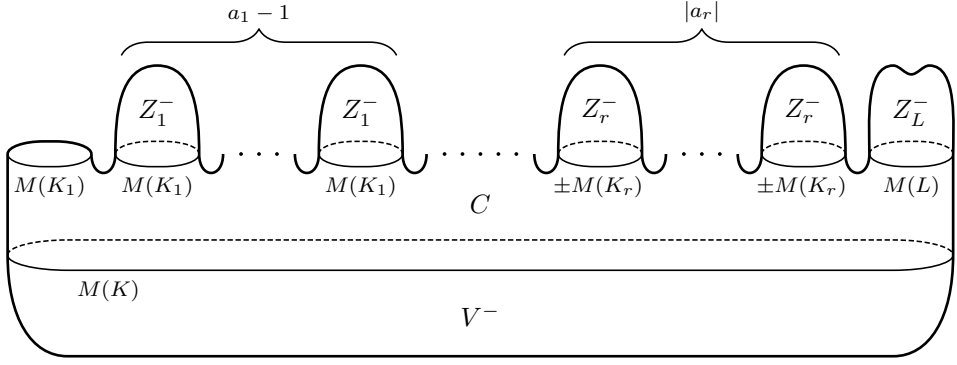

  \includestandalone{negaton-construction}
  \caption{The construction of~$X^-$.  The sign of $M(K_i)$ equals that
  of~$a_i$.}
  \label{figure:negaton-construction}
\end{figure}

Now, let
\begin{equation}
  \label{equation:definition-of-P}
  P := \Ker \{ H_1(M(K_1);\Q[t^{\pm1}]) \rightarrow H_1(X^-;\Q[t^{\pm1}]) \}.
\end{equation}
Since $X^-$ is an $n$-negaton with $n\ge 2$, $P$ is a metabolizer of the
Blanchfield pairing on $H_1(M(K_1);\Q[t^{\pm1}])$,
by~\cite[Theorem~5.8]{Cochran-Harvey-Horn:2012-1}.  (See also the statement
of~\cite[Lemma~2.5]{Cha-Kim:2017-1}.)  Since $K_1=R(J^1_{n-1},D)$ has exactly
two metabolizers $\langle\alpha_J\rangle$ and $\langle\alpha_D\rangle$, we have
the following two cases: $P=\langle\alpha_D\rangle$ or
$P=\langle\alpha_J\rangle$.  By deriving a contradiction for each case, the
proof of Theorem~\ref{theorem:linear-combination-non-negativity} will be
completed.

\section{\texorpdfstring{$L^2$}{L2}-signatures and localized mixed-type commutator series}
\label{section:L2-signatures}

In this section, we continue the proof of
Theorem~\ref{theorem:linear-combination-non-negativity}, for the case
$P=\langle\alpha_D\rangle$. 

Recall that we constructed an $n$-negaton $X^-$ in
\eqref{equation:definition-of-negaton} using the negatons $V^-$, $Z_L^-$
and~$Z_i^-$. To obtain a hyperbolic intersection form, take the connected sum of
the $(n+1)$-negaton $V^-$ and $b_2(V^-)$ copies of~$\C P^2$, and call the
result~$V^0$. Indeed, by~\cite[Proposition~5.5]{Cochran-Harvey-Horn:2012-1},
$V^0$ is a special type of 4-manifold called an \emph{integral $(n+1)$-solution}
in~\cite[Definition~3.1]{Cha:2010-1}, which particularly has a metabolic
intersection form over twisted coefficients.  We do not describe its definition
since we do not use it directly, but we will state and use its properties later.
For $Z_L^-$ and $Z_i^-$, define integral $n$-solutions $Z_L^0$ and $Z_i^0$ by
taking connected sum with copies of $\C P^2$ in the same way.

Repeat the construction of $X^-$, but now use $V^0$, $Z_L^0$ and $Z_i^0$ in
place of $V^-$, $Z_L^-$ and $Z_i^-$, to obtain a 4-manifold~$X^0$.  By the proof
of~\cite[Lemma~2.4]{Cha-Kim:2017-1}, $b_2(X^-)$ is equal to the sum of
$b_2(V^-)$, $b_2(Z_L^-)$, $(a_1-1)b_2(Z_1^-)$ and $|a_i| b_2(Z_i^-)$ ($i>1$).
Thus $X^0 = X^- \# (b_2(X^-) \C P^2)$.  Since $X^-$ is an $n$-negaton, it
follows that $X^0$ is an integral $n$-solution, again
by~\cite[Proposition~5.5]{Cochran-Harvey-Horn:2012-1}.

We will attach additional pieces to $X^0$, to obtain a sequence of 4-manifolds,
essentially following a technique first appeared
in~\cite{Cochran-Harvey-Leidy:2009-1}; see
also~\cites{Cochran-Harvey-Leidy:2009-2, Cochran-Harvey-Leidy:2009-3,
Cha:2010-1, Cha-Kim:2017-1}. The notation used below is close
to~\cites{Cha:2010-1,Cha-Kim:2017-1}. Consider the satellite construction
$J^1_{k+1} = P_k(\eta_k, J^1_k)$.  Due to~\cite{Cochran-Harvey-Leidy:2009-1},
there is a standard cobordism, which we denote by~$E_k$, from $M(J^1_{k+1})$ to
$M(J^1_k)\sqcup M(P_k)$ for $k=0$, \ldots,~$n-2$.  In
Section~\ref{subsection:localized-mixed-comm-series}, we will use an alternative
description given in Figure~\ref{figure:standard-cobordism}, which illustrates
that $E_k$ is obtained from $M(J^1_{k+1})\times [0,1]$ by attaching a 2-handle
and a 3-handle: start with $M(J^1_{k+1}) = M(P_k(\eta_k, J^1_k))$, attach a
2-handle along a zero-framed longitude of $J^1_k$ to obtain the second diagram,
and apply handle slide to obtain the last diagram, which is $M(J^1_k) \#
M(P_k)$.  Attach a 3-handle to obtain $M(J^1_k) \sqcup M(P_k)$.

\begin{figure}[H]
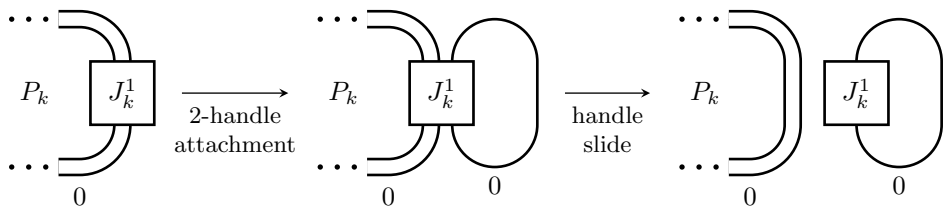

  \includestandalone{standard-cobordism}
  \caption{A handlebody description of the standard cobordism~$E_k$.}
  \label{figure:standard-cobordism}
\end{figure}

View $K_1=R(J^1_{n-1},D)$ as the satellite knot $R(U,D)(\alpha_J,J^1_{n-1})$,
and apply the same construction, to obtain a standard cobordism $E_{n-1}$ from
$M(K_1)$ to $M(J^1_{n-1})\sqcup M(R(U,D))$. Now, define 4-manifolds
$X_n$,~$X_{n-1}$, \dots,~$X_0=X$ as follows:
\begin{align*}
  X_n & := X^0
  \\
  X_{n-1} & := X_n \cupover{M(K_1)} E_{n-1} = X^0 \cupover{M(K_1)} E_{n-1}
  \\
  X_{n-2} & := X_{n-1} \cupover{M(J^1_{n-1})} E_{n-2} 
  = X^0 \cupover{M(K_1)} E_{n-1} \cupover{M(J^1_{n-1})} E_{n-2}
  \\
  &\hphantom{=}\vdots
  \\
  X=X_0 &:= X_1 \cupover{M(J^1_{1})} E_{0}
  = X^0 \cupover{M(K_1)} E_{n-1} \cupover{M(J^1_{n-1})} E_{n-2}
    \cupover{M(J^1_{n-2})} \cdots \cupover{M(J^1_{1})} E_{0}.
\end{align*}
See the schematic diagram in Figure~\ref{figure:higher-cobordism-diagram}.

\begin{figure}[ht]
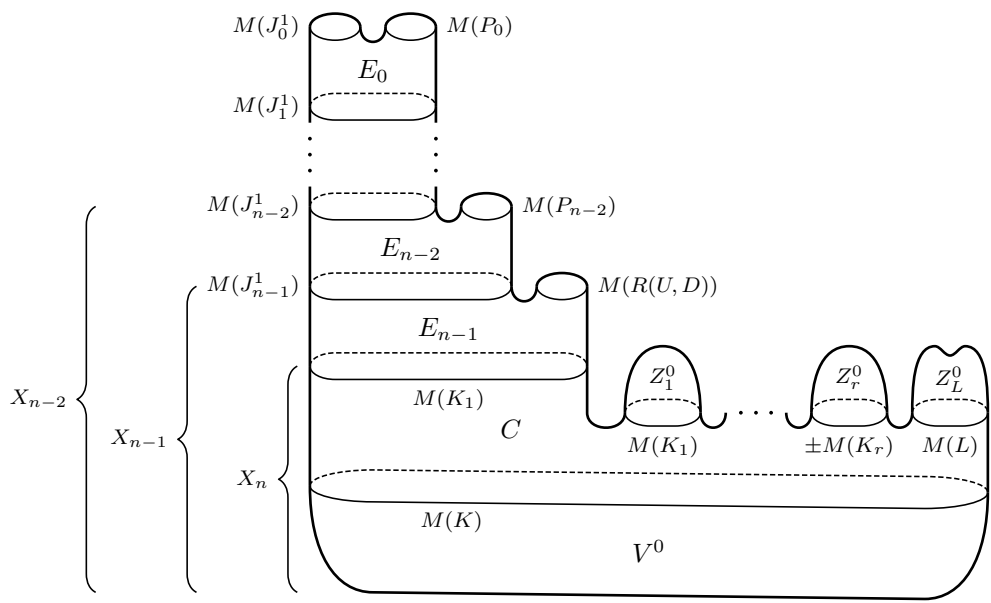

  \includestandalone{higher-cobordism-diagram}
  \caption{A schematic diagram of the 4-manifold~$X$.}
  \label{figure:higher-cobordism-diagram}
\end{figure}

We have that $H_1(X^0)=H_1(X^-)=\Z=\langle t \rangle$ where the generator $t$ is
represented by the meridian $\mu_K$ of $K$, since $X^-$ is a negaton bounded
by~$M(K)$.  See Definition~\ref{equation:definition-of-negaton}(1).  By a
Mayer-Vietoris argument using this, it follows that $H_1(X_k)=\Z$ for all $k=0$,
\dots,~$n$.

\subsection{A localized mixed-type commutator series associated
  with~\texorpdfstring{$X$}{X}}
\label{subsection:localized-mixed-comm-series}

Recall, from Section~\ref{subsection:construction-K_i}, that $p_1$ denotes the
first prime used in properties~\ref{item:signature-of-J_0-large-enough}
and~\ref{item:independence-of-signature-of-J_0}, and $\lambda(t)=(m+1)t-m$.
Let
\[
  \Sigma := \{f(t)\in \Q[t^{\pm1}]  \mid f(1)\ne 0, \,
  \gcd(f(t),\lambda(t)\lambda(t^{-1}))=1 \}.
\]
Obviously $\Sigma$ is a multiplicative subset.  Let $\Q[t^\pm]\Sigma^{-1}$ be
the localization.

\begin{definition}
  \label{definition:localized-commutator-series}
  Let $G$ be a group endowed with a homomorphism $G\to \pi_1(X)$ which induces
  an epimorphism $H_1(G) \twoheadrightarrow H_1(X)$.  For $i=0,1,\ldots,n+1$, define subgroups
  $\cP^i G$ of $G$ inductively as follows.  Let $\cP^0 G := G$, and $\cP^1 G$ be
  the kernel of the composition
  \[
     G\to \pi_1(X) \longtwoheadrightarrow H_1(X)=\Z=\langle t\rangle.
  \]

  Let $\cP^2G$ be the kernel of the composition
  \begin{align*}
    \cP^1 G
    & \longtwoheadrightarrow \frac{\cP^1 G}{[\cP^1 G,\cP^1 G]}
    \to \frac{\cP^1 G}{[\cP^1 G,\cP^1 G]} \otimesover{\Z}\Q
    = H_1\bigl(G;\Q[t^{\pm1}]\bigr) \to H_1\bigl(G;\Q[t^{\pm1}]\Sigma^{-1}\bigr)
    \\
    & 
    \to H_1\bigl(X;\Q[t^{\pm1}]\Sigma^{-1}\bigr)
    \longtwoheadrightarrow H_1\bigl(X;\Q[t^{\pm1}]\Sigma^{-1}\bigr) /
    \Im H_1\bigl(Z_L^0;\Q[t^{\pm1}]\Sigma^{-1}\bigr).
  \end{align*}
  Here, $\Im H_1(Z_L^0;\Q[t^{\pm1}]\Sigma^{-1})$ is the image of
  $H_1(Z_L^0;\Q[t^{\pm1}]\Sigma^{-1}) \to H_1(X;\Q[t^{\pm1}]\Sigma^{-1})$
  induced by the inclusion.
  
  For $i=2$, \dots,~$n-1$, let
  \[
    \cP^{i+1}G := \Ker \Bigl\{ \cP^i G
      \twoheadrightarrow \frac{\cP^i G}{[\cP^i G,\cP^i G]}
      \rightarrow \frac{\cP^i G}{[\cP^i G,\cP^i G]} \otimesover{\Z} \Q
      = H_1\bigl(G;\Q[G/\cP^i G]\bigr)
    \Bigr\}.
  \]
  Finally, define
  \[
    \cP^{n+1}G := \Ker \Bigl\{ \cP^n G
      \twoheadrightarrow \frac{\cP^n G}{[\cP^n G,\cP^n G]}
      \twoheadrightarrow \frac{\cP^n G}{[\cP^n G,\cP^n G]} \otimesover{\Z} \Z_{p_1}
      = H_1\bigl(G;\Z_{p_1}[G/\cP^n G]\bigr)
    \Bigr\}.
  \]
\end{definition}

It is straightforward to verify inductively that $\cP^k G$ is a normal subgroup
of~$G$ and the standard $k$th derived subgroup $G^{(k)}$ lies in $\cP^k G$ for
all~$k$.

We remark that the commutator series $\{\cP^k G\}_k$ in
Definition~\ref{definition:localized-commutator-series} is ``localized at
polynomials'' in the sense of~\cite[Sections~3
and~4]{Cochran-Harvey-Leidy:2009-2}, and is of ``mixed-coefficient'' type in the
sense that we use both $\Z_{p_1}$ and $\Q$ (see~\cite[Section~4.1]{Cha:2010-1}).

In particular, define the subgroups $\cP^i\pi_1(X_k)$, by applying
Definition~\ref{definition:localized-commutator-series} to the case
$G=\pi_1(X_k) \to \pi_1(X)$.  The following properties, which we will state as
Assertions~\ref{assertion:nontriviality-mu_J}
and~\ref{assertion:triviality-Z_0^L}, are essential for our purpose.  Let
$\mu_k\subset M(J^1_k)$ be the meridian of~$J^1_k$. By the construction of
$X_k$, $M(J^1_k)$ is a component of~$\partial X_k$ (see
Figure~\ref{figure:higher-cobordism-diagram}), and thus $\mu_k$ represents an
element in~$\pi_1(X_k)$ for $k\le n-1$.  For brevity, let $J^1_n := K_1$, so
that the previous sentence holds for $k=n$ as well.  Also, let
$(P_{n-1},\eta_{n-1}):=(R(U,D),\alpha_D)$, so that $J^1_k =
P_{k-1}(\eta_{k-1},J^1_{k-1})$ holds for $k=n$ as well.

\begin{assertion}
  \label{assertion:nontriviality-mu_J}
  The class of $\mu_k$ lies in $\pi_1(X_k)^{(n-k)}\subset\cP^{n-k}\pi_1(X_k)$
  and is nontrivial in $\cP^{n-k}\pi_1(X_k)/\cP^{n-k+1}\pi_1(X_k)$ for all
  $k=0$, $1$, \dots,~$n$.  In particular, the class $\mu_0$ is nontrivial
  in~$\cP^n\pi_1(X)/\cP^{n+1}\pi_1(X)$.
\end{assertion}

\begin{assertion}
  \label{assertion:triviality-Z_0^L}
  For $i= 1$, \dots,~$n$, the inclusion-induced map sends $\pi_1(Z_L^0)^{(i)}$
  to the subgroup $\cP^{i+1}\pi_1(X) \subset \pi_1(X)$.   In particular,
  $\pi_1(Z_L^0)^{(n)}$ maps to $\cP^{n+1}\pi_1(X)$.
\end{assertion}

\begin{remark}
  \phantomsection\leavevmode\Nopagebreak
  \begin{enumerate}
    \item  Analogues of Assertion~\ref{assertion:nontriviality-mu_J} for similar
    situations were established in earlier papers, for instance
    in~\cites{Cochran-Harvey-Leidy:2009-1, Cochran-Harvey-Leidy:2009-2,
    Cha:2010-1, Cha:2012-1, Cha-Kim:2017-1}. We will give a proof for our case,
    since our series $\cP^i$ is different (notably at $i=2$) from those in the
    literature. Assertion~\ref{assertion:triviality-Z_0^L} and its application
    are new, to the knowledge of the author.  
    \item
    Note that both $\mu_0$ and $\pi_1(Z_L^0)^{(n)}$ map to the $n$th subgroup
    $\cP^n \pi_1(X)$. Due to Assertions~\ref{assertion:nontriviality-mu_J}
    and~\ref{assertion:triviality-Z_0^L}, they have opposite nature in the next
    stage: $\pi_1(Z_L^0)^{(n)}$ lies in $\cP^{n+1} \pi_1(X)$, while $\mu_0$ does
    not.  This will be crucial in separating the contribution of the unknown
    knot $L$ from that of $K_1$ in the linear combination $K=\bigl(\#_{i=1}^r
    a_i K_i\bigr) \# L$. See Section~\ref{subsection:obstruction-from-rho},
    particularly the Cheeger-Gromov $\rho$-invariants
    in~\eqref{equation:computation-of-rho-for-J0}
    and~\eqref{equation:L2-signature-Z_L^0}.
  \end{enumerate}

\end{remark}

In the proof of Assertion~\ref{assertion:nontriviality-mu_J}, the following fact
will be useful.  Let $\lambda_{k-1}$ be the zero-linking longitude of
$J^1_{k-1}$, which lies in
\begin{equation*}
  E(J^1_{k-1}) \subset E(J^1_{k-1})\cup E(P_{k-1}\sqcup \eta_{k-1}) = E(J^1_k)
  \subset M(J^1_k) \subset \partial X_k.
\end{equation*}

\begin{assertion}
  \label{assertion:series-for-X_k-and_X_k+1}
  The inclusion $X_k \subset X_{k-1}$ induces an isomorphism
  $\cP^i\pi_1(X_k)/\langle \lambda_{k-1} \rangle \cong \cP^i\pi_1(X_{k-1})$
  for all $i\le n-k+2$.  Consequently, we have
  \begin{equation*}
    \cP^{n-k+1}\pi_1(X_k)/\cP^{n-k+2}\pi_1(X_k) \cong
    \cP^{n-k+1}\pi_1(X_{k-1})/\cP^{n-k+1}\pi_1(X_{k-1}).
  \end{equation*}
\end{assertion}

\begin{proof}[Proof of Assertion~\ref{assertion:series-for-X_k-and_X_k+1}]
  
  Since $E_{k-1}$ is obtained by attaching a 2-handle to $M(J^1_k)\times [0,1]$
  along $\lambda_{k-1}$ and then attaching a 3-handle (see
  Figure~\ref{figure:standard-cobordism}), $X_{k-1}=X_k\cup E_{k-1}$ is obtained
  from $X_k$ by the same handle attachments.  It follows that $\pi_1(X_{k-1})
  \cong \pi_1(X_k)/\langle \lambda_{k-1} \rangle$.  This shows the assertion for
  $i=0$.  Now, to proceed by induction, suppose $i\ge 1$ and suppose the
  assertion holds for $i-1$.  If $i=1$, we have the following commutative
  diagram with exact rows:
  \[
    \begin{tikzcd}
      \cP^i \pi_1(X_k) \ar[d,dotted,two heads] \ar[r,hook]
      & \cP^{i-1} \pi_1(X_k) \ar[d,two heads] \ar[r]
      & H_1(X) \ar[d,equal]
      \\
      \cP^i \pi_1(X_{k-1}) \ar[r,hook]
      & \cP^{i-1} \pi_1(X_{k-1}) \ar[r]
      & H_1(X)
    \end{tikzcd}
  \]
  Since the two rightmost horizontal arrows are to the \emph{same} target
  $H_1(X)$, there is an induced epimorphism $\cP^i\pi_1(X_k) \twoheadrightarrow
  \cP^i\pi_1(X_{k-1})$ and its kernel is equal to that of the epimorphism
  $\cP^{i-1}\pi_1(X_k) \twoheadrightarrow \cP^{i-1}\pi_1(X_{k-1})$.  So the the
  assertion holds for $i=1$.  For $i=2$, replace $H_1(X)$ in the above diagram
  by the quotient
  \[
    H_1\bigl(X;\Q[t^{\pm1}]\Sigma^{-1}\bigr) /
    \Im H_1\bigl(Z_L^0;\Q[t^{\pm1}]\Sigma^{-1}\bigr)
  \]
  and apply the same argument.
  
  For $i\ge 3$, we have
  \[
    \begin{tikzcd}
      \cP^i \pi_1(X_k) \ar[r,hook]
      & \cP^{i-1} \pi_1(X_k) \ar[d,two heads] \ar[r]
      & \dfrac{\cP^{i-1}\pi_1(X_k)}{(\cP^{i-1}\pi_1(X_k))^{(1)}} \otimes R
      \ar[d]
      \\[-1ex]
      \cP^i \pi_1(X_{k-1}) \ar[r,hook]
      & \cP^{i-1}\pi_1(X_{k-1}) \ar[r]
      & \dfrac{\cP^{i-1}\pi_1(X_{k-1})}{(\cP^{i-1}\pi_1(X_{k-1}))^{(1)}}\otimes R
    \end{tikzcd}
  \]
  where $R=\Q$ or $\Z_{p_1}$, depending on~$i$.  We claim that the rightmost
  vertical arrow is an isomorphism.  From the claim, it follows that the
  assertion holds for $i$, once again by the argument used above.  To show the
  claim, let $\gamma_{k-1}$ be the meridian of $J^1_{k-1}$ in the exterior
  $E(J^1_{k-1}) \subset M(J^1_k) \subset \partial X_k$.  Note that
  $\gamma_{k-1}$ is different from the meridian $\mu_{k-1}\subset
  M(J^1_{k-1})\subset \partial X_{k-1}$ used in the statement of
  Assertion~\ref{assertion:nontriviality-mu_J}, but $\gamma_{k-1}$ and
  $\mu_{k-1}$ are isotopic in the cobordism~$E_{k-1}$. The meridian
  $\gamma_{k-1}$ is identified with the curve $\eta_{k-1}$ which lies in the
  commutator subgroup $\pi_1(M(J^1_k))^{(1)}$.  Since $\pi_1(M(J^1_k))$ is
  normally generated by the meridian $\mu_k$, $\gamma_{k-1}$ lies in $\langle
  \mu_k\rangle^{(1)} = \langle \gamma_k\rangle^{(1)}$ in $\pi_1(X_k)$. By
  induction, it follows that $\gamma_{k-1}$ lies in
  $\langle\gamma_n\rangle^{(n-k+1)}$.  Therefore, the image of
  $\pi_1(E(J^1_{k-1}))$ lies in $\pi_1(X_k)^{(n-k+1)}$.  Since the longitude
  $\lambda_{k-1}$ lies in $\pi_1(E(J^1_{k-1}))^{(1)}$, it follows that
  \begin{equation}
    \label{equation:lambda-in-derived-subgroup}
    \lambda_{k-1} \in \pi_1(X_k)^{(n-k+2)}.
  \end{equation}
  Since $i\le n-k+2$, \eqref{equation:lambda-in-derived-subgroup}~implies that
  $\lambda_{k-1} \in \pi_1(X_k)^{(i)} \subset (\cP^{i-1}\pi_1(X_k))^{(1)}$.
  Also, by the induction hypothesis, $\cP^{i-1}\pi_1(X_k) \twoheadrightarrow
  \cP^{i-1}\pi_1(X_{k-1})$ is an epimorphism with kernel
  $\langle\lambda_{k-1}\rangle$.  It follows that the rightmost vertical arrow
  in the above diagram is an isomorphism. This completes the proof of the
  assertion.
\end{proof}

\begin{proof}[Proof of Assertion~\ref{assertion:nontriviality-mu_J}]

  In the proof of Assertion~\ref{assertion:series-for-X_k-and_X_k+1}, we already
  showed that $\mu_k$ lies in~$\pi_1(X_k)^{(n-k)}$.  This is the first part of
  Assertion~\ref{assertion:nontriviality-mu_J}.

  It remains to show that $\mu_k$ is nontrivial in
  $\cP^{n-k}\pi_1(X_k)/\cP^{n-k+1}\pi_1(X_k)$.  We will use reverse induction
  for $k=n$, $n-1$,~\dots,~$0$.  For $k=n$, $\pi_1(X_n)/\cP^1\pi_1(X_n) =
  H_1(X)$ by definition, and the meridian $\mu_n$ of $J^1_n = K_1$ is a
  generator of $H_1(X)$.  So the assertion holds.
  
  For the case $k=n-1$, let $\Lambda=\Q[t^{\pm1}]$ for brevity, and consider the
  following commutative diagram:
  \[
    \begin{tikzcd}[column sep=small]
      H_1(M(K_1);\Lambda) \ar[r] \ar[rd]
      &[-2ex] H_1(X_n;\Lambda\Sigma^{-1}) \ar[r,"\cong"]
      & H_1(X;\Lambda\Sigma^{-1}) \ar[r]
      &[-2ex] \dfrac{H_1(X;\Lambda\Sigma^{-1})}{\Im H_1(Z_L^0;\Lambda\Sigma^{-1})}
      \\
      & H_1(X_n\sm Z_L^0;\Lambda\Sigma^{-1}) \ar[u] \ar[r,"\cong"']
      & H_1(X\sm Z_L^0;\Lambda\Sigma^{-1}) \ar[u] \ar[ru,"\cong"']
    \end{tikzcd}
  \]
  Here, $H_1(X_n;\Lambda\Sigma^{-1}) \to H_1(X;\Lambda\Sigma^{-1})$ is an
  isomorphism, since $\pi_1(X)=\pi_1(X_0)$ is isomorphic to $\pi_1(X_n)/\langle
  \lambda_0,\ldots,\lambda_{n-1}\rangle$ by
  Assertion~\ref{assertion:series-for-X_k-and_X_k+1}, and the longitudes
  $\lambda_0,\ldots,\lambda_{n-1}$ lie in $\pi_1(X_n)^{(2)}$
  by~\eqref{equation:lambda-in-derived-subgroup}. The same argument shows that
  the bottom horizontal arrow is an isomorphism.  (Alternatively, one may use
  Mayer-Vietoris arguments to show that they are isomorphisms.) Also, consider
  the Mayer-Vietoris sequence for $X = \overline{X\sm Z_L^0} \cup_{M(L)} Z_L^0$:
  \[
    H_1(M(L);\Lambda\Sigma^{-1}) \to
    H_1(X\sm Z_L^0;\Lambda\Sigma^{-1}) \oplus H_1(Z_L^0;\Lambda\Sigma^{-1})
    \to H_1(X;\Lambda\Sigma^{-1}) \to 0.
  \]
  We have $H_1(M(L);\Lambda\Sigma^{-1}) = 0$ by the hypothesis that
  $\Delta_L(t)$ is relatively prime to the polynomial $\lambda(t)$.  It follows
  that the diagonal arrow on the right hand side of the above diagram is an
  isomorphism.

  By our hypothesis, the kernel $P$ of
  \[
    \langle \alpha_J\rangle \oplus \langle\alpha_D\rangle =
    H_1(M(K_1);\Lambda) \to H_1(X_n;\Lambda)
  \]
  is equal to the summand $\langle\alpha_D\rangle$.  So the other summand
  $\langle \alpha_J\rangle$ injects into $H_1(X_n;\Lambda)$.  Since $\langle
  \alpha_J\rangle \cong \Lambda/\langle\lambda\rangle$ is not annihilated by
  $\Sigma$, this implies that $\langle \alpha_J\rangle$ injects into
  $H_1(X_n;\Lambda\Sigma^{-1}) = H_1(X_n;\Lambda)\Sigma^{-1}$.  By the above
  diagram, it follows that $\langle \alpha_J\rangle$ injects into $H_1(X\sm
  Z_L^0;\Lambda\Sigma^{-1})$.  Thus $\alpha_J$ is nontrivial in
  $H_1(X;\Lambda\Sigma^{-1})/\Im H_1(Z_L^0;\Lambda\Sigma^{-1})$. Therefore, by
  the definition of $\cP^2\pi_1(X_n)$, $\alpha_J$ is nontrivial in the quotient
  \[
    \cP^1\pi_1(X_n)/\cP^2\pi_1(X_n) \subset
    H_1(X;\Lambda\Sigma^{-1})/\Im H_1(Z_L^0;\Lambda\Sigma^{-1}).
  \] 
  By Assertion~\ref{assertion:series-for-X_k-and_X_k+1},
  $\cP^1\pi_1(X_n)/\cP^2\pi_1(X_n) \cong
  \cP^1\pi_1(X_{n-1})/\cP^2\pi_1(X_{n-1})$.  Also, $\alpha_J$ is isotopic to the
  meridian $\mu_{n-1}$ in~$X_{n-1}$.  It follows that $\mu_{n-1}$ is nontrivial
  in the quotient $\cP^1\pi_1(X_{n-1})/\cP^2\pi_1(X_{n-1})$.  This is exactly
  the promised conclusion for $k=n-1$.

  Now, suppose $0\le k \le n-2$. The induction hypothesis is that $\mu_{k+1}$ is
  nontrivial in the quotient
  $\cP^{n-k-1}\pi_1(X_{k+1})/\cP^{n-k}\pi_1(X_{k+1})$.  To show that $\mu_k$ is
  nontrivial in $\cP^{n-k}\pi_1(X_k)/\cP^{n-k+1}\pi_1(X_k)$, we use an argument
  which is essentially the same as~\cite[Proof of Theorem~4.14]{Cha:2010-1},
  which was influenced by~\cite{Cochran-Harvey-Leidy:2009-1}.  Let $R=\Q$ if
  $k\ge 1$, and $R=\Z_{p_1}$ if $k=0$. Let
  \[
    B \colon
    H_1(M(J^1_{k+1});R[t^{\pm1}])\times H_1(M(J^1_{k+1});R[t^{\pm1}]) \to
    R(t)/R[t^{\pm1}]
  \]
  be the classical Blanchfield pairing of $J^1_{k+1}$ over $R$-coefficients. For
  brevity, let $G=\pi_1(X_{k+1})/\cP^{n-k}\pi_1(X_{k+1})$.  Consider the
  non-commutative Alexander module $\cA:= H_1(M(J^1_{k+1}); RG)$.  The
  non-commutative Blanchfield pairing $\cB \colon \cA\times \cA \to \cK/RG$ is
  defined following~\cite[Theorem~2.13]{Cochran-Orr-Teichner:1999-1}, where
  $\cK$ is the skew-quotient field of~$RG$. (For $R=\Z_{p_1}$, see
  also~\cite[Section~5]{Cha:2010-1}.)  In our case, since $2\le n-k\le n$, from
  Definition~\ref{definition:localized-commutator-series} it follows that
  $G=\pi_1(X_{k+1})/\cP^{n-k}\pi_1(X_{k+1})$ is poly-torsion-free-abelian
  (PTFA), and consequently $RG$ is an Ore domain due
  to~\cite{Cochran-Orr-Teichner:1999-1}*{Proposition~2.5}
  and~\cite{Cha:2010-1}*{Lemma~5.2}.  We will use the following known facts.
  
  \begin{enumerate}
    \item
    The nontriviality of $\mu_{k+1}$ in
    $\cP^{n-k-1}\pi_1(X_{k+1})/\cP^{n-k}\pi_1(X_{k+1}) \subset G$ implies that
    $\cA \cong RG \otimesover{R[t^{\pm1}]} H_1(M(J^1_{k+1});R[t^{\pm1}])$, and
    that $\cB(1\otimes x, 1\otimes y) = 0$ if and only if $B(x,y)=0$.  This is
    due to~\cite[Theorem~4.7]{Leidy:2006-1}, \cite[Theorem~5.16]{Cha:2003-1} and
    \cite[Lemma~6.5, Theorem~6.6]{Cochran-Harvey-Leidy:2009-1}.  See
    also~\cite[Theorem~5.4]{Cha:2010-1}.

    \item
    The 4-manifold $X_{k+1}$ endowed with $\pi_1(X_{k+1}) \to G$ is a
    Blanchfield bordism in the sense of~\cite[Definition~4.11]{Cha:2012-1}, due
    to an argument in~\cite[p.~3270]{Cha:2012-1} which
    uses~\cite[Theorem~4.13]{Cha:2012-1}.  (The 4-manifold $W_{k+1}$
    in~\cite{Cha:2012-1} plays the role of our~$X_{k+1}$.)  The only property of
    a Blanchfield bordism we need is the following: for all $z$ in $\Ker\{\cA
    \to H_1(X_{k+1};RG)\}$, $\cB(z,z)=0$ by~\cite[Theorem~4.12]{Cha:2012-1}.
  \end{enumerate}

  Recall that $E(P_k\sqcup \eta_k) \cup E(J^1_k) = E(J^1_{k+1}) \subset
  M(J^1_{k+1})\subset \partial X_{k+1}$.  Denote a zero-linking longitude of
  $\eta_k$ in $E(P_k\sqcup \eta_k)\subset M(J^1_{k+1})$ by~$\eta_k$, abusing
  notation.  If $\eta_k$ is trivial in
  $\cP^{n-k}\pi_1(X_{k+1})/\cP^{n-k+1}\pi_1(X_{k+1})$, then by the definition of
  $\cP^{n-k+1}$, $\eta_k=1\otimes \eta_k$ lies in the kernel of $\cA \to
  H_1(X_{k+1};RG)$.  By~(2), from this it follows that $\cB(1\otimes
  \eta_k,1\otimes \eta_k) = 0$, and consequently by~(1), $B(\eta_k,\eta_k)=0$.
  Since $J^1_{k+1} = P_k(\eta_k,J^1_k)$ with $k\le n-2$, the Alexander module
  $H_1(M(J^1_{k+1});R[t^{\pm1}])$ is isomorphic to that of stevedore's knot
  $P_k$, which is a cyclic module generated by~$\eta_k$.  It contradicts the
  non-singularity of the classical Blanchfield pairing~$B$. This shows that
  $\eta_k$ is nontrivial in $\cP^{n-k}\pi_1(X_{k+1})/\cP^{n-k+1}\pi_1(X_{k+1})$,
  which is isomorphic to $\cP^{n-k}\pi_1(X_k)/\cP^{n-k+1}\pi_1(X_k)$ by
  Assertion~\ref{assertion:series-for-X_k-and_X_k+1}\@.  Since $\eta_k$ is
  identified with $\mu_k$, it follows that $\mu_k$ is nontrivial in the quotient
  $\cP^{n-k}\pi_1(X_k)/\cP^{n-k+1}\pi_1(X_k)$.  This completes the proof of
  Assertion~\ref{assertion:nontriviality-mu_J}\@.
\end{proof}

\begin{proof}[Proof of Assertion~\ref{assertion:triviality-Z_0^L}]
  
  Recall that Assertion~\ref{assertion:triviality-Z_0^L} says that
  $\pi_1(Z_0^L)^{(i)}$ maps to $\cP^{i+1}\pi_1(X)$ for $1\le i \le n$.  To
  show this for $i=1$, observe that the composition
  \[
    \begin{aligned}
      \frac{\pi_1(Z_L^0)^{(1)}}{\pi_1(Z_L^0)^{(2)}} = H_1(Z_L^0;\Z[t^{\pm1}])
      & \to H_1(X;\Z[t^{\pm1}])
      \to H_1(X;\Q[t^{\pm1}]\Sigma^{-1})
      \\
      & \to H_1(X;\Q[t^{\pm1}]\Sigma^{-1})/\Im H_1(Z_L^0;\Q[t^{\pm1}]\Sigma^{-1})
    \end{aligned}
  \]
  is obviously zero.  By definition of $\cP^2 \pi_1(X)$, it follows that
  $\pi_1(Z_L^0)^{(1)}$ maps to $\cP^2\pi_1(X)$.  Therefore, for all $1\le i\le
  n$, $\pi_1(Z_L^0)^{(i)} = (\pi_1(Z_L^0)^{(1)})^{(i-1)}$ maps to
  $\cP^2\pi_1(X)^{(i-1)}$, which is a subgroup of~$\cP^{i+1}\pi_1(X)$.
\end{proof}

\subsection{Obstruction from Cheeger-Gromov \texorpdfstring{$\rho$}{rho}-invariants}
\label{subsection:obstruction-from-rho}

Now, we will use the Cheeger-Gromov $\rho$-invariant to derive a contradiction.
We begin with some background.
For a connected closed 3-manifold $M$ and a homomorphism $\phi\colon \pi_1(M) \to \Gamma$ with $\Gamma$ arbitrary, the Cheeger-Gromov invariant $\rhot(M,\phi)\in \R$ is defined~\cite{Cheeger-Gromov:1985-1}.
The value of $\rhot(M,\phi)$ is preserved under composition with automorphisms of $\pi_1(M)$, so one can view $\rhot(M,\phi)$ as an invariant of $M$ equipped with (the homotopy class of) a map $\phi\colon M\to B\Gamma=K(\Gamma,1)$, instead of $\pi_1(M) \to \Gamma$.
Even when $M$ is not connected, $\rhot(M,\phi)$ is defined for $\phi\colon M\to B\Gamma$, with additivity under disjoint union:
$\rhot(M,\phi)=\sum_i\rhot(M_i,\phi|_{M_i})$ where $M=\bigcup_i M_i$ with components~$M_i$.

In this paper, we do not use the definition of $\rhot(M,\rhot)$ given in \cite{Cheeger-Gromov:1985-1}.
Instead, for our purpose, the following $L^2$-signature defect interpretation is useful.
If $M=\bigcup M_i$ bounds a 4-manifold $W$ and $\phi\colon M\to B\Gamma$ factors through $W$, then $\rhot(M,\phi) = \sum_i\rhot(M_i,\phi|_{M_i})$ is equal to the $L^2$-signature defect $\smash{\osigmat_\Gamma(W)}:=\smash{\sign^{(2)}_\Gamma(W)} -\sign(W)$, where $\sign(W)$ is the ordinary signature and $\smash{\sign^{(2)}_\Gamma(W)}$ is the $L^2$-signature of $W$ over the group~$\Gamma$.
We note that this approach can also be used to provide an alternative definition of $\rhot(M,\phi)$ for arbitrary~$(M,\phi)$.
As references, see, for instance, \cite{Chang-Weinberger:2003-1}, \cite[Section~2]{Cochran-Teichner:2003-1}, \cite[Section~3]{Harvey:2006-1}, \cite[Section~2.1]{Cha:2014-1}. 

For our case, let $\Gamma:= \pi_1(X) / \cP^{n+1} \pi_1(X)$.  For a connected
3-dimensional submanifold $M$ in $X$, denote the composition $\pi_1(M)\to
\pi_1(X)\to \Gamma$ by $\phi$, abusing notation.  

Then, by the $L^2$-signature defect interpretation for $\rhot(\partial X,\phi)$,
we have
\begin{multline}
  \label{equation:L2-signature-of-X}
  \rhot(M(J^1_0),\phi) + \sum_{k=0}^{n-2} \rhot(M(P_k),\phi) +
  \rhot(M(R(U,D)),\phi) = \osigmat_\Gamma(X)
  \\
  = \osigmat_\Gamma(V^0) + \osigmat_\Gamma(C)
   + \osigmat_\Gamma(Z^0_L) + \sum_{i,j} \osigmat_\Gamma(Z^0_{i,j})
   + \sum_{k=0}^{n-1} \osigmat_\Gamma(E_k),
\end{multline}
where the 4-manifolds $Z^0_{i,j}$ are copies of $Z^0_i$ used in the construction
of~$X$.  (See Figure~\ref{figure:higher-cobordism-diagram}.)  The second
equality is obtained by Novikov additivity of $L^2$-signatures (for instance,
see~\cite{Cochran-Orr-Teichner:1999-1}*{Lemma~5.9}).

Recall that $\mu_0$ is the meridian of $J^1_0$ in~$M(J^1_0)$.  By
Assertion~\ref{assertion:nontriviality-mu_J}, $\phi(\mu_0)$ is nontrivial
in~$\Gamma$.  Since $\phi(\mu_{J^1_0})$ lies in the subgroup $\cP^n
\pi_1(X)/\cP^{n+1} \pi_1(X)$ of $\Gamma$, which is a vector space over
$\Z_{p_1}$ by Definition~\ref{definition:localized-commutator-series}, it
follows that $\mu_{J^1_0}$ has order $p_1$.  So the image of $\pi_1(M(J^1_0))$
in $\Gamma$ under $\phi$ is isomorphic to~$\Z_{p_1}$.
By~\cite[Lemma~8.7]{Cha-Orr:2009-1}, this implies that
\begin{equation}
  \label{equation:computation-of-rho-for-J0}
  \rhot(M(J^1_0),\phi) = \rho(J^1_0,\Z_{p_1})
\end{equation}
where $\rho(J^1_0,\Z_{p_1})$ is defined by~\eqref{equation:rho-J}.

By the explicit universal bound for the Cheeger-Gromov invariants given in~\cite[Theorem~1.9]{Cha:2014-1}, we have
\begin{equation}
  \label{equation:universal-bound-computation-1}
  \big|\rhot\big(M(P_k),\phi\big)\big| \le 6\cdot 69\,713\,280 
\end{equation}
since stevedore's knot $P_k$ has 6 crossings.  Similarly,
by~\cite[Theorem~1.9]{Cha:2014-1},
\begin{equation}
  \label{equation:universal-bound-computation-2}
  \big|\rhot\big(M(R(U,D)),\phi\big)\big| \le (8m+92)\cdot 69\,713\,280 
\end{equation}
since $R(U,D)$ has a diagram with $8m+92$ crossings (see
Figure~\ref{figure:R(J,D)}). 

By \cite[Lemma~2.4]{Cochran-Harvey-Leidy:2009-1},  the following holds for each~$k$.
\begin{equation}
  \label{equation:L2-signature-for-standard-cobordism}
  \osigmat_G(C)=0, \quad \osigmat_G(E_k)=0.
\end{equation}

To evaluate the terms $\osigmat_\Gamma(V^0)$ and $\osigmat_\Gamma(Z^0_L)$
in~\eqref{equation:L2-signature-of-X}, we will use the following result:

\begin{theorem}[Amenable Signature Theorem~{\cite[Theorem~3.2]{Cha:2010-1}}]
  \label{theorem:amenable-signature}
  Suppose that $W$ is an integral $(n+1)$-solution bounded by the zero surgery
  manifold $M(K)$ of a knot~$K$. Suppose $\Gamma$ is a group which satisfies
  $\Gamma^{(n+1)}=\{1\}$ and lies in Strebel's class $D(\Z_{p_1})$ in the sense
  of~\cite{Strebel:1974-1} \textup{(}or equivalently $\Gamma$ is a locally
  $p$-indicable group, due to~\cite{Howie-Schneebeli:1983-1}\textup{)}.  If
  $\phi\colon \pi_1(M(K)) \to \Gamma$ is a homomorphism which factors through
  $\pi_1(W)$ and sends the meridian of $K$ to an infinite order element
  in~$\Gamma$, then $\rhot(M(K),\phi) = \osigmat_\Gamma(W) = 0$.
\end{theorem}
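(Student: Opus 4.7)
My plan is to reduce the theorem to a single vanishing statement about the $L^2$-signature defect of $W$, and then analyze that defect via the integral $(n+1)$-solution structure. Because the map $\phi\colon\pi_1(M(K))\to\Gamma$ factors through $\pi_1(W)$, the $L^2$-signature defect interpretation of $\rhot$ described earlier in the paper gives $\rhot(M(K),\phi) = \osigmat_\Gamma(W)$ directly, so it suffices to show $\osigmat_\Gamma(W)=0$. I would do this by proving $\sign(W)=0$ and $\sign^{(2)}_\Gamma(W)=0$ separately.

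The vanishing of the ordinary signature should be essentially built into the definition of an integral $(n+1)$-solution. By the definition of \cite{Cha:2010-1}, $W$ admits embedded surfaces $\{L_i, D_i\}$ whose classes form a basis of $H_2(W;\Z)/\mathrm{torsion}$, with intersection numbers $L_i\cdot L_j=0$ and $L_i\cdot D_j=\delta_{ij}$, and with each $\pi_1(L_i)$ mapping into $\pi_1(W)^{(n+1)}$. The ordinary intersection form is therefore metabolic, so $\sign(W)=0$. For the $L^2$-signature, the strategy is to lift the classical Lagrangian $L=\langle[L_i]\rangle$ to a submodule $\widetilde L\subset H_2(W;\N\Gamma)$. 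Since $\Gamma^{(n+1)}=\{1\}$, the derived-subgroup condition forces $\pi_1(L_i)$ to map trivially to $\Gamma$, so each $L_i$ lifts to the $\Gamma$-cover of $W$ and gives a well-defined class $\widetilde L_i\in H_2(W;\Z[\Gamma])\to H_2(W;\N\Gamma)$. A direct calculation with the equivariant intersection pairing should show that $\widetilde L=\sum_i\N\Gamma\cdot\widetilde L_i$ is isotropic with respect to the $\N\Gamma$-valued intersection form.

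The main obstacle, and the step where Strebel's class $D(\Z_{p_1})$ and the infinite order of the meridian are essential, is promoting this isotropy to full Lagrangianness in the $L^2$-dimension sense, for a Hermitian form over $\N\Gamma$ admitting a Lagrangian automatically has zero $L^2$-signature (by the standard $*$-involution argument, analogous to the complex case). What is required is an inequality of the form $\ldim_\Gamma H_2(W;\N\Gamma) \le 2\,\ldim_\Gamma \widetilde L$, and this is where Strebel's condition should enter: for $\Gamma$ in $D(\Z_{p_1})$ one expects a controlled comparison between $\N\Gamma$-valued $\ell^2$-dimensions and ordinary $\Z_{p_1}$-Betti numbers (flatness/injectivity of $\Z_{p_1}\Gamma\to\N\Gamma$ on the relevant homology), so that the half-dimensionality of $L\subset H_2(W;\Z)$ lifts to the $\ell^2$-setting. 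The infinite-order hypothesis on the meridian should be used here to ensure that $H_*(M(K);\N\Gamma)$ is trivial in degree where it could contaminate the count, so that only the interior contribution of $W$ matters. The delicate technical point I anticipate is establishing $\N\Gamma$-linear independence of the $\widetilde L_i$ without Strebel-style hypotheses producing stray torsion; I expect this to be the crux of the proof and the reason the theorem is stated with precisely these coefficient-class and meridian-order assumptions.
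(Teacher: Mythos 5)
The paper does not prove this theorem: the statement is a verbatim citation of Theorem~3.2 of Cha's 2010 paper, and no proof is given in the present text. So there is no proof here to compare yours against; I will assess your sketch against the argument in that reference.

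Your outline captures the right high-level strategy --- identify $\rhot(M(K),\phi)$ with $\osigmat_\Gamma(W)$ via the factorization through $\pi_1(W)$, note that the metabolic intersection form of an integral $(n+1)$-solution forces $\sign(W)=0$, and lift the Lagrangian $L=\langle L_i\rangle$ to $\N\Gamma$-coefficients using $\pi_1(L_i)\to\pi_1(W)^{(n+1)}\to\Gamma^{(n+1)}=\{1\}$ --- and you correctly flag where both delicate hypotheses should enter. But the step you yourself call ``the crux'' is precisely the load-bearing step, and you offer no argument for it. Showing that $\widetilde L$ is isotropic is routine; showing it is a genuine Lagrangian, i.e.\ establishing $\ldim_\Gamma H_2(W;\N\Gamma)\le 2\,\ldim_\Gamma\widetilde L$, is the technical content of the theorem and does not follow formally from anything you wrote. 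It requires the homological algebra developed by Cha--Orr: a dimension-comparison result, valid for amenable groups $\Gamma$ in Strebel's class $D(\Z_{p_1})$, relating $\ell^2$-dimensions over $\N\Gamma$ to dimensions over a suitable residue/quotient ring, combined with Poincar\'e--Lefschetz duality for $W$ over $\N\Gamma$, where the vanishing $H_1(M(K);\N\Gamma)=0$ supplied by the infinite-order meridian controls the boundary term. Without that machinery you have established only isotropy of $\widetilde L$, which is not sufficient to conclude $\sign^{(2)}_\Gamma(W)=0$. In short, the plan is a faithful roadmap, but the decisive lemma is asserted, not proven.
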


In our case, $V^0$ is an integral $(n+1)$-solution bounded by~$M(K)$. Also, the
group $\Gamma$ lies in $D(\Z_{p_1})$ by \cite[Lemma~6.8]{Cha-Orr:2009-1}, and we
have $\Gamma^{(n+1)}=\{1\}$ since $\pi_1(X)^{(n+1)} \subset \cP^{n+1}\pi_1(X)$.
The meridian of $K$ has infinite order in~$\Gamma$ since $\Gamma$ surjects onto
$H_1(X)=\Z$ generated by the meridian.  By Amenable Signature
Theorem~\ref{theorem:amenable-signature}, it follows that
\begin{equation}
  \label{equation:L2-signature-for-V0}
  \osigmat_\Gamma(V^0) = \rhot(M(K),\phi)=0.
\end{equation}

Now we will evaluate $\osigmat_\Gamma(Z^0_L)$, using Amenable Signature Theorem
again.  Note $\partial Z^0_L=M(L)$.  An important difference from the above
paragraph is that the 4-manifold $Z^0_L$ is an integral $n$-solution, not $n+1$.
So, Amenable Signature Theorem does not apply directly over $\Gamma$, since
$\Gamma^{(n)}$ is not necessarily trivial.  Instead, we proceed as follows.

Note that the map $\phi\colon \pi_1(M(L)) \to \pi_1(Z_L^0) \to \Gamma$ factors
through $\pi_1(Z_L^0)/\pi_1(Z_L^0)^{(n)}$, by
Assertion~\ref{assertion:triviality-Z_0^L}\@. Let $G$ be the image of
$\pi_1(Z_L^0)/\pi_1(Z_L^0)^{(n)}$ in~$\Gamma$, and let $\psi\colon\pi_1(M(L))
\to G$ be the map induced by~$\phi$.  Since $G$ injects into $\Gamma$, we have
$\rhot(M(L),\phi) = \rhot(M(L),\psi)$, by the $L^2$-induction property (for
instance see~\cite[Eq.~2.3]{Cheeger-Gromov:1985-1}).  Now, since $G$ is a
subgroup of $\Gamma$ which is in $D(\Z_{p_1})$, $G$ is in Strebel's class
$D(\Z_{p_1})$ too. Also, $G^{(n)}$ is trivial since it is the image
of~$\pi_1(Z_L^0)/\pi_1(Z_L^0)^{(n)}$.  The meridian $\mu_L$ of $L$ has infinite
order in~$G$, since $G$ surjects onto $H_1(X)=\Z$ which $\mu_L$ generates.
Therefore Amenable Signature Theorem~\ref{theorem:amenable-signature} (with $n$
in place of $n+1$) applies to $(M(L),\psi)$ to conclude that
\begin{equation}
  \label{equation:L2-signature-Z_L^0}
  \osigmat_\Gamma(Z^0_L) = \rhot(M(L),\phi) = \rhot(M(L),\psi) = 0.
\end{equation}

By~\cite[Lemma~3.3]{Cha-Kim:2017-1}, we may assume that each $Z^0_{i,j}$ has the
property that $\osigmat_\Gamma(Z^0_{i,j})$ is equal to either $0$ or
$\rho(J^i_0,\Z_{p_1})$.  (Indeed, \cite[Lemma~3.3]{Cha-Kim:2017-1} applies when
every $J^i_0$ is $0$-negative; it is the case by
property~\ref{item:negativity-of-J_0} in
Section~\ref{subsection:construction-K_i}.)  Moreover, by
property~\ref{item:independence-of-signature-of-J_0} in
Section~\ref{subsection:construction-K_i}, we have
\begin{equation}
  \label{equation:L2-signature-Z^0_i,j}
  \osigmat_\Gamma(Z^0_{i,j}) = \begin{cases}
    0 \text{ or } \rho(J^1_0,\Z_{p_1}) &\text{if }i=1,
    \\
    0 &\text{if }i>1.
  \end{cases}
\end{equation}

Now,
combine~\eqref{equation:L2-signature-of-X}--\eqref{equation:L2-signature-Z^0_i,j}
to obtain
\[
  N\cdot |\rhot(J^1_0,\Z_{p_1})| \le (6n+8m+86)\cdot 69\;713\;280
\]
where $N$ is one plus the number of the 4-manifolds $Z^0_{1,j}$ such that
$\osigmat(Z^0_{1,j})\ne 0$.  Since $N\ge 1$, it contradicts
property~\ref{item:signature-of-J_0-large-enough} in
Section~\ref{subsection:construction-K_i}.  This completes the proof that $P$
cannot be equal to~$\langle \alpha_D \rangle$.  That is, $P$ must be~$\langle
\alpha_J \rangle$.

\section{Computing and estimating \texorpdfstring{$d$}{d}-invariants}
\label{section:d-invariant-estimate}

In this section, we will continue the proof of
Theorem~\ref{theorem:linear-combination-non-negativity}, to reach a
contradiction under the hypothesis that $P=\langle\alpha_J\rangle$.

\subsection{Finite cyclic covers and their \texorpdfstring{$d$}{d}-invariants}
\label{subsection:from-infinite-cover-to-finite}

We begin by applying a trick introduced in~\cite{Cochran-Harvey-Horn:2012-1},
which we describe below.  Let $K_1=K_0(\alpha,J)$ is a satellite knot, where the
pattern satisfies $\lk(K_0,\alpha)=0$.  Then, the identity on $E(K\sqcup
\alpha)$ extends to a map $E(K_1)=E(K\sqcup \alpha)\cup E(J) \to
E(K\sqcup\alpha) \cup E(U)=E(K_0)$ which induces an isomorphism
$H_1(M(K_1);\Q[t^{\pm1}]) \to H_1(M(K_0);\Q[t^{\pm1}])$, under which we will
identify the Alexander modules.  Essentially,
\cite[Lemma~8.2]{Cochran-Harvey-Horn:2012-1} says the following (see also
\cite[Lemma~5.1]{Cha-Kim:2017-1}): if $K_1=K_0(\alpha,J)$ admits a $1$-negaton
$X_1$ bounded by $M(K_1)$ and if $J$ is unknotted by changing some positive
crossings to negative, then $K_0$ has a $1$-negaton $X_0$ bounded by $M(K_0)$
such that the two maps
\[
  H_1(M(K_i);\Q[t^{\pm1}]) \to H_1(X_i;\Q[t^{\pm1}]) \quad (i=0,1)
\]
have the identical kernel.  In particular, this applies to the satellite
knot $K_1 = R(U,D)(\alpha_D,J^1_{n-1})$ defined in
Section~\ref{subsection:construction-K_i}, since $J^1_{n-1}=P_{n-2}(\eta_n,
J^1_{n-2})$ is unknotted by changing a single positive crossing (see
Figure~\ref{figure:stevedore-pattern}).  Note that here we use that $n\ge 2$.
Therefore, in our case, the knot $K_0 := R(U,D)$ admits a $1$-negaton bounded
by~$M(K_0)$, say $W$, such that
\begin{equation}
  \label{equation:kernel-condition-after-removing-J}
  \langle\alpha_J\rangle = \Ker\{H_1(M(K_0);\Q[t^{\pm1}])
    \rightarrow H_1(W;\Q[t^{\pm1}])\}.
\end{equation}
We will derive a contradiction from the existence of this 1-negaton~$W$
for~$K_0$.

The next step is to pass to finite degree branched cyclic covers, to which
Heegaard Floer homology machinery applies,
following~\cite[Section~5.1]{Cha-Kim:2017-1}. Let $\Sigma_r$ be the $r$-fold
branched cyclic cover of~$(S^3,K_0)$. The curves $\alpha_J$ and $\alpha_D$ in
Figure~\ref{figure:R(J,D)} represent homology classes in $\Sigma_r$, say $x_1$
and $x_2\in H_1(\Sigma_r)$ respectively.  (The classes $x_1$ and $x_2$ are
defined up to covering transformation, but it will cause no problem in our
argument.)  Due to~\cite{Milnor:1968-1}, we have $H_1(\Sigma_r) =
H_1(M(K_0);\Z[t^{\pm1}])/\langle t^r-1\rangle$.  Recall that
$H_1(M(K_0);\Z[t^{\pm1}])$ is given
by~\eqref{equation:integral-alexander-module}. From this, by elementary
computation, it follows that that $H_1(\Sigma_r)=\Z_{(m+1)^r-m^r} \oplus
\Z_{(m+1)^r-m^r}$ and the summands are generated by $x_1$ and $x_2$.  In
particular, $\Sigma_r$ is a $\Z_2$-homology sphere.

For a rational homology 3-sphere $Y$ and a \spinc\ structure $\mathfrak{t}$
on~$Y$, Ozsv\'ath and Szab\'o defined a correction term invariant
$d(Y,\mathfrak{t})$ using the Heegaard Floer chain
complex~\cite{Ozsvath-Szabo:2003-2}.  In case of a $\Z_2$-homology sphere $Y$,
the unique spin structure determines a canonical \spinc\ structure on~$Y$, which
we denote by~$\mathfrak{s}_Y$, and all \spinc\ structures of $Y$ are given in
the form $\mathfrak{s}_Y+c$, where $c\in H^2(Y)$ and $+$ designates the action
of $H^2(Y)$ on the set of \spinc\ structures.  For $x\in H_1(Y)$, let $\widehat
x\in H^2(Y)$ be the Poincar\'e dual of~$x$.  Techniques used
in~\cites{Cochran-Harvey-Horn:2012-1,Cha-Kim:2017-1} give us the following
$d$-invariant obstruction.

\begin{lemma}
  \label{lemma:d-invariant-obstruction}
  Suppose $M(K_0)$ bounds a $1$-negaton which
  satisfies~\eqref{equation:kernel-condition-after-removing-J}.  If $r$ is
  sufficiently large, then $d(\Sigma_r, \mathfrak{s}_{\Sigma_r}+ k\cdot
  \widehat{x_1})\ge 0$ for all $k\in \Z$.
\end{lemma}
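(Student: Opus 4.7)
The plan is to mimic the strategy of Cochran-Harvey-Horn, adapted as in Cha-Kim, Section~5. I would construct a negative definite $4$-manifold bounded by $\Sigma_r$, use the metabolizer condition~\eqref{equation:kernel-condition-after-removing-J} to force $x_1$ to bound rationally in it, and apply Ozsv\'ath-Szab\'o's $d$-invariant inequality.

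First, take the $r$-fold cyclic cover $\widetilde W_r$ of the $1$-negaton $W$ induced by $\pi_1(W)\twoheadrightarrow H_1(W)=\Z\twoheadrightarrow\Z_r$. Its boundary is the $r$-fold cyclic cover of $M(K_0)$, which differs from $\Sigma_r$ by surgery on a single circle lifted from the meridian; attaching a $2$-handle and then a $3$-handle yields a $4$-manifold $V_r$ with $\partial V_r = \Sigma_r$. Since the basis surfaces of $H_2(W)$ lie in the commutator subgroup $\pi_1(W)^{(1)}$, each one lifts to $r$ disjoint copies in $\widetilde W_r$, still of self-intersection $-1$. A Mayer-Vietoris computation then shows that these span $H_2(V_r;\Q)$ as a diagonal lattice, so $V_r$ is negative definite of rank $r\cdot b_2(W)$.

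Next, I would translate the metabolizer condition to $\Sigma_r$ and $V_r$. Via the identification $H_1(\Sigma_r)=H_1(M(K_0);\Z[t^{\pm1}])/(t^r-1)$, the class $x_1$ is the reduction mod $t^r-1$ of $\alpha_J$, and the condition $\alpha_J\in\ker\{H_1(M(K_0);\Q[t^{\pm1}])\to H_1(W;\Q[t^{\pm1}])\}$ descends to $x_1\in\ker\{H_1(\Sigma_r;\Q)\to H_1(V_r;\Q)\}$. Consequently some positive multiple $N x_1$ bounds an embedded oriented surface $F\subset V_r$. For $r$ sufficiently large, the order $N_r:=(m+1)^r-m^r$ of $x_1$ in $H_1(\Sigma_r)$ is coprime to $N$ and to any fixed torsion in $H_1(V_r)$, so $k\widehat{x_1}\in H^2(\Sigma_r)$ extends over $V_r$ for every integer~$k$.

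Finally, for each $k$, use $F$ to build a \spinc\ structure $\mathfrak{s}_k$ on $V_r$ extending $\mathfrak{s}_{\Sigma_r}+k\widehat{x_1}$. By tube-summing $F$ with copies of the basis spheres of $H_2(V_r)$, one may adjust the intersection numbers of $c_1(\mathfrak{s}_k)$ with each basis sphere by even multiples, and one then chooses $\mathfrak{s}_k$ so that $c_1(\mathfrak{s}_k)\cdot e_i=\pm 1$ for every basis sphere~$e_i$. This yields $c_1(\mathfrak{s}_k)^2+b_2(V_r)=0$, and the Ozsv\'ath-Szab\'o inequality for negative definite bounds gives
\[
4\, d\bigl(\Sigma_r,\,\mathfrak{s}_{\Sigma_r}+k\widehat{x_1}\bigr) \geq c_1(\mathfrak{s}_k)^2+b_2(V_r) = 0,
\]
as claimed. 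The hardest step will be the arithmetic ``$r$ sufficiently large'': one must ensure that $\gcd(N_r, N)=1$ holds along some infinite sequence of $r$, which reduces to a density-of-primes statement (for example, via Zsygmondy's theorem applied to $(m+1)^r-m^r$) and is more delicate than the $m=1$ case treated in Cha-Kim, where $N_r=2^r-1$ and a single prime choice of $r$ suffices.
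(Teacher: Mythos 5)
Your overall plan — construct a definite $4$-manifold $V_r$ bounded by $\Sigma_r$ from the $1$-negaton $W$, push the metabolizer condition down to $V_r$, and deduce $d\ge 0$ from Ozsv\'ath-Szab\'o — is the same strategy the paper uses. The paper, however, constructs $V_r$ as the $r$-fold branched cyclic cover of $(W\cup(\text{2-handle}), \Delta)$, and then cites Cochran-Harvey-Horn's Theorem~6.5 as a black box to convert $x_1\in G:=\Ker\{H_1(\Sigma_r)\to H_1(V_r)\}$ into the $d$-invariant inequality; you are instead re-deriving that black box by hand, which is fine in spirit but leaves some things to check (that the lifted surfaces give an integral $\langle-1\rangle$-diagonal basis for $H_2(V_r;\Z)$, that a characteristic covector restricting to the given \spinc\ structure on $\Sigma_r$ can be chosen with all coordinates $\pm 1$, etc.).

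There are two genuine gaps in the middle step, which is where the real content of the lemma lies. First, the statement ``$x_1\in\ker\{H_1(\Sigma_r;\Q)\to H_1(V_r;\Q)\}$'' is vacuous: $\Sigma_r$ is a rational homology sphere, so $H_1(\Sigma_r;\Q)=0$. What must be proved is the integral statement $x_1\in\Ker\{H_1(\Sigma_r;\Z)\to H_1(V_r;\Z)\}$, and this does not follow formally from the metabolizer condition, which a priori only gives $a\cdot\alpha_J=0$ in $H_1(W;\Z[t^{\pm1}])$ for some nonzero integer $a$, hence only that $a\cdot x_1$ dies in $H_1(V_r;\Z)$. The paper's Assertion is precisely the bridge from $a\cdot x_1\in G$ to $x_1\in G$, and it is the nontrivial part of the proof. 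Second, the number-theoretic step is misattributed: Zsygmondy's theorem gives the \emph{existence} of a new (primitive) prime factor of $(m+1)^r-m^r$, not the \emph{absence} of the old ones, so it does not show $\gcd(a,(m+1)^r-m^r)=1$. The paper uses a much simpler argument: choose $r$ to be a prime at least as large as every prime factor of~$a$; if a prime $q\mid a$ (with $q\nmid m(m+1)$) also divided $(m+1)^r-m^r$, then $u:=m^*(m+1)$ would satisfy $u^r\equiv 1$ and $u^{q-1}\equiv 1\pmod q$ with $u\not\equiv 1$, forcing $r\mid q-1$ and hence $r<q$, a contradiction. You should replace the Zsygmondy reference with this Fermat's-little-theorem argument (or with a density argument over residues modulo $\operatorname{ord}_q(u)$ for each $q\mid a$, which also works but is heavier).
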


\begin{proof}
  Attach a 2-handle to a given $1$-negaton~$W$, along the zero-framed meridian
  of $K_0$, to obtain a 4-manifold which we temporarily call~$V$.  Note that
  $\partial V = S^3$ and the cocore of the 2-handle is a slicing disk
  $\Delta\subset V$ bounded by~$K_0$.  Take the $r$-fold branched cyclic cover
  of $(V,\Delta)$, and call it~$V_r$.  Here the $r$-fold branched cyclic cover
  is defined since, using that $W$ is a $1$-negaton, we see that $H_1(W\sm
  \Delta)=\Z$ generated by a meridian of~$\Delta$.  Indeed, if we denote by
  $W_r$ the $r$-fold cyclic cover of $W$, then $V_r$ is obtained by attaching a
  2-handle to~$W_r$. The 4-manifold $V_r$ is bounded by~$\Sigma_r$.  

  The first key step is to relate the hypothesis $P=\langle\alpha_J\rangle$,
  which is associated with the infinite cyclic cover, to the kernel
  \[
    G:=\Ker\{H_1(\Sigma_r)\to H_1(V_r)\}
  \]
  associated with finite covers.  The following is a modified version
  of~\cite[Lemma~5.2]{Cha-Kim:2017-1}.

  \begin{assertion*}
    Under the hypothesis that $P=\langle\alpha_J\rangle$, $x_1\in G$ for all
    large primes~$r$.
  \end{assertion*}

  Although we do not use it, we remark that the assertion implies that
  $G=\langle x_1\rangle$, since it is known that $|G|$ is equal to
  $|H_1(\Sigma_m)|^{1/2} = (m+1)^r-m^r$.

  \begin{proof}[Proof of Assertion]
    
    Let $E_r$ be the $r$-fold cyclic cover of the exterior~$E(K_0)$ for $r \le
    \infty$.  Consider the following commutative diagram:
    \[
      \begin{tikzcd}[column sep=small]
        H_1(M(K_0);\Z[t^{\pm1}]) \ar[r,equal]
        &[-1ex] H_1(E_\infty) \ar[r]\ar[d]
        & H_1(W_\infty) \ar[r,equal]\ar[d]
        &[-1ex] H_1(W;\Z[t^{\pm1}]) \ar[r]
        & H_1(W;\Q[t^{\pm1}])
        \\
        & H_1(E_r) \ar[d] \ar[r]
        & H_1(W_r) \ar[d]
        \\
        & H_1(\Sigma_r) \ar[r]
        & H_1(V_r)
      \end{tikzcd}
    \]
    The vertical arrows are induced by coverings and inclusions.  Since
    $\alpha_J$ lies in the kernel of the composition of the top row by the
    hypothesis, $\alpha_J$ is $\Z$-torsion in $H_1(W;\Z[t^{\pm1}])$. That is,
    $a\cdot \alpha_j=0$ in $H_1(W;\Z[t^{\pm1}])$ for some nonzero~$a\in\Z$.  By
    the above diagram, it follows that $a\cdot x_1 \in H_1(\Sigma_r)$ lies in
    the kernel $G$ of $H_1(\Sigma_r)\to H_1(V_r)$.

    Suppose $r$ is a prime not smaller than any prime factor of~$a$.  Under this
    assumption, we claim that $\gcd(a, (m+1)^r - m^r)=1$. From this it follows
    that $x_1$ lies in $G$, since $x_1$ has order $(m+1)^r - m^r$ in
    $H_1(\Sigma_r)$. This proves the assertion, modulo the proof of the claim.

    To show the claim, it suffices to show that every prime factor $q$ of~$a$ is
    relatively prime to $(m+1)^r - m^r$.  It is obviously true, if $q\mid m$ or
    $q\mid m+1$.  So, suppose $q$ divides neither $m$ nor~$m+1$ but $q$ divides
    $(m+1)^r - m^r$.  Let $u=m^*(m+1)$, where $m^*$ is an arithmetic inverse of
    $m \bmod q$.  We have $u^r \equiv 1 \bmod q$ by the hypothesis, and $u^{q-1}
    \equiv 1 \bmod q$ by Fermat's little theorem.  Since $r$ is a prime and
    $u\not\equiv 1 \bmod q$, it follows that $r\mid q-1$.  This contradicts the
    assumption that $r\ge q$. This completes the proof of the claim.
  \end{proof}

  The assertion enables us to invoke a result of Cochran, Harvey and
  Horn~\cite[Theorem~6.5]{Cochran-Harvey-Horn:2012-1}, which says the following:
  if $W$ is a $1$-negaton bounded by $M(K_0)$, then $d(\Sigma_r,\widehat x)\ge
  0$ for all $x$ lying in $G=\Ker\{H_1(\Sigma_r) \to H_1(V_r)\}$.  Applying this
  to $x=k\cdot x_1$, the proof of Lemma~\ref{lemma:d-invariant-obstruction} is
  completed.
\end{proof}

\begin{theorem}
  \label{theorem:d-invariant-Sigma_r}
  Let $m\ge 1$ is an odd integer and $r\ge 1$ is an odd prime power.  Let $k$ be
  the arithmetic inverse of $2 \bmod (m+1)^r-m^r$.  Then $d(\Sigma_r,
  \mathfrak{s}_{\Sigma_r} + k\widehat x_1) \le -\frac32$.
\end{theorem}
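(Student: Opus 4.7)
The plan is to exhibit $\Sigma_r$, in a suitable orientation, as the boundary of a negative-definite plumbed $4$-manifold and then apply N\'emethi's algorithm \cite{Nemethi:2005-1}, which computes the correction terms of such manifolds combinatorially from the weighted graph. The bound $d \le -3/2$ should then fall out of an explicit evaluation of that combinatorial quantity at the characteristic covector corresponding to $\mathfrak{s}_{\Sigma_r}+k\widehat{x_1}$.

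To obtain the plumbing, I would use the satellite description $K_0 = R(U,U)(\alpha_D, \Wh^+(T))$. The ribbon seed $R(U,U)$ has Seifert matrix $\sbmatrix{0 & m+1\\m & 0}$, so its $r$-fold branched cover is readily described as a plumbed manifold; meanwhile $\Wh^+(T)$ is the positive Whitehead double of the right-handed trefoil $T = T(2,3)$, whose exterior is Seifert fibered. Assembling, $\Sigma_r$ is obtained from $\Sigma_r(R(U,U))$ by excising tubular neighbourhoods of the $r$ lifts of $\alpha_D$ and gluing in $r$ copies of the exterior of $\Wh^+(T)$. Each piece carries its own plumbing graph, and together they combine into an explicit plumbing presentation of $\Sigma_r$, which, after any necessary reorientation or handle manipulation, should be negative definite.

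The next step is to track the \spinc structure $\mathfrak{s}_{\Sigma_r}+k\widehat{x_1}$ through this description. The parity hypotheses ($m$ odd, $r$ an odd prime power) make $(m+1)^r-m^r$ odd, so $k = 2^{-1}\bmod(m+1)^r-m^r$ is well defined; this choice should correspond to a distinguished ``midpoint'' characteristic covector in the plumbing lattice. N\'emethi's algorithm presents $d$ as an optimum of a combinatorial function of characteristic covectors, and evaluating at the chosen covector should yield the explicit upper bound~$-3/2$.

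The principal obstacle is the explicit bookkeeping: producing a plumbing graph with the correct definiteness, locating the class $\widehat{x_1}$ inside the associated lattice, and running N\'emethi's algorithm sharply enough to pin down $-3/2$ rather than settling for a weaker bound. The nontriviality of the trefoil is essential here: if $\Wh^+(T)$ were replaced by an unknot, $K_0$ would be ribbon and its branched covers would bound rational homology balls, forcing $d=0$ on the relevant \spinc structures. So the negativity of the bound must be traceable to the trefoil's torus-knot plumbing, and obtaining exactly $-3/2$ should pin down which Seifert invariants of $T=T(2,3)$ are responsible.
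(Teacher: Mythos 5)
The central step of your plan fails at the outset: $\Sigma_r$ is \emph{not} a plumbed (graph) manifold, so N\'emethi's algorithm cannot be applied to it directly. Your own decomposition shows why. You write $\Sigma_r$ as the branched cover of $(S^3,R(U,U))$ with $r$ copies of the exterior of $D=\Wh^+(T)$ glued in along the lifts of $\alpha_D$. But the exterior of the Whitehead double of any knot is a union (along a torus) of the companion exterior and the Whitehead link exterior, and the Whitehead link exterior is hyperbolic. In particular, contrary to what you assert, $E(\Wh^+(T))$ is not Seifert fibered, and $\Sigma_r$ has hyperbolic pieces in its JSJ decomposition. No amount of reorientation or handle manipulation will produce a (negative definite) plumbing tree bounding $\Sigma_r$, so there is nothing for N\'emethi's formula to be evaluated on.

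The paper's proof avoids this by never attempting to describe $\Sigma_r$ itself as a plumbed manifold. Instead it constructs a negative definite cobordism $W$ from $\Sigma_r$ to a connected sum $Y_r = A\# B\# (r-3)L_{2m+1,1}$ of explicit small pieces, equips $W$ with a suitable \spinc{} structure restricting to $\mathfrak{s}_{\Sigma_r}+k\widehat{x_1}$ on $\Sigma_r$, and invokes the Ozsv\'ath--Szab\'o $d$-invariant inequality for definite $4$-manifolds. This reduces the problem to bounding $d(A,\cdot)$, $d(B,\cdot)$ and $d(L_{2m+1,1},\mathfrak{s})$. The manifold $-B$ turns out to be an honest Seifert 3-manifold, and the lens space value is standard, so N\'emethi's method and the lens space formula handle these. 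For $A$, which is where the trefoil and the Whitehead double ultimately enter, the paper uses a further negative definite cobordism from $A\#A'$ (with $A'=(m+6)$-surgery on $T\#-D$) to a Seifert manifold $Y$ (Lemma~\ref{lemma:d-invariant-Y}), handling the contribution of $A'$ not via N\'emethi but via the comparison $d(A',\cdot)=d(L_{m+6},\cdot)$ coming from \cite{Cochran-Harvey-Horn:2012-1} and \cite{Hedden-Kim-Livingston:2016-1}. So the Whitehead double's contribution is bypassed rather than fed into a plumbing calculus. Your intuition that the trefoil's nontriviality is essential is correct — if $D$ were slice, $K_0$ would be slice and all relevant $d$-invariants would vanish — but that contribution is extracted through $L^2$-signatures and the $A'$ comparison in the rest of Section~\ref{section:d-invariant-estimate}, not through Seifert invariants of $T(2,3)$ fed into N\'emethi's formula on $\Sigma_r$.
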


On the other hand, Lemma~\ref{lemma:d-invariant-obstruction} says that
$d(\Sigma_r, \mathfrak{s}_{\Sigma_r} + k\widehat x_1)$ must be non-negative.
This contradiction implies that the kernel $P$ defined
in~\eqref{equation:definition-of-P} cannot be equal to~$\langle\alpha_J\rangle$.
This completes the proof of
Theorem~\ref{theorem:linear-combination-non-negativity}.

For $m=1$, Theorem~\ref{theorem:d-invariant-Sigma_r} was already shown
in~\cite[Theorem~5.4]{Cha-Kim:2017-1}.  (We remark that the symbol $m$
in~\cite{Cha-Kim:2017-1} denotes our~$r$.) So, in the remaining part of this
paper, we will assume that $m>1$.

\begin{proof}[Proof of Theorem~\ref{theorem:d-invariant-Sigma_r}] 
  
  Let $A$ and $B$ be the 3-manifolds given by the surgery presentations in
  Figures~\ref{figure:surgery-diagram-A-B}.  Let $Y_r$ be the connected sum of
  $A$, $B$ and $r-3$ copies of the lens space $L_{2m+1,1}$. (We use the
  orientation convention that $L_{p,1}$ is the $p$-framed surgery on the trivial
  knot $U$ in~$S^3$.)  Then, arguments in~\cite[Section~6.1]{Cha-Kim:2017-1}
  construct a negative definite 4-manifold  $W$ with $\partial W = Y_r \sqcup
  -\Sigma_r$ and $b_2(W)=(2m+1)r-4m+1$, and construct a \spinc\ structure
  $\mathfrak{t}$ on $W$ such that $c_1(\mathfrak{t})^2 = -r$,
  $c_1(\mathfrak{t}|_{\Sigma_r}) = \widehat{x_1}$ and $c_1(\mathfrak{t}|_{Y_r})
  = 0$.  Indeed, \cite[Section~6.1]{Cha-Kim:2017-1} is the case of $m=1$.  We do
  not repeat the details here, since exactly the same method works under our
  assumption that $m\ge 1$ is odd.  Perhaps the least obvious part is that the
  inverse matrix $P^{-1}$ in \cite[Eq.~(6.10)]{Cha-Kim:2017-1} should be
  replaced with a block matrix $P^{-1}=(R_{ij})_{1\le i,j\le r-1}$ with
  \[
    R_{ij} = \frac{((m+1)^i-m^i)((m+1)^{r-j}-m^{r-j})}{(m+1)^r-m^r}
    \begin{bmatrix} 0 & (m+1)^{i-j} \\ m^{i-j} & 0 \end{bmatrix}
    \text{ for } i\ge j,
  \]
  and $R_{ij}=R^T_{ji}$ for $i<j$.

  \begin{figure}[H]
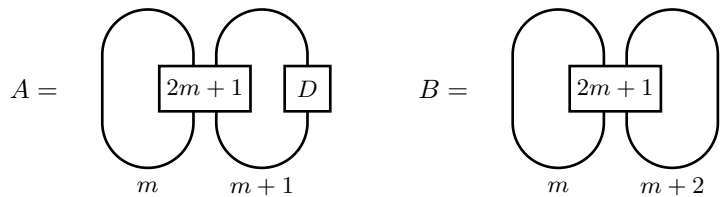

    \includestandalone{surgery-diagram-A-B}
    \caption{The 3-manifolds~$A$ and~$B$.
    The box \,{\fboxsep=2pt\fboxrule=.9pt\fbox{\footnotesize$2m+1$}}\,
    represents $2m+1$ right handed full
    twists between vertical strands.}
    \label{figure:surgery-diagram-A-B}
  \end{figure}
    
  By applying Ozsv\'ath-Szab\'o's $d$-invariant
  inequality~\cite[Theorem~9.6]{Ozsvath-Szabo:2003-2} to the negative definite
  4-manifold $W$, and by using additivity of the $d$-invariant under connected
  sum, we have
  \begin{equation*}
    \begin{aligned}
      d(\Sigma_r,\mathfrak{t}|_{\Sigma_r}) 
      & \le d(Y_r,\mathfrak{t}|_{Y_r}) - \frac{c_1(\mathfrak{t})^2 + b_2(W)}{4}
      \\
      & = d(A,\mathfrak{t}|_{A}) + d(B,\mathfrak{t}|_{B})
      + (r-3)d(L_{2m+1,1},\mathfrak{t}|_{L_{2m+1,1}}) - \frac{2mr-4m+1}{4}.
    \end{aligned}
  \end{equation*}  
  Since $c_1(\mathfrak{t}|_{Y_r})=0$, we have $\mathfrak{t}|_{L_{2m+1,1}}=
  \mathfrak{s}_{L_{2m+1,1}}$.  By a $d$-invariant formula for lens spaces given
  in \cite[Proposition~4.8]{Ozsvath-Szabo:2003-2},
  $d(L_{2m+1,1},\mathfrak{s}_{L_{2m+1,1}})=m/2$. By
  Lemmas~\ref{lemma:d-invariant-A} and~\ref{lemma:d-invariant-B}, which we will
  prove below, we have $d(A,\mathfrak{t}|_A) \le (m-7)/4$ and
  $d(B,\mathfrak{t}|_B) \le (m+2)/4$.  Combine them with the above inequality,
  to obtain $d(\Sigma_r, \mathfrak{t}|_{\Sigma_r}) \le -\frac32$.  Observe that
  $c_1(\mathfrak{t}|_{\Sigma_r})=\widehat{x_1} = 2k \widehat{x_1} =
  c_1(\mathfrak{s}_{\Sigma_r} + k \widehat{x_1})$, since $2k\equiv 1\bmod
  (m+1)^r-m^r$ and $x_1$ has order $(m+1)^r-m^r$.  It follows that
  $\mathfrak{t}|_{\Sigma_r} = \mathfrak{s}_{\Sigma_r}$, since $\Sigma_r$ is a
  $\Z_2$-homology sphere.
\end{proof}

Note that $|H_1(A)| = \bigl|\det\sbmatrix{m & -2m-1 \\ -2m-1 & m+1}\bigr| =
3m^2+3m+1$ is odd, so there is a unique \spinc\ structure $\mathfrak{s}$ of $A$
such that $c_1(\mathfrak{s}) = 0$.

\begin{lemma}
  \label{lemma:d-invariant-A}
  For the \spinc\ structure $\mathfrak{s}$ on $A$ with $c_1(\mathfrak{s}) = 0$,
  $d(A,\mathfrak{s}) \le (m-7)/4$.
\end{lemma}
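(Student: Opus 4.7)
The plan is to bound $d(A,\mathfrak{s})$ by realizing $A$ (with appropriate orientation) as the boundary of an explicit negative definite plumbed $4$-manifold and then invoking either Ozsv\'ath--Szab\'o's $d$-invariant inequality or N\'emethi's algorithmic description of $HF^-$ for such manifolds from \cite{Nemethi:2005-1}. The raw surgery presentation of $A$ in Figure~\ref{figure:surgery-diagram-A-B} gives a $4$-manifold whose linking matrix has trace $2m+1>0$ and determinant $-(3m^2+3m+1)<0$, so it is indefinite and cannot be used directly.

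First I would unwind the $2m+1$-twist box via handle slides, turning the two-component surgery into a linear chain (or star-shaped tree) of framed unknots with all framings negative. This presents $A$, up to orientation, as a Seifert fibered rational homology sphere, and the associated plumbing matrix $P$ is negative definite. Since $|H_1(A)|=3m^2+3m+1$ is odd, there is a unique \spinc structure $\mathfrak{s}$ with $c_1(\mathfrak{s})=0$; in the plumbing description this corresponds to a specific characteristic covector, and the $d$-invariant inequality then gives
\[
  d(A,\mathfrak{s}) \;\le\; \frac{c_1(\mathfrak{t})^2 + b_2(W)}{4}
\]
for any extension $\mathfrak{t}$ to the negative definite plumbing~$W$ (after reversing orientation if needed).

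Second, I would evaluate $c_1(\mathfrak{t})^2$ by computing $-K^T P^{-1} K$ for an explicit characteristic vector $K$ compatible with $\mathfrak{s}$. The inverse plumbing matrix has a block structure with entries depending on $m$ in exactly the same manner as the matrix $R_{ij}$ appearing in the proof of Theorem~\ref{theorem:d-invariant-Sigma_r}, so the same bookkeeping carries over. Choosing the optimal characteristic vector (or more efficiently, using N\'emethi's graded-root minimization to identify it), one obtains a closed form from which the bound $(m-7)/4$ follows by direct simplification, specialising to the known value $-3/2$ when $m=1$ (as in \cite[Section~6]{Cha-Kim:2017-1}).

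The main obstacle is the combinatorial computation: one must track the dependence on $m$ uniformly, both in identifying the plumbing and its characteristic vectors and in bounding $K^T P^{-1} K$. The case $m=1$ is the content of \cite{Cha-Kim:2017-1}, so what is genuinely new here is the extension to arbitrary odd $m$, and the key is to exhibit a single characteristic vector (or a $m$-parametrised family) for which the arithmetic collapses neatly to give the linear-in-$m$ upper bound $(m-7)/4$. All other steps are structural and follow standard plumbing/$d$-invariant machinery.
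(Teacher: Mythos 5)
Your plan assumes that the surgery presentation of $A$ in Figure~\ref{figure:surgery-diagram-A-B} can be unwound by handle slides into a star-shaped plumbing of framed \emph{unknots}, so that $A$ (up to orientation) is Seifert fibered and bounds a negative definite plumbed $4$-manifold to which N\'emethi's machinery applies. That premise is false. One of the two surgery components of $A$ is a nontrivial knot coming from $D=\Wh^+(T)$, the positive Whitehead double of the right-handed trefoil, inherited from the branched cover $\Sigma_r$ of $(S^3,K_0)$ with $K_0=R(U,D)$. No amount of blowing up and down the $(2m{+}1)$-twist box will turn that component into an unknot, so $A$ is not a plumbed $3$-manifold and your ``unwind to a linear chain/star-shaped tree'' step does not exist. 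This is precisely why the paper treats $A$ completely differently from the genuine Seifert pieces $Y$ and $B$: it does not compute $d(A,\mathfrak s)$ directly.

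The actual argument introduces the auxiliary manifold $A'$, the $(m{+}6)$-surgery on $T\#-D$, and constructs a negative definite cobordism $V\cup V'$ from $A\# A'$ to the Seifert manifold~$Y$. The point of taking the connected sum is that the knotted component of $A$ and the $T\#-D$ component of $A'$ cancel up to concordance after a handle slide, which is what makes the target $Y$ a bona fide plumbed manifold amenable to Lemma~\ref{lemma:d-invariant-Y}. One then needs the additional input that $d(A',\mathfrak t|_{A'})=d(L_{m+6},\mathfrak s_{L_{m+6}})$, which uses the special Heegaard Floer properties of Whitehead doubles (the Cochran--Harvey--Horn and Hedden--Kim--Livingston techniques cited in the proof) and has no counterpart in your outline. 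Only after assembling the Ozsv\'ath--Szab\'o inequality for $V\cup V'$ with the estimates for $d(Y,\cdot)$ and $d(A',\cdot)$ does the bound $(m-7)/4$ emerge. Your proposal skips the cancellation-by-connected-sum idea and the $d(A')$ computation entirely, and the step it substitutes (direct plumbing of $A$) cannot be carried out.
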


For the case of $B$, since $|H_1(B)| = \bigl|\det\sbmatrix{m & -2m-1 \\ -2m-1 &
m+2} \bigr| = 3m^2+2m+1$ is even, there are exactly two \spinc\ structures
$\mathfrak{s}$ of $B$ such that $c_1(\mathfrak{s}) = 0$.

\begin{lemma}
  \label{lemma:d-invariant-B}
  If $\mathfrak{s}$ is a \spinc\ structure of $B$ such that $c_1(\mathfrak{s}) =
  0$, then $d(B,\mathfrak{s}) \le (m+2)/4$.
\end{lemma}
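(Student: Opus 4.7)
The plan is to establish the upper bound by applying Ozsv\'ath-Szab\'o's $d$-invariant inequality \cite[Theorem~9.6]{Ozsvath-Szabo:2003-2} to a definite 4-manifold with boundary $\pm B$, mirroring the strategy used for Lemma~\ref{lemma:d-invariant-A}. The surgery matrix $\sbmatrix{m & 2m+1 \\ 2m+1 & m+2}$ of $B$ is indefinite (its determinant is $-3m^2 - 2m - 1$), so neither orientation of the surgery 4-manifold in Figure~\ref{figure:surgery-diagram-A-B} is itself definite. First I would perform Kirby calculus — most naturally, blowing up inside the band of $2m+1$ right-handed full twists to unlink the two surgery components — to obtain a plumbing description of $B$ (or $-B$) whose intersection form, after the orientation reversal if needed, is definite. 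This generalizes the construction used for the $m=1$ case in \cite[Section~6.2]{Cha-Kim:2017-1}.

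Given such a definite plumbed 4-manifold $W$ bounding $\pm B$, the $d$-invariant inequality bounds $d(B, \mathfrak{s})$ in terms of $(c_1(\mathfrak{t})^2 \pm b_2(W))/4$ for any \spinc\ structure $\mathfrak{t}$ on $W$ extending $\mathfrak{s}$. Since $|H_1(B)| = 3m^2 + 2m + 1$ is even, there are exactly two \spinc\ structures $\mathfrak{s}$ on $B$ with $c_1(\mathfrak{s}) = 0$, and I would treat both by writing down explicit characteristic covectors on the intersection lattice of $W$ that restrict to the appropriate spin structures on the boundary, and then evaluating the resulting bound. N\'emethi's combinatorial computation of $HF^+$ for negative definite plumbed 3-manifolds \cite{Nemethi:2005-1}, invoked elsewhere in the paper, provides a systematic framework for this characteristic-covector analysis.

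The main obstacle will be the characteristic-covector optimization: the number of extensions $\mathfrak{t}$ grows with $b_2(W)$, which scales linearly in $m$, and it is not obvious a priori which $\mathfrak{t}$ yields the smallest bound, nor that the optimum is exactly $(m+2)/4$. Carrying this out for arbitrary odd $m$ will likely require an explicit ansatz for the optimal characteristic covector — constructed by distributing the boundary data across the vertices of the plumbing graph according to the arithmetic of $m$, $m+2$, and $2m+1$ — or a careful invocation of N\'emethi's algorithm to isolate the contribution producing the bound $(m+2)/4$. The small gap between the claimed bound here and the bound $(m-7)/4$ of Lemma~\ref{lemma:d-invariant-A} should reflect only the difference in framings, $m+1$ versus $m+2$, of the two surgery presentations, and the two optimizations should run along parallel lines.
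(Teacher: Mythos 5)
Your high-level plan is aligned with the paper's: reverse the orientation so $-B$ bounds a negative definite plumbed 4-manifold $X$ (the paper uses a surgery-diagram calculation, Figure~\ref{figure:plumbing-diagram-B}, rather than blowups, but to the same effect), identify the two \spinc\ structures with $c_1 = 0$ by writing down explicit characteristic covectors on $H^2(X)$, and apply the Ozsv\'ath--Szab\'o inequality. However, you misjudge what the final step actually requires, and this is a substantive gap. The Ozsv\'ath--Szab\'o inequality for a negative definite $X$ bounding $-B$ gives a \emph{lower} bound $d(-B,\mathfrak{t}|_{-B}) \ge (c_1(\mathfrak{t})^2 + b_2(X))/4$ for \emph{every} extension $\mathfrak{t}$, hence an upper bound on $d(B,\cdot)$. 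There is therefore no optimization over extensions to perform: you only need a \emph{single} characteristic covector with the right restriction for each of the two relevant \spinc\ structures, and any such covector whose square is at least $-3m-2$ already yields the claimed estimate. The paper exhibits two explicit classes $x_1 = b_{1,1}+b_{1,3}+\cdots+b_{1,m}+b_{3,1}$ and $x_2 = b_{2,1}+b_{2,3}+\cdots+b_{2,m-2}+b_{3,1}$, sets $k_i = \lambda(x_i)$, and directly computes $k_1^2 = -3m-2$, $k_2^2 = -3m$, which with $b_2(X) = 2m$ gives the bound in two lines. Your worry that ``the number of extensions $\mathfrak{t}$ grows with $b_2(W)$\ldots\ and it is not obvious a priori which $\mathfrak{t}$ yields the smallest bound'' is a misreading of the direction of the inequality.

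Relatedly, you invoke N\'emethi's algorithm as if it were needed here; the paper explicitly forgoes it for Lemma~\ref{lemma:d-invariant-B}, noting that N\'emethi's method would determine $d(B,\mathfrak{s})$ exactly but that only the one-sided Ozsv\'ath--Szab\'o estimate is required. You also predict the argument should ``run along parallel lines'' to Lemma~\ref{lemma:d-invariant-A}, but it does not: Lemma~\ref{lemma:d-invariant-A} is considerably more involved (it requires an auxiliary cobordism to a Seifert manifold $Y$ and a genuine N\'emethi computation via Lemma~\ref{lemma:d-invariant-Y}), whereas the proof of Lemma~\ref{lemma:d-invariant-B} is essentially a direct application of the definite-manifold inequality.
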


\begin{proof}[Proof of Lemma~\ref{lemma:d-invariant-A}] Let $A'$ be the
  3-manifold obtained by $(m+6)$-surgery on the knot $T\#-D$.  See
  Figure~\ref{figure:cobordism-from-A}.  Let $Y$ be the 3-manifold given by the
  last surgery diagram in Figure~\ref{figure:cobordism-from-A}. The four
  diagrams in Figure~\ref{figure:cobordism-from-A} describe a cobordism from
  $A\# A'$ to~$Y$.  More precisely, start by taking ${(A\# A') \times I}$.
  Attach a 2-handle, to obtain a cobordism, say $V$, from $A\#A'$ to the second
  surgery diagram in Figure~\ref{figure:cobordism-from-A}.  Apply handle slide,
  to change the second surgery diagram to the third.  Observe that, in the third
  surgery diagram, the two components with framing $m+6$ and $0$ give a $S^3$
  summand, while the two components with framing $m$ and $2m+7$ form a link
  concordant to the link in the last surgery diagram, which describes~$Y$.  It
  follows that there is a homology cobordism, say $V'$, from the third surgery
  diagram to~$Y$.  Now $V\cup V'$ is a cobordism from $A\#A'$ to~$Y$.

  \begin{figure}[t]
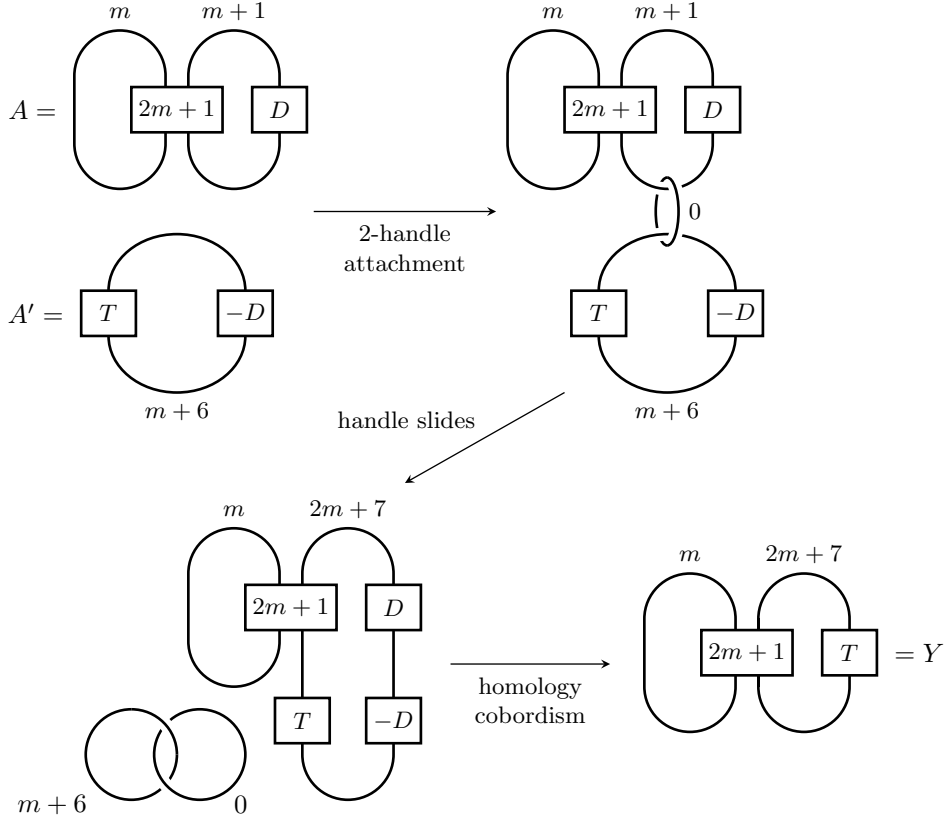

    \includestandalone{cobordism-from-A}
    \caption{A cobordism from~$A\# A'$ to~$Y$.}
    \label{figure:cobordism-from-A}
  \end{figure}

  We claim that $V$ is negative definite.  To see this, view the surgery diagram
  of the 3-manifold $A\# A'$ as a Kirby diagram of a 4-manifold~$V_0$.  That is,
  $V_0$ consists of one 0-handle and three 2-handles attached along the framed
  link diagram of $A\# A'$ in Figure~\ref{figure:cobordism-from-A}.  We have
  $\partial V_0=A\# A'$.  The second diagram in
  Figure~\ref{figure:cobordism-from-A}, or equivalently the third diagram, is a
  Kirby diagram of the 4-manifold $V_0\cup_{A\# A'} V$.  The Kirby diagram of
  $V_0$ has linking matrix
  \[
    L = \begin{bmatrix}
      m & -2m-1 & 0 \\ -2m-1 & m & 0 \\ 0 & 0 & m+6
    \end{bmatrix}
  \]
  which has signature~$1$ since $m>0$ and the top upper $2\times 2$ submatrix
  has negative determinant.
  It follows that $\sign V_0 = 1$.  The third diagram in
  Figure~\ref{figure:cobordism-from-A} has linking matrix
  \[
    \begin{bmatrix}
      m & -2m-1 & 0 & 0\\ -2m-1 & 2m+7 & 0 & 0 \\
      0 & 0 & 0 & -1 \\ 0 & 0 & -1 & m+6
    \end{bmatrix}
  \]
  which has vanishing signature.  So $\sign V_0\cup V = 0$.  By Novikov
  additivity, it follows that $\sign V=-1$.  Since $b_2(V)=1$, this proves the
  claim that $V$ is negative definite.

  Since $V'$ is a homology cobordism, $H_*(V\cup V')=H_*(V)$.  Consequently
  $V\cup V'$ is negative definite.
  
  We will construct a generator of $H_2(V\cup V') = H_2(V)$ and use it to
  describe a certain \spinc\ structure on~$V\cup V'$.  Let $\sigma$ be the core
  of the 2-handle of~$(V,A\#A')$, and let $\alpha=\partial\sigma$ be its
  attaching circle, which lies in~$A\#A'$. See the second diagram in
  Figure~\ref{figure:cobordism-from-A}, in which $\alpha$ is the zero-framed
  circle.  Since the linking matrix $L$ is a presentation for $H_1(A\#A')$, it
  is seen that $H_1(A\#A')=\Z_{3m^2+3m+1}\oplus \Z_{m+6}$, and $\alpha = (1,1)$
  in $H_1(A\#A')$.  So, the order of $\alpha$ in  $H_1(A\#A')$ is
  $(3m^2+3m+1)(m+6)/d$, where $d:=\gcd(3m^2+3m+1, m+6)$.  (Indeed, it can be
  seen that $d$ is either $91$ or~$1$.)  From this, it follows that there is a
  2-cycle $z$ in $A\#A'$ such that
  \[
    \partial z= \frac{(3m^2+3m+1)(m+6)}{d}\cdot \alpha.
  \]    
  Moreover, the 2-chain
  \[
    E:= \frac{(3m^2+3m+1)(m+6)}{d} \cdot \sigma - z
  \]
  is a generator of $H_2(V)=\Z$, by a standard Mayer-Vietoris argument for the
  2-handle attachment.

  The self-intersection number $E\cdot E$ is equal to the intersection number
  (in $A\#A'$) of $z$ and a pushoff of $\partial z$, say $\partial z'$, taken
  along the 2-handle attachment framing (which is the zero framing in
  Figure~\ref{figure:cobordism-from-A}). So $E\cdot E$ is equal to the linking
  number of $\partial z$ and its pushoff in the rational homology
  sphere~$A\#A'$.  In addition, the linking number can be computed using the
  linking matrix~$L$ (for instance, see~\cite[Theorem~3.1]{Cha-Ko:2000-1}):
  \[
    \begin{aligned}
      E\cdot E &= \lk_{A\#A'}(\partial z,\partial z')
      = \frac{(3m^2+3m+1)^2(m+6)^2}{d^2} \cdot
      \begin{bmatrix} 0 & -1 & 1 \end{bmatrix} L^{-1}
      \begin{bmatrix} 0 \\ -1 \\ 1 \end{bmatrix}
      \\
      & = \frac{(3m^2+3m+1)(m+6)(-2m^2+3m-1)}{d^2}.
    \end{aligned}
  \]
  Since the factor $-2m^2+3m-1$ of the numerator is even and $d$ is odd, $E\cdot
  E$ is even.  From this, it follows that $0\in H^2(V\cup V')$ is
  characteristic. Therefore there is a \spinc\ structure $\mathfrak{t}$ on
  $V\cup V'$ such that $c_1(\mathfrak{t})=0$.  By the Ozsv\'ath-Szab\'o
  inequality~\cite[Theorem~9.6]{Ozsvath-Szabo:2003-2}, we have
  \[
      d(Y,\mathfrak{t}|_Y) - d(A,\mathfrak{t}|_A) - d(A',\mathfrak{t}|_{A'})
      \ge \frac 14.
  \]
  Note that $A'$ is the $(m+6)$-surgery on the knot $T\# -D$, and
  $c_1(\mathfrak{t}|_{A'})=0$ since $c_1(\mathfrak{t})=0$.   By techniques
  of~\cite[p.~2150-2151]{Cochran-Harvey-Horn:2012-1} and
  \cite[Appendix~A]{Hedden-Kim-Livingston:2016-1}, $d(A',\mathfrak{t}|_{A'}) =
  d(L_{m+6},\mathfrak{s}_{L_{m+6}})$.  By Ozsv\'ath-Szab\'o's formula for lens
  spaces~\cite[Proposition~4.8]{Ozsvath-Szabo:2003-2}, we have
  $d(L_{m+6},\mathfrak{s}_{L_{m+6}}) = (m+5)/4$.
  By Lemma~\ref{lemma:d-invariant-Y}, which we will prove below, we have
  $d(Y,\mathfrak{t}|_Y) \le (2m-1)/4$.  (Note that $c_1(\mathfrak{t}|_Y)=0$
  since $c_1(\mathfrak{t})=0$.)  Combining these with the above inequality, it
  follows that $d(A,\mathfrak{s}) \le (m-7)/4$.
\end{proof}

\subsection{A quick summary of N\'emethi's method for Seifert 3-manifolds}
\label{subsection:nemethi-method}

For the reader's convenience, we provide a summary of N\'emethi's method to
compute $d$-invariants~\cite{Nemethi:2005-1}, which we will use in
Section~\ref{subsection:d-invariant-Y-B}.
We focus on the case of Seifert
3-manifolds, which is treated in~\cite[Section~11]{Nemethi:2005-1},
while~\cite{Nemethi:2005-1} provides techniques for a larger class of certain
plumbed 3-manifolds.

Let $Y$ be a Seifert 3-manifold.  It is well known that $Y$ admits a surgery
presentation of a specific form, which is shown in the left of
Figure~\ref{figure:plumbing-graph}.  The associated star-shaped plumbing graph,
which is shown in the right of Figure~\ref{figure:plumbing-graph}, is often used
to describe the 3-manifold~$Y$, as the boundary of a plumbed 4-manifold~$X$: for
each vertex, take a disk bundle over a 2-sphere whose Euler number is the
integer decoration of the vertex.  For each edge, perform $+1$ plumbing between
the two disk bundles corresponding to the endpoints.  The result is a 4-manifold
$X$ with $\partial X=Y$. 

Let $\nu$ be the number of branches of the star-shaped graph in
Figure~\ref{figure:plumbing-graph}. In this subsection, we assume
that $\nu\ge 3$, and that $X$ is negative definite.  (We remark that not all
Seifert 3-manifolds $Y$ are described by a plumbing graph satisfying this
assumption.)

\begin{figure}[H]
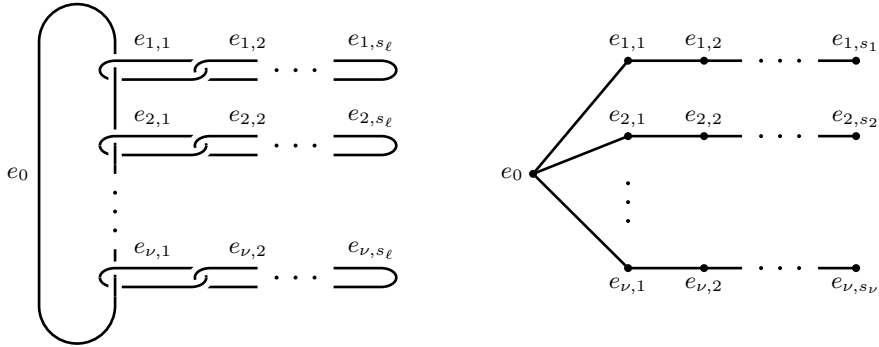

  \includestandalone{plumbing-graph}
  \caption{A Seifert 3-manifold and its plumbing graph.}
  \label{figure:plumbing-graph}
\end{figure}

We use the following notation. Let $e_0$ be the decoration of the root vertex.
Let $s_\ell$ be the number of (non-root) vertices on the $\ell$th branch, and
let $e_{\ell,1}$, \dots,~$e_{\ell,s_\ell}$ be the decorations of those $s_\ell$
vertices.  See Figure~\ref{figure:plumbing-graph}.  We may assume that
$e_{\ell,j}\le -2$ for all~$\ell$,~$j$ (for instance see~\cite{Neumann:1981-1}).
Let  $b_0$ and $b_{\ell,j} \in H_2(X)$ be the classes of 2-spheres corresponding
to vertices with decoration $e_0$ and $e_{\ell,j}$ respectively ($1\le \ell \le
\nu$, $1\le j\le s_\ell$).  They form a basis for the free abelian
group~$H_2(X)$. Let $b^*_0$ and $b^*_{\ell,j}\in H^2(X)$ be basis elements (hom)
dual to $b_0$ and~$b_{\ell,j}$. With respect to these bases, the intersection
form $\lambda\colon H_2(X)\times H_2(X) \to X$, or its adjoint $\lambda\colon
H_2(X) \to \Hom(H_2(X),\Z) = H^2(X)$ is given as follows:
$\lambda(b_0,b_0)=e_0$, $\lambda(b_{\ell,j}, b_{\ell,j}) = e_{\ell,j}$, and for
$b\ne b'$, $\lambda(b,b')=1$ if $\{b,b'\}=\{b_0,b_{\ell,1}\}$ or
$\{b_{\ell,j},b_{\ell,j+1}\}$, and $\lambda(b,b')=0$ otherwise.  For $1\le
\ell\le \nu$, define a continued fraction by
\[
  \frac{\alpha_\ell}{\omega_\ell} := [-e_{\ell,1},\ldots,-e_{\ell,s_\ell}] =  
  -e_{\ell,1} - \cfrac{1}{-e_{\ell,2} - \cfrac{1}{\,\,\,\vdots\,\,\,}}.
\]
The condition $e_{\ell,j}\le -2$ implies that $\alpha_\ell/\omega_\ell >1$.  So
we may assume $0\le \omega_\ell < \alpha_\ell$.  Then, a standard
diagonalization process applied to the intersection form $\lambda$ gives us a
diagonal matrix whose all diagonals are easily seen to be negative, possibly
except one which is equal to
\[
  e := e_0 + \sum_{\ell=1}^\nu \omega_\ell/\alpha_\ell.
\]
So, $X$ is negative definite if and only if $e<0$.

The set of characteristic elements in $H^2(X)$ is defined to be
\[
  \Char(X) := \{\xi \in H^2(X) \mid \xi(b_j)\equiv \lambda(b_j,b_j) \bmod 2
  \text{ for all } j\}.
\]
The Chern class $c_1\colon \Spinc(X) \to \Char(X)$ is bijective, since $H^2(X)$
does not have any 2-torsion.  Under the identification via $c_1$, the action of
an element $c\in H^2(X)$ on $\Spinc(X)$ is given by $\xi \mapsto \xi+2c$ for
$\xi\in \Char(X)=\Spinc(X)$. In particular, the action of $x\in H_2(X)$ on
$\Char(X)=\Spinc(X)$ via $\lambda\colon H_2(X)\to H^2(X)$ is given by $\xi
\mapsto \xi+2\lambda(x)$.

The Chern class $c_1\colon \Spinc(Y)\to H^2(Y)$ is not injective in general, so
the standard identification of \spinc\ structures of $Y$ is given indirectly
using~$X$: we have a bijection
\[
  \Spinc(Y) \approx \Char(X)/2\lambda(H_2(X)).
\]
Here, for a \spinc\ structure $\xi \in \Spinc(X)=\Char(X)$, the coset $[\xi] =
\xi+2\lambda(H_2(X))$ in $\Char(X)/2\lambda(H_2(X))$ corresponds to the
restriction of $\xi$ on~$Y$. Essentially, the bijectivity is a consequence of
the fact that $H^2(Y)$ is the cokernel of $\lambda\colon H_2(X) \to H^2(X)$.

In~\cite{Nemethi:2005-1}, the notion of a \emph{distinguished representative} is
used to express a \spinc\ structure of~$Y$.  Instead of the original definition
(see Section~5, especially Definition~5.1 of~\cite{Nemethi:2005-1}),  we will
use a characterization theorem as a definition.  We need the following notation.
For $1\le i\le j\le s_\ell$, let $n^\ell_{i,j}/d^\ell_{i,j} := [-e^\ell_i,
\ldots, -e^\ell_j]$ where $n^\ell_{i,j} > 0$ and $\gcd(n^\ell_{i,j},
d^\ell_{i,j}) = 1$. Note that $\alpha_\ell/\omega_\ell =
n^\ell_{1,s_\ell}/d^\ell_{1,s_\ell}$.  Define an element $K\in \Spinc(X) =
\Char(X)\subset H^2(X)$ by
\begin{equation}
  \label{equation:canonical-spinc-structure}
  K(b_0) = -e_0-2,\quad K(b_{\ell,j}) = -e_{\ell,j}-2
  \text{ for } 1\le \ell\le \nu,\, 1\le j\le s_\ell. 
\end{equation}
The element $K$ is characteristic since $K(b_0) \equiv \lambda(b_0,b_0)$ and
$K(b_{\ell,j}) \equiv \lambda(b_{\ell,j},b_{\ell,j}) \bmod 2$.
In~\cite{Nemethi:2005-1}, $K$ is called the \emph{canonical \spinc\ structure}.
We have $\Char(X) = K + 2H^2(X)$.  Consider a class $k_r\in \Char(X)$ of the
form
\begin{equation}
  \label{equation:k_r}
  k_r = K - 2 \biggl( a_0^\nstrut b^*_0
  + \sum_{\ell,j} a_{\ell,j}^\nstrut b_{\ell,j}^* \biggr)
\end{equation}
where $a_0$ and $a_{\ell,j}$ are integers.  Let
\begin{equation}
  \label{equation:a_ell}
  a_\ell = \sum_{t=1}^{s_\ell-1} n^\ell_{t+1,s_\ell} a_{\ell,t}.  
\end{equation}

\begin{definition}[{\cite[Corollary~11.7]{Nemethi:2005-1}}]
  \label{definition:distinguished-representative}
  The class $k_r$ in~\eqref{equation:k_r} called a \emph{distinguished
  representative} if $0\le a_\ell < \alpha_\ell$ for all~$\ell$, and if
  \begin{equation}
    \label{equation:cond-for-distinguished-repn}    
    0\le a_0 \le -1 -i e_0
    - \sum_{\ell=1}^\nu \Bigl[ \frac{i\omega_\ell + a_\ell}{\alpha_\ell} \Bigr]
  \end{equation}
  for all $i>0$.  Here $[x]$ is the largest integer not greater than~$x$.
\end{definition}

To state N\'emethi's formula for the $d$-invariant, we need one more notation.
Let
\begin{equation}
  \label{equation:tau(i)}
  \tau(i) = \sum_{t=0}^{i-1} \biggl(a_0+1-te_0 + \sum_{\ell=1}^\nu
  \Bigl[ \frac{-t\omega_\ell + a_\ell}{\alpha_\ell} \Bigr] \biggr).
\end{equation}
In particular, $\tau(0)=0$.

\begin{theorem}[{N\'emethi~\cite[p.~1038]{Nemethi:2005-1}}]
  \label{theorem:nemethi-formula}
  Suppose $X$ is negative definite and $\nu\ge 3$.  Suppose the class $k_r$
  given in~\eqref{equation:k_r} is a distinguished
  representative. Then, for the \spinc\ structure $[k_r]$ of $Y$, the
  $d$-invariant is given by
  \[
    d(Y,[k_r]) = \frac{k_r^2+b_2(X)}{4} - 2\cdot \min \{ \tau(i) \mid i\ge 0\}.
  \] 
\end{theorem}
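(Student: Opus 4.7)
The plan is to combine the Ozsv\'ath-Szab\'o plumbing algorithm for $HF^+$ with an explicit lattice computation adapted to star-shaped graphs. The starting point is the reformulation, valid since $X$ is negative definite,
\[
  d(Y,[k]) = \frac{k^2+b_2(X)}{4} - 2\min_{z\in\cL_+}\chi_k(z),
  \quad \chi_k(z) := -\tfrac{1}{2}\bigl(k(z)+\lambda(z,z)\bigr),
\]
where $k\in\Char(X)$ is any lift of $[k]\in\Spinc(Y)$ and $\cL_+$ is the Lipman cone of effective integer cycles in $H_2(X)$. This reduces Theorem~\ref{theorem:nemethi-formula} to the purely combinatorial identity $\min_{z\in\cL_+}\chi_{k_r}(z)=\min_{i\ge 0}\tau(i)$.

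First I would construct a distinguished computation sequence $z_0=0,z_1,z_2,\ldots$ in $\cL_+$ by an inductive Laufer-type rule: to pass from $z_{i-1}$ to $z_i$, first add $b_0$, then run arm by arm, adding $b_{\ell,j}$'s along the $\ell$th arm only when doing so weakly decreases $\chi_{k_r}$. Using $\alpha_\ell/\omega_\ell = [-e_{\ell,1},\ldots,-e_{\ell,s_\ell}]$ and standard continued-fraction identities, one checks that the multiplicities produced on the $\ell$th arm after $i$ increments of $b_0$ are controlled by $[(i\omega_\ell+a_\ell)/\alpha_\ell]$, which both explains the definition of $a_\ell$ in~\eqref{equation:a_ell} and forces the normalization $0\le a_\ell<\alpha_\ell$. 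Condition~\eqref{equation:cond-for-distinguished-repn} is then exactly the requirement that the coefficient of $b_0$ in $z_i$ remains nonnegative at every stage, i.e.\ that the algorithm does not terminate prematurely; this is precisely the content of $k_r$ being a \emph{distinguished representative}.

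Next I would telescope. A direct expansion using~\eqref{equation:k_r} and the plumbing intersection form gives
\[
  \chi_{k_r}(z_i)-\chi_{k_r}(z_{i-1}) = a_0+1-(i-1)e_0 + \sum_{\ell=1}^\nu
  \Bigl[\frac{-(i-1)\omega_\ell+a_\ell}{\alpha_\ell}\Bigr],
\]
which is exactly the $t=i-1$ summand appearing in~\eqref{equation:tau(i)}. Since $\chi_{k_r}(z_0)=0=\tau(0)$, summing yields $\chi_{k_r}(z_i)=\tau(i)$ for all $i\ge 0$, so $\min_i \chi_{k_r}(z_i)=\min_i \tau(i)$.

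The hardest step, and the main obstacle, is to upgrade this to a genuine \emph{global} minimum: one must show that $\min_{\cL_+}\chi_{k_r}$ is already attained along the one-parameter sequence $\{z_i\}$. Concretely, one needs that for each $i$ and each generator $b_{\ell,j}$ not currently being added by the algorithm at step $i+1$, the increment $\chi_{k_r}(z_i+b_{\ell,j})-\chi_{k_r}(z_i)$ is nonnegative; the hypothesis $\nu\ge 3$ together with negative definiteness of $X$ (equivalently $e<0$) makes $\chi_{k_r}$ coercive on $\cL_+$, and, combined with the convexity of $\chi_{k_r}$ in each arm direction, forces the lattice minimum onto the distinguished sequence. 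Combining these three ingredients gives $\min_{\cL_+}\chi_{k_r}=\min_i \tau(i)$, and substituting into the reduction formula yields the stated $d$-invariant expression.
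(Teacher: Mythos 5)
The paper does not give its own proof of this theorem; it is cited verbatim from N\'emethi~\cite{Nemethi:2005-1}. Your sketch reconstructs the rough shape of N\'emethi's original argument (reduce to minimizing $\chi_k$ over the Lipman cone, build a Laufer-type computation sequence indexed by the multiplicity of $b_0$, telescope to identify $\chi_{k_r}$ along the sequence with $\tau$, then argue the one-parameter minimum is already global), so the overall route is the right one.

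There are, however, two substantive concerns. First, the ``reformulation'' $d(Y,[k]) = \frac{k^2+b_2(X)}{4}-2\min_{z\in\cL_+}\chi_k(z)$ is not an off-the-shelf identity you can take as a starting point: that formula is itself a theorem (it holds for plumbing graphs that are \emph{almost rational} in N\'emethi's sense), and verifying almost-rationality for negative-definite star-shaped graphs with $\nu\ge 3$ is a nontrivial part of the work, requiring the Ozsv\'ath--Szab\'o plumbing algorithm or N\'emethi's graded-root machinery. Treating it as a known reduction elides a large piece of the argument. Second, the step you flag as ``hardest'' --- that the global lattice minimum of $\chi_{k_r}$ over $\cL_+$ is attained on the sequence $\{z_i\}$ --- is indeed the crux, and your appeal to coercivity and ``convexity in each arm direction'' does not establish it. What is actually needed is a descent argument in the style of Laufer: from any cycle in $\cL_+$ one can subtract vertices without increasing $\chi_{k_r}$ until one lands on the computation sequence, using the arm structure and $e_{\ell,j}\le -2$; simply asserting convexity does not yield this. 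Finally, a small but real sign bug: you claim the arm multiplicities after $i$ increments of $b_0$ are controlled by $\bigl[(i\omega_\ell+a_\ell)/\alpha_\ell\bigr]$, but your telescoping step (correctly, matching~\eqref{equation:tau(i)}) uses $\bigl[(-(i-1)\omega_\ell+a_\ell)/\alpha_\ell\bigr]$, and these floors are not equal. The distinguished-representative condition~\eqref{equation:cond-for-distinguished-repn} and the definition of $\tau$ in~\eqref{equation:tau(i)} deliberately use opposite signs on the $\omega_\ell$-term, and conflating them would derail the computation.
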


\begin{remark}
  \phantomsection\label{remark:min-tau(i)}
  \leavevmode\Nopagebreak
  \begin{enumerate}
    \item 
    For each $k_r$ given by~\eqref{equation:k_r}, the minimum in
    Theorem~\ref{theorem:nemethi-formula} can be found in finite steps.  To see
    this, let $\Delta_i = \tau(i+1)-\tau(i)$.  Then we have
    \begin{align*}
      \Delta_i & \ge a_0 + 1 - ie_0 + \sum_{\ell=1}^\nu
      \frac{- i \omega_\ell + a_\ell + \alpha_\ell - 1}{\alpha_\ell} 
      \\
      & = -e\cdot i
      + \biggl( 1+a_0 +
      \sum_{\ell=1}^\nu \frac{a_\ell-\alpha_\ell+1}{\alpha_\ell} \biggr) \ge 0
    \end{align*}
    if $i$ is not smaller than
    \[
      R := \biggl( 1+a_0 +
      \sum_{\ell=1}^\nu \frac{a_\ell-\alpha_\ell+1}{\alpha_\ell} \biggr) \Big/
      (-e).
    \]
    Here we use that $e$ is negative since $X$ is negative definite.  So, the
    minimum in Theorem~\ref{theorem:nemethi-formula} can be taken over $0\le i \le
    \max\{0,R\}$. 

    \item A similar argument shows that it can be determined in finite steps
    whether a class $k_r$ given by~\eqref{equation:k_r} satisfies
    Definition~\ref{definition:distinguished-representative}.  Indeed, the right
    hand side of~\eqref{equation:cond-for-distinguished-repn} is bounded from
    below by $-(1+\sum a_\ell/\alpha_\ell)-ei$.  Since $e<0$,
    \eqref{equation:cond-for-distinguished-repn} is satisfied for all large~$i$,
    and thus it suffices to check~\eqref{equation:cond-for-distinguished-repn}
    for only finitely many~$i$.
  \end{enumerate}
  Using (1) and (2), it is straightforward to write a practically efficient
  algorithm (and computer code) to find distinguished representatives of all
  \spinc\ structures of $Y$ and compute the associated $d$-invariants.
\end{remark}

\subsection{\texorpdfstring{$d$}{d}-invariants of the 3-manifolds
  \texorpdfstring{$Y$}{Y} and~\texorpdfstring{$B$}{B}}
\label{subsection:d-invariant-Y-B}

Let $Y$ be the 3-manifold given by the last surgery diagram in
Figure~\ref{figure:cobordism-from-A}, or equivalently by the first surgery
diagram in Figure~\ref{figure:surgery-diagram-Y}.  The following lemma gives an
estimate of the $d$-invariant of $Y$, which is used to complete the proof of
Lemma~\ref{lemma:d-invariant-A}.   Note that there are two \spinc\ structures on
$Y$ satisfying $c_1(\mathfrak{s})=0$, since $|H_1(Y)| = 2m^2-3m+1$ is even.

\begin{lemma}
  \label{lemma:d-invariant-Y}
  If $\mathfrak{s}$ is a \spinc\ structure on $Y$ such that
  $c_1(\mathfrak{s})=0$, then $d(Y,\mathfrak{s}) \le (2m-1)/4$.
\end{lemma}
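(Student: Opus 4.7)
The plan is to apply N\'emethi's algorithm (Section~\ref{subsection:nemethi-method}) to compute $d(Y,\mathfrak{s})$ directly and then verify the stated inequality. First, I would convert the surgery diagram of $Y$ into a star-shaped plumbing graph. The two-component link describing $Y$ has framings $m$ and $2m+7$ and is linked through a $(2m+1)$-full-twist box; this twist region can be resolved by standard Kirby calculus (blow-ups and slam-dunks), producing a framed-link diagram whose pattern reads off as a plumbing graph with a root vertex and several branches whose decorations are explicit functions of $m$. This yields a plumbed 4-manifold $X$ with $\partial X = Y$. I would then verify that $X$ is negative definite by checking $e = e_0 + \sum_{\ell}\omega_\ell/\alpha_\ell < 0$, using the continued fraction expansions of the branches.

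Next, I would identify the two $\spinc$ structures on $Y$ with $c_1(\mathfrak{s})=0$ under the bijection $\Spinc(Y) \approx \Char(X)/2\lambda(H_2(X))$; these correspond to cosets $[k_r]$ for which $k_r \in 2\lambda(H_2(X))$, a condition that can be checked against the basis $\{b_0^*, b_{\ell,j}^*\}$. For each such coset I would produce a distinguished representative $k_r$ as in \eqref{equation:k_r} satisfying Definition~\ref{definition:distinguished-representative}; by Remark~\ref{remark:min-tau(i)}(2) this can be done in finitely many steps.

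Finally, I would apply N\'emethi's formula (Theorem~\ref{theorem:nemethi-formula}). From the plumbing data I compute $k_r^2 + b_2(X)$ using the intersection form and, via \eqref{equation:tau(i)} and Remark~\ref{remark:min-tau(i)}(1), evaluate $\min\{\tau(i) \mid 0 \le i \le R\}$ over the bounded range $R$. This yields $d(Y,\mathfrak{s})$ as a closed expression in~$m$. Substituting into the formula and comparing with $(2m-1)/4$ finishes the proof.

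The main obstacle is the combinatorial and arithmetic bookkeeping: the continued fractions $\alpha_\ell/\omega_\ell$, the auxiliary integers $a_\ell$ defined in \eqref{equation:a_ell}, and the $\tau$-function (a sum of floor functions of linear combinations) all depend on the odd integer $m \ge 3$ in a nontrivial way, and verifying the inequality uniformly in $m$ requires careful tracking of parity and divisibility. The argument parallels the special case $m = 1$ treated in~\cite[Section~6]{Cha-Kim:2017-1}, but the extension to general odd $m$ yields longer branches in the plumbing graph and thus a more delicate arithmetic, exactly the sort of care flagged in the paragraph following~\cite[Eq.~(6.10)]{Cha-Kim:2017-1}.
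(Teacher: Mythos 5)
Your plan follows exactly the route the paper takes: plumbing description of $Y$, negative-definiteness check via $e<0$, identification of the two $\spinc$ structures with vanishing $c_1$ as cosets of $\lambda(H_2(X))$, distinguished representatives, and N\'emethi's formula. So the approach is the same, but as written this is a roadmap rather than a proof: none of the computations that actually make the argument work are carried out. In particular you do not (a) exhibit the plumbing graph (the paper gets $\nu=4$ branches with continued fractions $(2m+1)/(2m),\,2/1,\,3/1,\,(m+1)/1$ and $e=-(m-1)(2m-1)/(6(m+1)(2m+1))$), (b) produce the explicit distinguished representatives $k_1,k_2$ and compute $k_1^2=-3$, $k_2^2=-m-4$, or (c) establish $\min_i\tau(i)=0$.

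Point (c) is the real crux you are glossing over. Getting a ``closed expression in $m$'' from Theorem~\ref{theorem:nemethi-formula} requires knowing the minimum of the $\tau$-sequence, and the paper proves $\Delta_i=\tau(i+1)-\tau(i)\ge 0$ for all $i$ by a direct floor-function estimate that is only uniform for $m\ge 23$ (where the bound $R$ of Remark~\ref{remark:min-tau(i)}(1) satisfies $R\le 10$), while the finitely many cases $m<23$ are verified by computer. Your claim that the inequality can be verified ``uniformly in $m$'' with careful bookkeeping understates this: there genuinely is a case split, and without either the estimate showing $\tau$ is non-decreasing or the finite check, the application of N\'emethi's formula does not yet yield a bound on $d(Y,\mathfrak{s})$.
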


\begin{proof}
  The surgery diagram calculus in Figure~\ref{figure:surgery-diagram-Y} shows
  that $Y$ is a Seifert 3-manifold.  The last plumbing graph in
  Figure~\ref{figure:surgery-diagram-Y} describes a plumbed 4-manifold $X$ with
  $\partial X=Y$.

  \begin{figure}[ht]
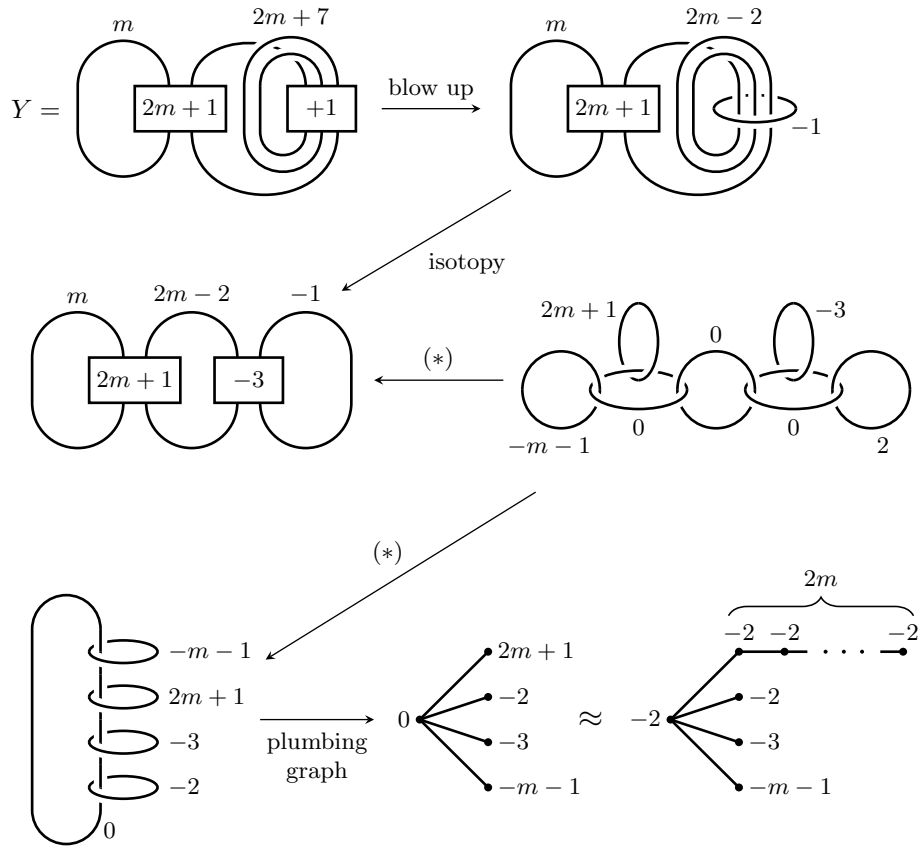

    \includestandalone{surgery-diagram-Y}
    \caption{Surgery diagram calculus which gives a plumbing tree for~$Y$. The symbol $(*)$ means handle slides and elimination of components with 0-framed meridians.}
    \label{figure:surgery-diagram-Y}
  \end{figure}

  To compute the $d$-invariant, we will apply the method discussed in
  Section~\ref{subsection:nemethi-method}. Using the notation in
  Section~\ref{subsection:nemethi-method}, denote the basis of $H_2(X)$ by
  $\{b_0$, $b_{1,1}$, \dots,~$b_{1,2m}$, $b_{2,1}$, $b_{3,1}$, $b_{4,1}\}$, and
  the dual basis of $H^2(X)$ by  $\{b^*_0$, $b^*_{1,1}$, \dots,~$b^*_{1,2m}$,
  $b^*_{2,1}$, $b^*_{3,1}$, $b^*_{4,1}\}$. The intersection form $\lambda\colon
  H_2(X)\times H_2(X)\to \Z$ is computed straightforwardly from the plumbing
  graph:
  \[
    \lambda = \left[
      \begin{array}{c|ccccc|c|c|c}
        -2 & 1 & & & & & 1 & 1 & 1 \\
        \hline
        1 & -2 & 1 & & & & & & \\
        & 1 & -2 & 1 & & & & & \\
        & & \ddots & \ddots & \ddots & & & \\
        & & & 1 & -2 & 1 & & & \\
        & & & & 1 & -2 & & & \\
        \hline
        1 & & & & & & -2 & & \\
        \hline
        1 & & & & & & & -3 & \\
        \hline
        1 & & & & & & & & -m-1
      \end{array}
    \right]_{(2m+4)\times(2m+4)}
  \]
  Also, using the definition in Section~\ref{subsection:nemethi-method}, it is
  routine to compute the following.
  \[
    \Bigl( \frac{\alpha_1}{\omega_1},\, \frac{\alpha_2}{\omega_2},\,
    \frac{\alpha_3}{\omega_3},\, \frac{\alpha_4}{\omega_4} \Bigr)
    = \Bigl( \frac{2m+1}{2m},\,\frac21,\,\frac31,\,\frac{m+1}{1} \Bigr).
  \]
  So, the orbifold Euler number is given by
  \[
    e = -2 + \frac{2m}{2m+1} + \frac{1}{2} + \frac{1}{3} + \frac{1}{m+1}
    = -\frac{(m-1)(2m-1)}{6(m+1)(2m+1)}.
  \]
  For $m>1$, we have $e<0$, so $X$ is negative definite.

  We will describe two \spinc\ structures $[k_1]$ and $[k_2]\in
  \Char(X)/2\lambda(H_2(X))$.  Let
  \begin{align*}
    k_1 &:= -2 b^*_{1,2m} - b^*_{3,1} + 2b^*_{4,1},
    \\ 
    k_2 &:= -2 b^*_{1,2m-1} + b^*_{3,1} - 2b^*_{4,1}.
  \end{align*}
  It is straightforward to show that $k_1$ and $k_2$ are distinguished
  representatives in the sense of
  Definition~\ref{definition:distinguished-representative}.  Indeed, in our
  case, the canonical \spinc\ structure described
  in~\eqref{equation:canonical-spinc-structure} is given by $K= b^*_{3,1} -(m+1)
  b^*_{4,1}$, and $k_1$ is of the form~\eqref{equation:k_r} where
  \[
    a_0 = 0,\quad (a_{1,1},\ldots,a_{1,2m}) = (0,\ldots,0,1), \quad
    a_{2,1}=0,\quad a_{3,1}=1,\quad a_{4,1}=1.
  \]
  By~\eqref{equation:a_ell}, we have $(a_0,a_1,a_2,a_3,a_4) =
  (0,1,0,1,\tfrac{m-3}{2})$.  This satisfies the conditions in
  Definition~\ref{definition:distinguished-representative}, so $k_1$ is a
  distinguished representative.  The class $k_2$ is shown to be a distinguished
  representative too, by similar computation.  In this case, we have
  \[
    a_0 = 0,\quad (a_{1,1},\ldots,a_{1,2m}) = (0,\ldots,0,1,0), \quad
    a_{2,1}=0,\quad a_{3,1}=0,\quad a_{4,1}=m-2
  \]
  and $(a_0,a_1,a_2,a_3,a_4) = (0,2,0,0,m-2)$.

  Under the adjoint $\lambda\colon H_2(X) \to H^2(X)=\Hom(H_2(X),\Z)$ of the
  intersection form of~$X$, $k_1$ and $k_2$ are respectively the images of
  \begin{align*}
    x_1 &= 2b_0 + 2b_{1,1}+\cdots+2b_{1,2m} + b_{2,1} + b_{3,1}, \\
    x_2 &= 4b_1 + 4b_{1,1}+\cdots+4b_{1,2m-1}+2b_{1,2m}
    + 2b_{2,1} + 2b_{3,1} + 2b_{4,1}.
  \end{align*}
  Recall that $\Char(X)\subset H^2(X)$ is identified with $\Spinc(X)$ via~$c_1$,
  and thus for a \spinc\ structure $[k] \in \Char(X)/2\lambda(H_2(X))$ of $Y$,
  we have $c_1([k]) = k|_Y$.  So, $c_1([k]) = 0$ if and only if $k$ lies in the
  kernel of $H^2(X) \to H^2(Y)$, or equivalently $k$ lies in the image of
  $\lambda\colon H_2(X) \to H^2(X)$.  From this observation, it follows that
  $c_1([k_i]) = 0$ for $i=1,2$, since $k_i = \lambda(x_i)$. Also, $[k_1] \ne
  [k_2]$ since $x_1-x_2\notin 2H_2(X)$.  Therefore, to complete the
  proof, it suffices to show $d(Y,[k_i]) \le (2m-1)/4$ for $i=1,2$.

  We have
  \[
    k_1^2 = \lambda(x_1,x_1) = -3, \quad k_2^2 = \lambda(x_2,x_2) = -m-4.
  \]
  The last thing we need is the minimum of the values of $\tau(i)$ defined
  in~\eqref{equation:tau(i)}.  Recall the notation $\Delta_i =
  \tau(i+1)-\tau(i)$ from Remark~\ref{remark:min-tau(i)}.
  
  \begin{assertion*}
    For both $k_1$ and $k_2$ and for all $i\ge 0$, $\Delta_i \ge 0$.
  \end{assertion*}

  We will provide a proof of the assertion for $m\ge 23$.  For $m<23$, the
  assertion is verified by direct inspection using
  Remark~\ref{remark:min-tau(i)} (indeed the author used a computer program), so
  we omit details for $m<23$.  Note that it suffices to use $m\ge 23$, to prove
  the main results of this paper, Theorems~\ref{theorem:main-top-slice}, \ref{theorem:main-independence-top-slice}, \ref{theorem:main-bipolar}
  and~\ref{theorem:main-independence-bipolar}\@.

  For the case of $k_1$, by Remark~\ref{remark:min-tau(i)}, we have $\Delta_i\ge
  0$ for $i\ge R$, where
  \[
    R=8+(48m-6)/(2m-1)(m-1).
  \]
  Since $m\ge 23$, $R\le 10$.  So, $\Delta_i\ge 0$ for $i\ge 10$. For $i\le 10$,
  using~\eqref{equation:tau(i)}, we have
  \begin{align*}
    \Delta_i &= 1 + 2i + \biggl[ \frac{-2mi+1}{2m+1} \bigg]
    + \biggl[ \frac{-i}{2} \bigg]
    + \biggl[ \frac{-i+1}{3} \bigg]
    + \biggl[ \frac{-i+(m-3)/2}{m+1} \bigg]
    \\
    & \ge 1+2i-i+\frac{-i-1}{2} + \frac{-i-1}{3} = \frac{i+1}{6} \ge 0.
  \end{align*}
  This shows the claim for $k_1$.  For the case of $k_2$, we proceed in the same
  way.  We have 
  \[
    R=7+(48m-6)/(2m-1)(m-1).
  \]
  Since $m\ge 23$, $R\ge 10$, and thus $\Delta_i\ge 0$ for $i\ge 10$ by
  Remark~\ref{remark:min-tau(i)}.  For $1\le i\le 10$,
  using~\eqref{equation:tau(i)}, we have
  \begin{align*}
    \Delta_i &= 1 + 2i + \biggl[ \frac{-2mi+1}{2m+1} \bigg]
    + \biggl[ \frac{-i}{2} \bigg]
    + \biggl[ \frac{-i}{3} \bigg]
    + \biggl[ \frac{-i+m-2}{m+1} \bigg]
    \\
    & \ge 1+2i-i+\frac{-i-1}{2} + \frac{-i-2}{3} = \frac{i-1}{6} \ge 0.
  \end{align*}
  For $i=0$, a direct computation using~\eqref{equation:tau(i)} gives
  $\Delta_0=1$.  This completes the proof of the assertion.

  From the assertion, it follows that $\min\{\tau(i)\mid i\ge 0\}=0$ since
  $\tau(0)=0$. So, by N\'emethi's Theorem~\ref{theorem:nemethi-formula},
  \[
    d(Y,[k_i]) = \frac{k_i^2 + 2m+4}{4} =
    \begin{cases}
      (2m-1)/4 &\text{for } i=1, \\
      m/4 &\text{for } i=2.
    \end{cases}
  \]
  Therefore $d(Y,[k_i]) \le (2m-1)/4$ holds for $i=1,2$.
\end{proof}

Now, to complete the proof of
Theorem~\ref{theorem:linear-combination-non-negativity}, it only remains to
prove of Lemma~\ref{lemma:d-invariant-B}, which estimates $d$-invariants of the
3-manifold~$B$ in Figure~\ref{figure:surgery-diagram-A-B}.  In the proof below,
we will use that $B$ is a Seifert 3-manifold as well.

\begin{proof}[Proof of Lemma~\ref{lemma:d-invariant-B}]
  
  Recall that $B$ is the 3-manifold described in
  Figure~\ref{figure:surgery-diagram-A-B}.  Let $\mathfrak{s}$ be a \spinc\
  structure of $B$ satisfying $c_1(\mathfrak{s})=0$.  The goal is to show that
  $d(B,\mathfrak{s})\le (m+2)/4$.

  As we will see in what follows, it will be useful to consider $-B$ instead
  of~$B$.  That is, we will show $d(-B,\mathfrak{s})\ge -(m+2)/4$.
  Figure~\ref{figure:plumbing-diagram-B} shows that $-B$ is a Seifert
  3-manifold. We use the notation of Section~\ref{subsection:nemethi-method}.
  The associated plumbed 4-manifold $X$ has $H_2(X)=\Z^{2m}$, with basis
  elements $b_0$, $b_{1,1}$, \dots,~$b_{1,m}$, $b_{2,1}$,
  \dots,~$b_{2,m-2}$,~$b_{3,1}$. The associated decorations are $e_0=-2$,
  $e_{1,j} = -2$ for all $j$, $e_{2,j}=-2$ for all $j$, and $e_{3,1}=-2m-1$. Let
  $b^*_0$, $b^*_{\ell,j}$ be the dual basis elements in~$H^2(X)$.
  Since
  \[
    e = -2 + \frac{m}{m+1} + \frac{m-2}{m-1} + \frac{1}{2m+1}
    = \frac{-3m^2 -2m -1}{(m^2-1)(2m+1)} < 0,
  \]
  the 4-manifold $X$ is negative definite.  This is why we use $-B$ instead
  of~$B$.

  \begin{figure}[H]
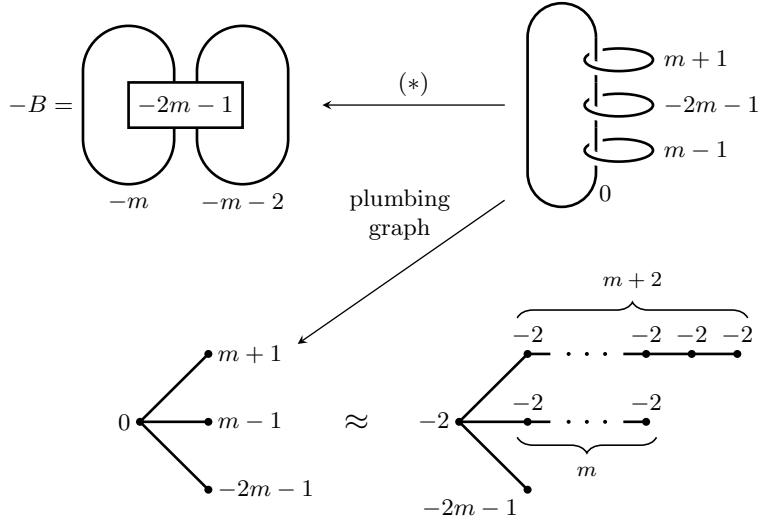

    \includestandalone{plumbing-diagram-B}
    \caption{Surgery diagram calculus showing that $-B$ is a Seifert 3-manifold. The symbol $(*)$ means handle slides (or Rolfsen twist~\cite{Rolfsen:1976-1}*{p.~264}) followed by elimination of a component together with a 0-framed meridian.}
    \label{figure:plumbing-diagram-B}
  \end{figure}

  The canonical \spinc\ structure $K$ of $X$ is given by $K=(2m-1)b^*_{3,1}$.
  Let
  \begin{align*}
    x_1 &= b_{1,1} + b_{1,3} + \cdots + b_{1,m} + b_{3,1}, \\
    x_2 &= b_{2,1} + b_{2,3} + \cdots + b_{2,m-2} + b_{3,1},
  \end{align*}
  and let $k_i = \lambda(x_i) \in H^2(X)$ for $i=1$,~$2$.  Then $k_i \in
  \Char(X)=K+2H^2(X)$, so that $[k_i] \in \Char(X)/2\lambda(H_2(X))=\Spinc(-B)$
  is a \spinc\ structure of~$-B$.  Similarly to the proof of
  Lemma~\ref{lemma:d-invariant-Y}, $c_1([k_i]) = k_i|_{-B} = 0$.  Also $[k_1]
  \ne [k_2]$ in $\Spinc(-B)$ since $x_1-x_2 \notin 2H_2(X)$.  It follows that
  $[k_1]$ and $[k_2]$ are the two \spinc\ structures of $-B$ with $c_1=0$.  So,
  it suffices to show that $d(-B,[k_i])\ge -(m+2)/4$ for $i=1$,~$2$.  Instead of
  determining the values exactly, we will present a simpler argument which gives
  the promised estimate. We have $k_1^2 = \lambda(x_1,x_1) = -3m-2$. So, by
  Ozsv\'ath-Szab\'o's inequality~\cite[Theorem~9.6]{Ozsvath-Szabo:2003-2},
  \begin{equation}
    \label{equation:-B-k_1-estimate}  
    d(-B,[k_1]) \ge \frac{k_1^2 + b_2(X)}{4} = \frac{-m-2}{4}.
  \end{equation}
  Similarly, since $k_2^2 = \lambda(x_2,x_2) = -3m$, we have
  \begin{equation}
    \label{equation:-B-k_2-estimate}  
    d(-B,[k_2]) \ge \frac{k_2^2 + b_2(X)}{4} = \frac{-m}{4}.
  \end{equation}
  So, we have $d(-B,[k_i]) \ge -(m+2)/4$ for $i=1$,~$2$.   (Indeed, it can be
  shown that the equality holds in~\eqref{equation:-B-k_1-estimate}
  and~\eqref{equation:-B-k_2-estimate}, by using the technique described in
  Section~\ref{subsection:nemethi-method}, as we did in the proof of
  Lemma~\ref{lemma:d-invariant-Y}.) 
\end{proof}

\appendix
\section*{Appendix. General primary decomposition}
\setcounter{section}{0}\stepcounter{section}
\label{section:general-primary-decomposition}

The goal of this appendix is to present an abstract formulation of the notion of
primary decomposition along an invariant with values in an unique factorization
domain.  We also discuss questions in specific cases and earlier related results
in the literature from our viewpoint.  The organization is as follows.  In
Section~\ref{subsection:pd-general-definition}, we describe the definition of
general primary decomposition and present basic observations. In
Section~\ref{subsection:pd-extensions}, we investigate primary decomposition of
extensions.  In Sections~\ref{subsection:pd-in-knot-concordance}, we discuss
specializations to various knot concordance groups (e.g. smooth/topological) and
related filtrations.  In Section~\ref{subsection:pd-in-rational-H-cob}, we
discuss the case of rational homology cobordism group of rational homology
3-spheres.

\subsection{Definitions and basic observations}
\label{subsection:pd-general-definition}

Let $\bK$ be an abelian monoid and $\sim$ is an equivalence relation on~$\bK$.
Suppose that the monoid structure on $\bK$ descends to an abelian group
structure on the set $\cC := \bK/{\sim}$ of equivalence classes. Denote the
equivalence class of $K\in \bK$ by $[K] \in \cC$.

Let $R$ be a unique factorization domain with involution $r \mapsto r^*$. Main
examples are $\Z$ with a trivial involution, and the Laurent polynomial ring
$\Q[t^{\pm1}]$ with the standard involution $\bigl(\sum a_i t^i\bigr)^* = \sum
a_i t^{-i}$.  For $r$, $s\in R$, write $r\doteq s$ if $r$ and $s$ in $R$ are
associates; that is, $r=us$ for some unit $u$ in~$R$. We say that two
irreducibles $\lambda$ and $\mu$ in $R$ are \emph{$*$-associates} if either
$\lambda\doteq \mu$ or $\lambda^*\doteq \mu$.  We say that $r\in R$ is
\emph{self-dual} if $r\doteq r^*$.

Suppose $\chi \colon \bK \to (R\sm\{0\}) / {\doteq}$ is a function.  We will
denote a representative of $\chi(K)$ by~$\Delta_K\in R\sm\{0\}$.  Suppose the
following hold for all $K$, $K'$ in~$\bK$:
\begin{enumerate}[label=($\Delta$\arabic*)]
  \item\label{item:delta-symmetry}
  $\Delta_K$ is self-dual.
  \item\label{item:delta-addivitity}
  $\Delta_{K+K'} \doteq \Delta_K\cdot \Delta_{K'}$.
  \item\label{item:delta-negation}
  $-[K] = [J]$ for some $J$ in $\bK$ such that $\Delta_J
  \doteq \Delta_K$.
\end{enumerate}
In particular, writing $\cS(R)=\{r\in R\mid r\ne 0$ is self-dual$\}$, $\bK
\xrightarrow{\chi} \cS(R)/{\doteq}$ is a homomorphism between abelian
monoids.

Of course, the main example which one may keep in mind is the case that $\bK$ is
the monoid of knots under connected sum, ${\sim}$ is concordance and $\Delta_K$
is the Alexander polynomial in $R=\Q[t^{\pm1}]$.  Including this, we will
discuss various specific examples in
Sections~\ref{subsection:pd-in-knot-concordance}
and~\ref{subsection:pd-in-rational-H-cob}.

Let $\Delta = \{[K] \in \cC \mid \Delta_K \doteq 1 \}$.  For an irreducible
$\lambda$ in $R$, let $S(\lambda)=\lambda$ if $\lambda$ is self-dual, and
$S(\lambda)=\lambda\lambda^*$ otherwise.   Let
\begin{align*}
  \cC_\lambda &:= \{[K] \in \cC \mid \Delta_K \doteq
  S(\lambda)^k \text{ for some } k\ge 0\},
  \\
  \cC^\lambda &:= \{[K] \in \cC \mid \Delta_K
  \text{ is relatively prime to } \lambda\}.
\end{align*}
It is straightforward to verify that $\cC_\lambda$, $\cC^\lambda$ and $\Delta$
are subgroups of $\cC$, using~\ref{item:delta-addivitity}
and~\ref{item:delta-negation}. Also, using~\ref{item:delta-symmetry}, it is seen
that $[K]\in \cC_\lambda$ if and only if $K\sim J$ for some $J$ with $\Delta_J$
lying in the multiplicative subset generated by $\lambda$, $\lambda^*$ and the
units in~$R$. We have $\Delta\subset \cC_\lambda$, $\cC_\lambda=\cC_{\lambda^*}$
and $\cC^\lambda=\cC^{\lambda^*}$. Use $S(\lambda)=S(\lambda^*)$
and~\ref{item:delta-symmetry} to verify the two equalities respectively.

Note that while $\Delta_K$ is self-dual, irreducible factors of $\Delta_K$ are
not necessarily self-dual. For instance, in $R=\Q[t^{\pm1}]$, we have
$-2t+5-2t^{-1}=(t-2)(t^{-1}-2)$.  By the above definition, a class $[K]$ with
$\Delta_K=-2t+5-2t^{-1}$ lies in $\cC_\lambda$ for $\lambda=t-2$.

\begin{definition}
  \label{definition:weak-pd}
  Let $\bP$ be the set of $*$-associate classes of irreducibles in~$R$.

  \begin{enumerate}

    \item 
    We say that $(\bK,\sim,\chi)$ is \emph{left primary decomposable} if the sum
    \[
      \Phi_L\colon
      \bigoplus_{[\lambda]\in \bP} \cC_\lambda/\Delta \to \cC/\Delta
    \]
    of the inclusions $\cC_\lambda/\Delta \hookrightarrow \cC/\Delta$ is an
    isomorphism.

    \item
    We say that $(\bK,\sim,\chi)$ is \emph{right primary decomposable} if the
    surjections $\cC/\Delta \twoheadrightarrow \cC/\cC^\lambda$ induce an
    isomorphism
    \[
      \Phi_R\colon
      \cC/\Delta \to \bigoplus_{[\lambda]\in\bP} \cC/\cC^\lambda.
    \]

  \end{enumerate}  
\end{definition}

Since each $\Delta_K$ has finitely many irreducible factors, it follows that the
product $\cC/\Delta \to \prod_{[\lambda]\in\bP} \cC/\cC^\lambda$ of the
surjections $\cC/\Delta \twoheadrightarrow \cC/\cC^\lambda$ has image in the
direct sum~$\bigoplus_{[\lambda]\in\bP}\cC/\cC^\lambda$.  That is, $\Phi_R$ is
always a well-defined homomorphism.

We remark that taking the quotient by $\Delta$ may be viewed as an
analogue of ignoring units in the primary decomposition in a unique
factorization domain. 

For brevity, when the choice of $(\bK,\sim,\chi)$ is clearly understood from the
context, we will simply say that $\cC$ is left or right primary decomposable.

From the definition, it is straightforward to see that $\cC$ is left primary
decomposable if only if the following two conditions
\ref{item:left-pd-existence} and \ref{item:left-pd-uniqueness} hold. More
precesly, \ref{item:left-pd-existence} and \ref{item:left-pd-uniqueness} are
respectively equivalent to the surjectivity and injectivity of~$\Phi_L$.

\begin{enumerate}[label=(LP\arabic*)]
  \item\label{item:left-pd-existence}
  Existence: for every $K\in \bK$, there exist irreducibles $\lambda_1$,
  \dots,~$\lambda_n$ and $[K_1]\in \cC_{\lambda_1},\ldots,[K_n]\in
  \cC_{\lambda_n}$ such that $[K] \equiv [K_1]+\cdots+[K_n]\bmod \Delta$.
  \item\label{item:left-pd-uniqueness}
  Uniqueness: if $[K_1]+\cdots+[K_n] \equiv 0 \bmod \Delta$ and $[K_i]\in
  \cC_{\lambda_i}$ for some pairwise distinct $[\lambda_1]$, \dots,~$[\lambda_n]
  \in \bP$, then $[K_i]\equiv 0 \bmod \Delta$ for all~$i$.
\end{enumerate}

The following examples illustrate that the left and right primary
decomposabilities are independent of each other.

\begin{example}
  \label{example:left-pd-but-not-right}
  Let
  \[
    \bK = \{aK+bK'+cJ+dJ'\mid a,b,c,d\ge 0\}\cong (\Z_{\ge 0})^4
  \]
  be the free abelian monoid generated by four generators $K$, $K'$, $J$
  and~$J'$.  Define $\sim$ on $\bK$ by
  \[
    aK+bK'+cJ+dJ' \sim pK+qK'+rJ+sJ' \Longleftrightarrow
    a-b+c-d = p-q+r-s
  \]
  and let $\cC = \bK/{\sim}$.  Then $\cC$ is the infinite cyclic group, and
  $[K]=-[K']=[J]=-[J']$ is a generator.  Fix $R$ which has three distinct
  self-dual irreducibles $\lambda$, $\mu$ and~$\nu$.  Define
  \[
    \Delta_{aK+bK'+cJ+dJ'} = \lambda^{a+b}(\mu\nu)^{c+d}.
  \]
  In particular $\Delta_K = \lambda$ and $\Delta_J=\mu\nu$.  It is
  straightforward to verify that \ref{item:delta-symmetry},
  \ref{item:delta-addivitity} and~\ref{item:delta-negation} are satisfied, and
  that the subgroup $\Delta$ is trivial.

  We claim that $\cC$ is left primary decomposable.  Indeed, it is
  straightforward to see that for $L\in \bK$, $\Delta_L$ is a power of an
  irreducible if and only if $L=aK+bK'$ (and thus $\Delta_L=\lambda^{a+b}$).  It
  follows that for any irreducible $\zeta$ in $R$, $\cC_\zeta=\cC$ if
  $\zeta=\lambda$, and $\cC_\zeta=0$ otherwise.  So $\Phi_L\colon
  \bigoplus_{[\zeta]} \cC_\zeta/\Delta \to \cC/\Delta$ is an isomorphism. 

  On the other hand, $\cC$ is not right primary decomposable.  To see this,
  observe that $[K]\in \cC^\zeta$ for all $\zeta\ne \lambda$,  since
  $\Delta_K=\lambda$.  Also $[K]=[J]\in \cC^\lambda$ since $\Delta_J=\mu\nu$.
  Since $[K]$ generates $\cC$, it follows that $\cC^\zeta=\cC$ for all~$\zeta$.
  So $\Phi_R\colon \cC/\Delta \to \bigoplus_{[\zeta]} \cC/\cC^\zeta=0$ is
  not injective.
\end{example}

\begin{example}
  \label{example:right-pd-but-not-left}
  Let $\bK$, $\sim$, $\cC$, $R$, $\lambda$, $\mu$ and $\nu$ be as in
  Example~\ref{example:left-pd-but-not-right}, but define
  \[
    \Delta_{aK+bK'+cJ+dJ'} = (\lambda\mu)^{a+b}(\mu\nu)^{c+d}.
  \]
  Note that \ref{item:delta-symmetry}, \ref{item:delta-addivitity}
  and~\ref{item:delta-negation} are satisfied, $\Delta_K = \lambda\mu$,
  $\Delta_J=\mu\nu$ and $\Delta$ is trivial.

  First we will show that $\cC$ is not left primary decomposable.  For $L\in
  \bK$, either $L=0$ or $\Delta_L$ is not a power of $S(\zeta)$ for any
  irreducible $\zeta$.  Therefore $\cC_\zeta=0$ for all~$\zeta$.  It follows
  that $\Phi_L\colon \bigoplus_{[\zeta]} \cC_\zeta/\Delta = 0 \to \cC/\Delta$ is
  not surjective.

  On the other hand, $\cC$ is right primary decomposable.  To prove this,
  observe that $\Delta_L$ is relatively prime to $\mu$ if and only if $L=0$.
  That is, $\cC^\mu=0$.  Also, $[K] \in \cC^\zeta$ for $\zeta\ne \lambda,\mu$,
  and $[J] \in \cC^\zeta$ for $\zeta\ne\mu,\nu$.  Since $\cC$ is generated by
  $[K]=[J]$, it follows that $\cC^\zeta=\cC$ for $\zeta\ne \mu$.  Therefore
  $\Phi_R\colon \cC/\Delta \to \bigoplus_{[\zeta]} \cC/\cC^\zeta$ is an
  isomorphism.
\end{example}

While the above examples tell us that the bijectivity of $\Phi_L$ does not imply
the bijectivity of $\Phi_R$ nor vice versa, the following observations provide
partial relationships between the surjectivity and injectivity of $\Phi_L$
and~$\Phi_R$.

\begin{lemma}
  \phantomsection\leavevmode\Nopagebreak
  \label{lemma:surj-inj-Phi_L-Phi_R}
  \begin{enumerate}
    \item\label{item:surj-Phi_L-Phi_R}
    If $\Phi_L$ is surjective, then $\Phi_R$ is surjective.
    \item\label{item:inj_Phi_L-Phi_R}
    Suppose $\Phi_R$ is injective. Then the following splitting property holds:
    if $\Delta_K$ and $\Delta_J$ are relatively prime and $[K]+[J]\equiv0 \bmod
    \Delta$, then $[K]\equiv[J]\equiv0 \bmod \Delta$. In particular, $\Phi_L$ is
    injective.
  \end{enumerate}
\end{lemma}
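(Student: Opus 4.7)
The plan rests on a single basic observation used repeatedly: if $[K]\in\cC_\lambda$ and $\mu$ is an irreducible not $*$-associate to $\lambda$, then $[K]\in\cC^\mu$. Indeed, some representative of $[K]$ has Alexander polynomial a power of $S(\lambda)$, which is coprime to~$\mu$. In particular, for distinct $[\lambda]\ne[\mu]$ in $\bP$, we have $\cC_\lambda\subset\cC^\mu$, so the composition $\cC_\lambda/\Delta\hookrightarrow\cC/\Delta\twoheadrightarrow\cC/\cC^\mu$ is zero.

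For~\ref{item:surj-Phi_L-Phi_R}, I would first show that, assuming $\Phi_L$ surjective, for each irreducible $\lambda$ the composition $\cC_\lambda/\Delta\hookrightarrow\cC/\Delta\twoheadrightarrow\cC/\cC^\lambda$ is surjective: given a class in the target, lift it to $[K]\in\cC/\Delta$, write $[K]\equiv\sum_i[K_i]\bmod\Delta$ with $[K_i]\in\cC_{\lambda_i}$, and drop the summands with $[\lambda_i]\ne[\lambda]$ since they lie in~$\cC^\lambda$; the remaining terms sum to an element of $\cC_\lambda$ mapping to the given class. Then, given any finitely supported tuple $(x_{\lambda})_{\lambda\in S}\in\bigoplus_{[\lambda]}\cC/\cC^\lambda$, lift each $x_\lambda$ to some $[J_\lambda]\in\cC_\lambda$ and set $[K]=\sum_{\lambda\in S}[J_\lambda]$. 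The basic observation implies that $\Phi_R([K])_\mu$ vanishes for $[\mu]\notin S$ and equals $x_\mu$ for $\mu\in S$, proving surjectivity of~$\Phi_R$.

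For~\ref{item:inj_Phi_L-Phi_R}, I would first establish the splitting property by showing $\Phi_R([K])=0$ under the stated hypotheses. For any irreducible~$\lambda$: if $\lambda\nmid\Delta_K$ then $[K]\in\cC^\lambda$ directly; if $\lambda\mid\Delta_K$ then $\gcd(\Delta_K,\Delta_J)=1$ forces $\lambda\nmid\Delta_J$, hence $[J]\in\cC^\lambda$, and the hypothesis $[K]+[J]\equiv 0\bmod\Delta\subset\cC^\lambda$ yields $[K]\in\cC^\lambda$. Thus $\Phi_R([K])=0$, so by injectivity of $\Phi_R$ we obtain $[K]\equiv 0\bmod\Delta$, and symmetrically $[J]\equiv 0\bmod\Delta$.

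To deduce injectivity of $\Phi_L$ from the splitting property, I would induct on the number $n$ of nonzero components of a purported element of $\ker\Phi_L$. Given $\sum_{i=1}^n[K_{\lambda_i}]\equiv 0\bmod\Delta$ with pairwise distinct $[\lambda_i]\in\bP$ and $[K_{\lambda_i}]\in\cC_{\lambda_i}$, pick representatives in $\bK$ whose Alexander polynomials are powers of $S(\lambda_i)$, and use~\ref{item:delta-addivitity} to produce a representative $J\in\bK$ of $\sum_{i\ge 2}[K_{\lambda_i}]$ with $\Delta_J$ a product of such powers; then $\Delta_{K_{\lambda_1}}$ and $\Delta_J$ are coprime, so the splitting property yields $[K_{\lambda_1}]\equiv 0$ and $\sum_{i\ge 2}[K_{\lambda_i}]\equiv 0$, and induction finishes. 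The only mild subtlety is keeping track of the distinction between elements of $\cC$ and representatives in $\bK$ when invoking~\ref{item:delta-addivitity}; everything else is formal.
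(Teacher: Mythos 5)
Your proposal is correct and follows essentially the same route as the paper. Both parts rest on the same key observation (that $\cC_\lambda\subset\cC^\mu$ whenever $[\lambda]\ne[\mu]$), part~(1) uses the same one-component-at-a-time lifting argument, and part~(2) establishes the splitting property by exactly the same case analysis before deducing injectivity of $\Phi_L$; the only difference is that you spell out the short induction that the paper dismisses as "immediately follows," which is a welcome clarification rather than a departure.
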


\begin{proof}
  \ref{item:surj-Phi_L-Phi_R} The homomorphism $\Phi_R$ is surjective if and
  only if for every given $[K]\in \cC$ and $[\lambda]\in\bP$, there exists $[J]
  \in \cC$ such that $[J] \equiv [K] \bmod \cC^{\lambda}$ and $[J] \equiv 0
  \bmod \cC^\mu$ for all $[\mu] \ne [\lambda]$. By the surjectivity of $\Phi_L$
  or equivalently~\ref{item:left-pd-existence}, $[K] \equiv [K_1]+\cdots+[K_n]
  \bmod \Delta$ for some $[K_i]\in \cC_{\lambda_i}$, where $[\lambda_i]\ne
  [\lambda_j]$ for $i\ne j$.  If $[\lambda]=[\lambda_i]$ for some~$i$, then let
  $[J] = [K_i]$. Otherwise, let $[J]=0$.  Then, since $\cC_{\lambda_i} \subset
  \cC^{\mu}$ whenever $[\mu] \ne [\lambda_i]$, it follows that $[J]$ has the
  desired property.

  \ref{item:inj_Phi_L-Phi_R} Suppose $\gcd(\Delta_K,\Delta_J)=1$ and
  $[K]+[J]\equiv 0 \bmod \Delta$. If an irreducible $\lambda$
  divides~$\Delta_J$, then $\gcd(\lambda,\Delta_K)=1$ and so $[K] \equiv 0 \bmod
  \cC^\lambda$.  If $\gcd(\lambda,\Delta_J)=1$, then $[K] \equiv [K]+[J] \equiv
  0 \bmod \cC^\lambda$. So $[K]\equiv 0\bmod \cC^\lambda$ for all $\lambda$, and
  thus $[K]\equiv 0\bmod \Delta$ by the injectivity of~$\Phi_R$.  The same
  argument shows $[J]\equiv 0 \bmod \Delta$.  From this,
  \ref{item:left-pd-uniqueness} immediately follows.  That is, $\Phi_L$ is
  injective.
\end{proof}

\begin{remark}
  As seen in the last sentence of the above proof of
  Lemma~\ref{lemma:surj-inj-Phi_L-Phi_R}\ref{item:surj-Phi_L-Phi_R}, the
  composition $\cC_\lambda/\Delta \hookrightarrow \cC/\Delta \twoheadrightarrow
  \cC/\cC^\mu$ is zero if $[\lambda]\ne [\mu]$. Consequently, the composition
  \[
    \Phi_R \circ \Phi_L \colon \bigoplus_{[\lambda]\in \bP} \cC_\lambda/\Delta
    \to \bigoplus_{[\lambda]\in \bP} \cC/\cC^\lambda
  \]
  is the orthogonal direct sum of the compositions $\cC_\lambda/\Delta
  \hookrightarrow \cC/\Delta \twoheadrightarrow \cC/\cC^\lambda$. It follows
  that $\cC_\lambda/\Delta \hookrightarrow \cC/\Delta \twoheadrightarrow
  \cC/\cC^\lambda$ is an isomorphism for each irreducible~$\lambda$ if and only
  if $\Phi_R\circ \Phi_L$ is an isomorphism.  Consequently $\cC_\lambda/\Delta
  \hookrightarrow \cC/\Delta \twoheadrightarrow \cC/\cC^\lambda$ is an
  isomorphism if $\cC$ is left primary decomposable and right primary
  decomposable.
\end{remark}

Recall that $\cC$ is left primary decomposable if and only if
\ref{item:left-pd-existence} and \ref{item:left-pd-uniqueness} hold.  The
following is a stronger variation of~\ref{item:left-pd-existence}.

\begin{enumerate}[label=(LP\arabic*$'$)]
  \item\label{item:left-pd-strong-existence}
  Strong existence: for every $K\in \bK$, there exist irreducible factors
  $\lambda_1$, \dots,~$\lambda_n$ of $\Delta_K$ and $[K_1]\in
  \cC_{\lambda_1},\ldots,[K_n]\in \cC_{\lambda_n}$ such that $[K] \equiv
  [K_1]+\cdots+[K_n]\bmod \Delta$.
\end{enumerate}

\begin{definition}
  \label{definition:strong-pd}
  We say that $\cC=\bK/{\sim}$ is \emph{strongly primary decomposable}
  if~\ref{item:left-pd-strong-existence} and~\ref{item:left-pd-uniqueness} hold.
\end{definition}

\begin{proposition}
  \label{proposition:strong-pd-implies-2-sided}
  If $\cC$ is strongly primary decomposable, then $\cC$ is left primary
  decomposable and right primary decomposable.
\end{proposition}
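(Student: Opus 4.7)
The plan is to split the statement into its two halves. Left primary decomposability is essentially free: condition~\ref{item:left-pd-strong-existence} strengthens~\ref{item:left-pd-existence}, while~\ref{item:left-pd-uniqueness} is shared between the two definitions, so $\Phi_L$ is simultaneously surjective and injective and hence an isomorphism. Lemma~\ref{lemma:surj-inj-Phi_L-Phi_R}\ref{item:surj-Phi_L-Phi_R} then immediately hands us surjectivity of $\Phi_R$, reducing the proposition to proving $\Phi_R$ injective.

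For the injectivity, I would take $[K] \in \ker\Phi_R$, i.e.\ $[K]\in \cC^\lambda$ for every irreducible $\lambda\in R$, and aim to show $[K]\in\Delta$. The idea is to apply~\ref{item:left-pd-strong-existence} twice, once to $K$ itself and once to a cleverly chosen representative, and then compare via~\ref{item:left-pd-uniqueness}. First, using~\ref{item:left-pd-strong-existence} and grouping terms within each subgroup $\cC_{\lambda_i}$, write
\[
  [K] \equiv [K_1] + \cdots + [K_n] \bmod \Delta
\]
with $[K_i]\in\cC_{\lambda_i}$ and pairwise distinct $[\lambda_i]\in\bP$. Fix $j$. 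Since $[K]\in\cC^{\lambda_j}$, choose a representative $K'$ of $[K]$ with $\Delta_{K'}$ coprime to $\lambda_j$; self-duality~\ref{item:delta-symmetry} of $\Delta_{K'}$ then forces $\Delta_{K'}$ coprime to $\lambda_j^*$ as well, so no irreducible factor of $\Delta_{K'}$ is $*$-associate to $\lambda_j$. Applying~\ref{item:left-pd-strong-existence} to $K'$ and grouping produces a second expression
\[
  [K] \equiv [K'_1] + \cdots + [K'_s] \bmod \Delta
\]
with $[K'_\ell]\in\cC_{\mu_\ell}$, pairwise distinct $[\mu_\ell]$, and $[\mu_\ell]\ne[\lambda_j]$ for all $\ell$. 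Subtracting and collecting contributions by $*$-associate class $[\nu]$ yields a single relation $\sum_{[\nu]}[L_\nu]\equiv 0\bmod\Delta$ with $[L_\nu]\in\cC_\nu$ and pairwise distinct $[\nu]$, so~\ref{item:left-pd-uniqueness} applies; as the $[\lambda_j]$-component receives $[K_j]$ from the first sum and nothing from the second, one concludes $[K_j]\equiv 0\bmod\Delta$. Running $j$ over all indices gives $[K]\equiv 0\bmod\Delta$, as required.

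The main obstacle is purely bookkeeping: ensuring that the two strong decompositions can be legitimately merged into a single relation indexed by pairwise distinct $*$-associate classes so that~\ref{item:left-pd-uniqueness} is applicable. The enabling conceptual input is the equality $\cC^\lambda=\cC^{\lambda^*}$, a consequence of~\ref{item:delta-symmetry} which prevents the class $[\lambda_j]$ from reappearing via $\lambda_j^*$ when we pass to the auxiliary representative $K'$. No properties of $R$ or $\bK$ beyond~\ref{item:delta-symmetry}--\ref{item:delta-negation} are required.
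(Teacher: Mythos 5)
Your proof is correct and follows essentially the same strategy as the paper's: decompose $[K]$ via the existence condition, pick a representative of $[K]$ whose $\Delta$ is coprime to a given $\lambda_j$, apply strong existence to it, observe via~\ref{item:delta-symmetry} that the $*$-dual is also coprime so the resulting decomposition misses $[\lambda_j]$, and then invoke~\ref{item:left-pd-uniqueness}. The only organizational difference is that the paper first does a separate reduction showing one may assume $[K]\in\cC_\mu$ before the subtraction argument, whereas you fold that reduction into a single combined relation and apply~\ref{item:left-pd-uniqueness} directly to it; both routes apply the same lemmas the same number of times.
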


\begin{proof}
  If $\cC$ is strongly primary decomposable,  $\cC$ is obviously left primary
  decomposable.  Also, $\Phi_R$ is surjective by
  Lemma~\ref{lemma:surj-inj-Phi_L-Phi_R}\ref{item:surj-Phi_L-Phi_R}.  So it
  remains to show that $\Phi_R$ is injective.
  
  Suppose that $[K]$ lies in $\cC^\lambda$ for all irreducibles~$\lambda$.  The
  goal is to show that $[K]\in \Delta$.  We claim that the given $[K]$ can be
  assumed to lie in~$\cC_{\mu}$ for some~$[\mu]\in\bP$.  To prove this, first
  use~\ref{item:left-pd-existence} (or the surjectivity of $\Phi_L$) to write $[K] \equiv
  [K_1]+\cdots+[K_n] \bmod \Delta$ for some $K_i\in \cC_{\lambda_i}$, where
  $[\lambda_i]\ne [\lambda_j]$ for $i\ne j$. Fix~$i$. We have that $[K_i]
  \equiv [K]-\sum_{j\ne i} [K_j] \bmod \Delta$, $[K_j]\in \cC_{\lambda_j}
  \subset \cC^{\lambda_i}$ for all $j\ne i$, and $[K]\in \cC^{\lambda_i}$.  It
  follows that $[K_i] \in \cC^{\lambda_i}$.  Since $[K_i]\in \cC_{\lambda_i}$,
  $[K_i]\in \cC^{\lambda}$ for $[\lambda]\ne [\lambda_i]$.  So, $[K_i]$ lies
  in~$\cC^\lambda$ for all~$\lambda$. Note that it suffices to show that
  $[K_i]\in\Delta$ for all~$i$, to conclude that $[K]\in \Delta$. This proves
  the claim.
    
  Now, fix an irreducible~$\mu$.  Suppose $[K] \in \cC_\mu$ and $[K]\in
  \cC^\lambda$ for all~$\lambda$. Since $[K]\in \cC^{\mu}$, we have $[K]=[J]$
  for some $J\in \bK$ such that $\gcd(\Delta_J,\mu)=1$.
  By~\ref{item:left-pd-strong-existence}, $[J] \equiv [J_1]+\cdots[J_m] \bmod
  \Delta$ for some $[J_k]\in \cC_{\mu_k}$, where the $\mu_k$ are irreducible
  factors of~$\Delta_J$. In particular, every $\mu_k$ is relatively prime
  to~$\mu$.  Also, $\mu_k^*$ is relatively prime to $\mu$ too, since $\mu_k^*$
  divides $\Delta_J^* \doteq \Delta_J$ which is relatively prime to~$\mu$. So
  $[\mu_k] \ne [\mu]$ in $\bP$ for all~$k$. Since $-[K] + [J_1] + \cdots +
  [J_m]\in \Delta$ and $[K]\in \cC_\mu$, it follows that $[K]$ lies in $\Delta$
  by~\ref{item:left-pd-uniqueness}.
\end{proof}

\subsection{Extensions}
\label{subsection:pd-extensions}

Suppose that $(\bK,\sim,\chi)$ is as in
Section~\ref{subsection:pd-general-definition}, and $\cA$ is a subgroup
of~$\cC=\bK/{\sim}$. We have $\cA=\bA/{\sim}$ for the submonoid $\bA:=\{K\in
\bK\mid [K]\in \cA\}$.  Or conversely, if $\bA$ is a submonoid of $\bK$ such
that $K+J\sim 0$ and $K\in \bA$ imply $J\in \bA$, then $\cA := \bA/{\sim}$ is a
subgroup of~$\cC$.  Let $\cG=\cC/\cA$ be the quotient group.  We have $\cG =
\bK/{\approx}$, where the equivalence relation $\approx$ is defined by $K\approx
J$ if and only if $K\sim J+L$ for some $L\in \bA$.

We will relate primary decompositions of $\cA$, $\cC$ and~$\cG$.  To avoid
confusion, for $\cA$, $\cC$ and~$\cG$ respectively, denote by $\Delta(\cA)$,
$\Delta(\cC)$ and~$\Delta(\cG)$ their subgroups of classes represented by $K$
with $\Delta_K\doteq 1$.  We have $\Delta(\cA)=\Delta(\cC)\cap \cA$ and
$\Delta(\cG) = (\Delta(\cC)+\cA)/\cA \subset \cG=\cC/\cA$.  
It is straightforward to verify that the exact sequence
\[
  0 \to \cA \to \cC \to \cG \to 0
\]
gives rise to an exact sequence
\begin{gather*}
  0 \to \Delta(\cA) \to \Delta(\cC) \to \Delta(\cG) \to 0
\end{gather*}
and induces the following exact sequences for all irreducibles~$\lambda$
in~$\Q[t^{\pm1}]$.
\begin{gather*}
  0 \to \cA_\lambda \to \cC_\lambda \to \cG_\lambda \to 0
  \\
  0 \to \cA^\lambda \to \cC^\lambda \to \cG^\lambda \to 0
\end{gather*}
Consequently, rows of the following commutative diagram are exact.
\begin{equation}
  \label{equation:extension-pd-diagram}
  \def\bop{\bigoplus\limits_{[\lambda]\in\bP}}
  \def\sbop{\smash{\bop}}
  \begin{tikzcd}[
    row sep={9.5ex,between origins},
    column sep={9.5ex,between origins}
    ]
    0 \ar[rr]
    &&[-2.8em] \bop \cA_\lambda/\Delta(\cA)
    \ar[rr] \ar[d,"\Phi^\cA_L",shorten <=-2ex,pos=0]
    && \bop \cC/\Delta(\cC)
    \ar[rr] \ar[d,"\Phi^\cC_L",shorten <=-2ex,pos=0]
    && \bop \cG_\lambda/\Delta(\cG)
    \ar[rr] \ar[d,"\Phi^\cG_L",shorten <=-2ex,pos=0]
    &&[-3em] 0 
    \\
    0 \ar[rr]
    && \cA/\Delta(\cA) \ar[rr] \ar[d,"\Phi^\cA_R"]
    && \cC/\Delta(\cC) \ar[rr] \ar[d,"\Phi^\cC_R"]
    && \cG/\Delta(\cG) \ar[rr] \ar[d,"\Phi^\cG_R"]
    && 0
    \\
    0 \ar[rr]
    && \bop \cA/\cA^\lambda \ar[rr]
    && \bop \cC/\cC^\lambda \ar[rr]
    && \bop \cG/\cG^\lambda \ar[rr]
    && 0
  \end{tikzcd}
\end{equation}
For $\bullet=L$ and $R$, \eqref{equation:extension-pd-diagram} gives rise to an
exact sequence
\begin{equation}
  \label{equation:extension-snake}
  \begin{aligned}
  0 \to \Ker \Phi^\cA_\bullet
  & \to \Ker \Phi^\cC_\bullet \to \Ker \Phi^\cG_\bullet 
  \\
  &\to \Coker \Phi^\cA_\bullet
  \to \Coker \Phi^\cC_\bullet \to \Coker \Phi^\cG_\bullet \to 0
  \end{aligned}
\end{equation}
by the snake lemma.  It follows that $\Phi^\cC_\bullet$ is an isomorphism if
and only if $\Phi^\cA_\bullet$ is injective, $\Phi^\cG_\bullet$ is surjective
and the connecting map $\Ker \Phi^\cG_\bullet \to \Coker \Phi^\cA_\bullet$ is an
isomorphism.  The following is an immediate consequence.

\begin{theorem}
  \label{theorem:extension-pd}
  The group $\cC$ is left (respectively right) primary decomposable if both
  $\cA$ and $\cG$ are left (respectively right) primary decomposable.
\end{theorem}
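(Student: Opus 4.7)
The plan is to exploit the apparatus the excerpt has already assembled: the commutative diagram \eqref{equation:extension-pd-diagram} with exact rows, and the six-term exact sequence \eqref{equation:extension-snake} obtained by applying the snake lemma to it for $\bullet=L$ or $\bullet=R$. The theorem then becomes a direct inspection of this exact sequence under the hypothesis on $\cA$ and~$\cG$.

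More concretely, I would argue as follows. Fix $\bullet\in\{L,R\}$. By hypothesis, $\cA$ and $\cG$ are left (respectively right) primary decomposable, which by Definition~\ref{definition:weak-pd} means that $\Phi^\cA_\bullet$ and $\Phi^\cG_\bullet$ are isomorphisms. Hence
\[
  \Ker\Phi^\cA_\bullet = \Coker\Phi^\cA_\bullet
  = \Ker\Phi^\cG_\bullet = \Coker\Phi^\cG_\bullet = 0.
\]
Substituting these zeros into the exact sequence \eqref{equation:extension-snake} collapses it to
\[
  0 \to 0 \to \Ker\Phi^\cC_\bullet \to 0
  \to 0 \to \Coker\Phi^\cC_\bullet \to 0 \to 0,
\]
forcing $\Ker\Phi^\cC_\bullet = \Coker\Phi^\cC_\bullet = 0$. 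Thus $\Phi^\cC_\bullet$ is an isomorphism, i.e.\ $\cC$ is left (respectively right) primary decomposable.

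There is no real obstacle here: the substantive work is in the construction of the diagram \eqref{equation:extension-pd-diagram} with exact rows (for which the excerpt already verifies the required exactness of the $\cA$/$\cC$/$\cG$ sequences on $\Delta$, $\cC_\lambda$, and $\cC^\lambda$) and in the invocation of the snake lemma, both already done above the theorem statement. The only thing worth remarking on is that the snake lemma applies to the direct-sum rows without issue, since direct sums are exact in the category of abelian groups, so the functors $\bigoplus_{[\lambda]\in\bP}(-)_\lambda/\Delta(-)$ and $\bigoplus_{[\lambda]\in\bP}(-)/(-)^\lambda$ preserve the short exact sequence $0\to \cA\to \cC\to\cG\to 0$ in each component and hence on the direct sum.
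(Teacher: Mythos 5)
Your proof is correct and is precisely the paper's argument: the theorem is stated as an "immediate consequence" of the six-term exact sequence \eqref{equation:extension-snake} produced by the snake lemma applied to \eqref{equation:extension-pd-diagram}, and setting $\Ker\Phi^\cA_\bullet = \Coker\Phi^\cA_\bullet = \Ker\Phi^\cG_\bullet = \Coker\Phi^\cG_\bullet = 0$ is exactly how that consequence is read off. No discrepancy.
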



\begin{theorem}
  \label{theorem:extension-strong-pd}
  The group $\cC$ is strongly primary decomposable if both $\cA$ and $\cG$ are
  strongly primary decomposable.
\end{theorem}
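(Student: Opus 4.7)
The plan is to reduce to results already in hand (Proposition \ref{proposition:strong-pd-implies-2-sided} and Theorem \ref{theorem:extension-pd}) and then promote ordinary left decomposability to strong decomposability by pruning unwanted irreducibles. First I would apply Proposition \ref{proposition:strong-pd-implies-2-sided} to conclude that both $\cA$ and $\cG$ are left and right primary decomposable, and then invoke Theorem \ref{theorem:extension-pd} to deduce the analogous two-sided decomposability for $\cC$. By the remark following Lemma \ref{lemma:surj-inj-Phi_L-Phi_R}, this yields the key structural fact that for every irreducible~$\lambda$, the composition
\[
  \cC_\lambda/\Delta(\cC) \hookrightarrow \cC/\Delta(\cC)
  \twoheadrightarrow \cC/\cC^\lambda
\]
is an isomorphism.

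Next, I would verify the two conditions \ref{item:left-pd-strong-existence} and \ref{item:left-pd-uniqueness} of Definition \ref{definition:strong-pd} for $\cC$. Condition \ref{item:left-pd-uniqueness} is nothing but the uniqueness half of left primary decomposability of $\cC$, which is already established. So the real work is to verify strong existence \ref{item:left-pd-strong-existence}: given $K\in \bK$, produce a decomposition $[K]\equiv \sum [K_i]\bmod \Delta(\cC)$ with $[K_i]\in \cC_{\lambda_i}$ and every $\lambda_i$ an irreducible factor of $\Delta_K$.

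For this, I would start from any decomposition $[K]\equiv \sum_{i=1}^n [K_i]\bmod \Delta(\cC)$ with $[K_i]\in \cC_{\lambda_i}$, available from \ref{item:left-pd-existence} for $\cC$, and combine repeated terms (using that each $\cC_{\lambda}$ is a subgroup) so that the $[\lambda_i]\in\bP$ are pairwise distinct. I would then argue that any summand $[K_i]$ for which $\lambda_i$ fails to divide $\Delta_K$ is already trivial modulo~$\Delta(\cC)$ and hence can be discarded. To see this, reduce the displayed congruence modulo $\cC^{\lambda_i}$: for $j\ne i$, $[K_j]\in \cC_{\lambda_j}\subset \cC^{\lambda_i}$ because $[\lambda_j]\ne [\lambda_i]$, while $[K]\in \cC^{\lambda_i}$ by the hypothesis $\gcd(\Delta_K,\lambda_i)=1$. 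Thus $[K_i]\equiv 0$ in $\cC/\cC^{\lambda_i}$, and the isomorphism $\cC_{\lambda_i}/\Delta(\cC)\cong \cC/\cC^{\lambda_i}$ from the first step forces $[K_i]\equiv 0\bmod \Delta(\cC)$, as desired.

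The substantive obstacle is concentrated in the first step; once two-sided primary decomposability of $\cC$ is available, the pruning argument is a short diagram chase. The key conceptual point is that right primary decomposability, not left alone, is what rigidifies a decomposition enough to kill any primary summand indexed by an irreducible that is extraneous to~$\Delta_K$.
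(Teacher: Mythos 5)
Your proposal is correct, and it takes a genuinely different route from the paper. The paper's proof is a direct two-step reduction that never invokes Theorem~\ref{theorem:extension-pd} for the strong-existence part: it uses strong existence for $\cG$ to write $[K] \equiv \sum [J_i] \bmod \Delta(\cC)+\cA$ with the $\lambda_i$ factors of $\Delta_K$, adds terms of the form $-[J_i]$ (with matching Alexander polynomials via~\ref{item:delta-negation}) to land in $\cA$, invokes strong existence for $\cA$, and concatenates. Your proof instead first deduces that $\cC$ is left and right primary decomposable (via Proposition~\ref{proposition:strong-pd-implies-2-sided} and Theorem~\ref{theorem:extension-pd}), then uses the isomorphism $\cC_\lambda/\Delta(\cC) \cong \cC/\cC^\lambda$ (from the remark following Lemma~\ref{lemma:surj-inj-Phi_L-Phi_R}) to prune any summand $[K_i]\in\cC_{\lambda_i}$ with $\gcd(\Delta_K,\lambda_i)=1$ out of an arbitrary left-primary decomposition of~$[K]$. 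The pruning step is sound: reducing the congruence modulo $\cC^{\lambda_i}$ kills all other terms, so $[K_i]\in\cC_{\lambda_i}\cap\cC^{\lambda_i}$, and injectivity of $\cC_{\lambda_i}/\Delta(\cC)\to\cC/\cC^{\lambda_i}$ forces $[K_i]\in\Delta(\cC)$. Note that your argument actually establishes the stronger statement that left-plus-right primary decomposability implies strong primary decomposability for any $\cC$, which together with Proposition~\ref{proposition:strong-pd-implies-2-sided} shows these notions coincide, and makes Theorem~\ref{theorem:extension-strong-pd} a formal corollary of Theorem~\ref{theorem:extension-pd}. The paper's proof is more self-contained and constructive; yours is more structural and reveals the equivalence of the two decomposability notions.
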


\begin{proof}
  Suppose $\cA$ and $\cG$ are strongly primary decomposable.  Then by
  Proposition~\ref{proposition:strong-pd-implies-2-sided} and
  Theorem~\ref{theorem:extension-pd}, $\cC$ satisfies the uniqueness
  condition~\ref{item:left-pd-uniqueness}. So, it remains to verify the strong
  existence condition~\ref{item:left-pd-strong-existence} for~$\cC$.
  
  Fix $[K]\in \cC$.  Since $\cG$ satisfies~\ref{item:left-pd-strong-existence},
  we have $[K] \equiv [J_1]+\cdots+[J_n] \bmod \Delta(\cC)+\cA$ for some
  $[J_1]\in \cC_{\lambda_1}$,~\dots, $[J_n]\in \cC_{\lambda_n}$, where each
  $\lambda_i$ is an irreducible factor of~$\Delta_K$ and $\Delta_{J_i}$ is a
  power of~$S(\lambda_i)$. So, for some $[J]\in \cC$ with $\Delta_J \doteq 1$,
  we have
  \begin{equation}
    \label{equation:first-decomp}
    [K] -([J_1]+\cdots+[J_n]) +[J] \in \cA.
  \end{equation}
  Using property~\ref{item:delta-negation}, choose $J_1'$,~\dots, $J_n'$ such
  that $[J_i']=-[J_i]$ and $\Delta_{J_i'} \doteq \Delta_{J_i}$, and let
  $L=K+J_1'+\cdots+J_n'+J$. Then $[L]\in \cA$ by~\eqref{equation:first-decomp},
  and $\Delta_L \doteq \Delta_K\cdot \prod \Delta_{J_i}$
  by~\ref{item:delta-addivitity}.  In particular, irreducibles dividing
  $\Delta_L$ divides~$\Delta_K$. Since $\cA$
  satisfies~\ref{item:left-pd-strong-existence}, there is a decomposition
  \begin{equation*}
    [L] \equiv [L_1]+\cdots+[L_m] \mod \Delta(\cA) = \cA \cap \Delta(\cC),
  \end{equation*}
  where $[L_j] \in \cA_{\mu_j}$ and each $\mu_j$ is an irreducible factor
  of~$\Delta_K$.  It follows that
  \[
    [K] \equiv [J_1]+\cdots+[J_n] + [L_1]+\cdots+[L_m] \mod \Delta(\cC).
  \]
  Recall that $[J_i] \in \cC_{\lambda_i}$, $[L_j]\in \cA_{\mu_j} \subset
  \cC_{\mu_j}$ and $\lambda_i$ and $\mu_j$ are factors of~$\Delta_K$.
  So,~\ref{item:left-pd-strong-existence} is satisfied for the given~$[K]\in
  \cC$.
\end{proof}

\subsection{Knot concordance and primary decomposition}
\label{subsection:pd-in-knot-concordance}

\subsubsection*{Algebraic concordance over $\Q$}

Levine's work on knot concordance provides an algebraic analogue of the knot
concordance group, which is now called the algebraic concordance
group~\cites{Levine:1969-1,Levine:1969-2}.  The algebraic concordance group over
$\Q$ is known to be left and right primary decomposable in our sense.  We
describe this using Blanchfield linking forms over~$\Q[t^{\pm1}]$, while
Levine's original papers~\cites{Levine:1969-1,Levine:1969-2} use Seifert
matrices.

Let $\Q(t)$ be the rational function field.  A $(\Q(t)/\Q[t^{\pm1}])$-valued
\emph{linking form} is defined to be a map $B\colon V\times V\to
\Q(t)/\Q[t^{\pm1}]$, with $V$ a finitely generated $\Q[t^{\pm1}]$-module such
that $V\otimes_{\Q[t^{\pm1}]} \Q(t)=0$, which is sesquilinear and nonsingular.
That is, $B(x,y)=B(y,x)^*$, $y\mapsto B(x,-)$ is $\Q[t^{\pm1}]$-linear for all
$x\in V$, and the adjoint $V\to \Hom_{\Q[t^{\pm1}]}(V,\Q(t)/\Q[t^{\pm1}])$ is an
isomorphism. 
Let $\bK$ be the monoid of $(\Q(t)/\Q[t^{\pm1}])$-valued linking forms (under direct sum operation), and define $\chi\colon \bK \to \Q[t^{\pm1}]/{\doteq}$ by
$\chi(B)=\Delta_B(t)$, where the Alexander polynomial $\Delta_B(t)$ is defined
to be the order of the torsion module $V$ over~$\Q[t^{\pm1}]$.
A linking form $B$ is \emph{metabolic} if there is a submodule $P\subset V$ such
that $P=\{x\in V\mid B(x,P)=0\}$.  We say that $B$ and $B'$ are Witt equivalent
if there are metabolic linking forms $H$ and $H'$ such that the orthogonal sums
$B\oplus H$ and $B'\oplus H$ are isomorphic. The set $\cG$ of Witt equivalence
classes of linking forms is an abelian group under orthogonal sum.\footnote{For
the study of knots, often the Witt group of linking forms $B$ satisfying
$\Delta_B(1)\Delta_B(-1)\ne 0$ is considered.  One can also consider
$(-1)$-linking forms, which satisfy $B(x,y)=-B(y,x)^*$ instead of
$B(x,y)=B(y,x)^*$. Variations of
Theorem~\ref{theorem:algebraic-concordance-primary-decomposition} holds for
these cases as well.} The conditions~\ref{item:delta-symmetry},
\ref{item:delta-addivitity} and \ref{item:delta-negation} stated in
Section~\ref{subsection:pd-general-definition} hold.  By definition,
$\Delta_B(t)$ is trivial if and only if $V$ is trivial.  It follows that the
subgroup $\Delta=\{[B]\in \cG \mid \Delta_B \doteq 1\}$ is trivial.

The following result is essentially due to Levine~\cite{Levine:1969-2}.

\begin{theorem}[Levine~\cite{Levine:1969-2}]
  \label{theorem:algebraic-concordance-primary-decomposition}
  The group $\cG$ is left and right primary decomposable.
  \[
    \bigoplus_{[\lambda]} \cG_\lambda \xrightarrow[\cong]{\Phi_L} 
    \cG \xrightarrow[\cong]{\Phi_R} \bigoplus_{[\lambda]} \cG/\cG^\lambda
  \]
\end{theorem}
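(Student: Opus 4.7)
The plan is to establish the stronger statement that $\cG$ is \emph{strongly} primary decomposable in the sense of Definition~\ref{definition:strong-pd}, from which both asserted isomorphisms follow at once by Proposition~\ref{proposition:strong-pd-implies-2-sided}.

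For the existence condition~\ref{item:left-pd-strong-existence}, I would decompose any linking form $B \colon V \times V \to \Q(t)/\Q[t^{\pm 1}]$ along the primary decomposition of its underlying module. Since $\Q[t^{\pm 1}]$ is a principal ideal domain and $V$ is a finitely generated torsion module, $V = \bigoplus_{[\mu]} V_\mu$, where $[\mu]$ ranges over associate classes of irreducibles in $\Q[t^{\pm 1}]$ dividing $\Delta_B$ and $V_\mu$ is the $\mu$-primary submodule. Sesquilinearity forces the orthogonality $B(V_\mu, V_\nu) = 0$ whenever $\nu \not\doteq \mu^*$: for $x \in V_\mu$ and $y \in V_\nu$, choosing $k, \ell$ with $\mu^k x = 0$ and $\nu^\ell y = 0$, the element $B(x, y) \in \Q(t)/\Q[t^{\pm 1}]$ is annihilated by both $(\mu^*)^k$ and $\nu^\ell$, and these are coprime in $\Q[t^{\pm 1}]$. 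Grouping associate classes into $*$-associate classes $[\lambda] \in \bP$ yields an orthogonal decomposition $B = \bigoplus_{[\lambda] \in \bP} B_{[\lambda]}$, where $B_{[\lambda]}$ is supported on $V_\lambda$ if $\lambda$ is self-dual and on $V_\lambda \oplus V_{\lambda^*}$ otherwise, and $\Delta_{B_{[\lambda]}}$ is a power of $S(\lambda)$, so $[B_{[\lambda]}] \in \cG_\lambda$. Because the irreducibles appearing are exactly the prime factors of $\Delta_B$, this delivers condition~\ref{item:left-pd-strong-existence}.

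For the uniqueness condition~\ref{item:left-pd-uniqueness}, suppose $B_1, \ldots, B_n$ satisfy $[B_i] \in \cG_{\lambda_i}$ with pairwise non-$*$-associate $\lambda_i$ and $[B_1] + \cdots + [B_n] = 0$ in $\cG$. Then $B := B_1 \oplus \cdots \oplus B_n \oplus H$ admits a metabolizer $P$ for some metabolic form $H$. Applying the first step to $H$, decompose $H = \bigoplus_{[\mu]} H_{[\mu]}$ into its $[\mu]$-primary orthogonal summands. Since primary decomposition commutes with taking submodules of torsion modules over a principal ideal domain, $P = \bigoplus_{[\mu]} P_{[\mu]}$ with $P_{[\mu]}$ contained in the $[\mu]$-primary part of the total module, and orthogonality of distinct primary components forces each $P_{[\mu]}$ to be a metabolizer of the corresponding primary summand of $B$. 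Specializing to $[\mu] = [\lambda_i]$, that summand is $B_i \oplus H_{[\lambda_i]}$, so $B_i \oplus H_{[\lambda_i]}$ is metabolic. Since $H_{[\lambda_i]}$ is itself metabolic (primary pieces of a metabolic form are metabolic, by the same submodule argument applied to a metabolizer of $H$), we conclude $[B_i] = -[H_{[\lambda_i]}] = 0$ in $\cG$.

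The main technical point I expect to require care is verifying that a metabolizer of an orthogonal sum of primary-disjoint linking forms really does split as an orthogonal sum of metabolizers of the individual summands, together with the parallel fact that metabolicity descends to primary pieces. Both rest on the principal ideal domain structure of $\Q[t^{\pm 1}]$ combined with the sesquilinearity-driven orthogonality established in the first paragraph, and they are the content that one must carefully extract from Levine's original Seifert-matrix arguments in \cite{Levine:1969-2} and translate into the Blanchfield-pairing formalism. Once strong primary decomposability is established, Proposition~\ref{proposition:strong-pd-implies-2-sided} simultaneously yields that $\Phi_L$ and $\Phi_R$ are isomorphisms.
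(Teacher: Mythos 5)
The paper does not supply its own proof of this theorem: it simply attributes the result to Levine and remarks afterward that ``Indeed, $\cG$ is strongly primary decomposable.'' Your sketch correctly fills in exactly that assertion, and the argument is sound: the orthogonality $B(V_\mu, V_\nu) = 0$ for $\nu \not\doteq \mu^*$ follows from sesquilinearity together with coprimality in the PID $\Q[t^{\pm1}]$ as you say, a metabolizer of an orthogonal sum of primary-disjoint forms splits along the primary decomposition because a submodule of a torsion module over a PID decomposes compatibly with that of the ambient module, and Proposition~\ref{proposition:strong-pd-implies-2-sided} then delivers both isomorphisms. This is the standard translation of Levine's Seifert-matrix argument into the Blanchfield-form language and matches the route the paper indicates.
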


Indeed, $\cG$ is strongly primary decomposable.  An elegant generalization of
Levine's Theorem~\ref{theorem:algebraic-concordance-primary-decomposition} to
the case of the group ring of a (noncommutative) free group was developed in
work of Sheiham~\cite[Section~3]{Sheiham:2006-1}.  See
also~\cite{Sheiham:2001-1}.

\subsubsection*{Algebraic concordance over~$\Z$}

For a knot $K$ in $S^3$, the Blanchfield form~\cite{Blanchfield:1957-1} $B_K$
defined on the Alexander module $H_1(S^3\sm K;\Q[t^{\pm1}])$ is a linking form
in the above sense.  We have $\Delta_K = \Delta_{B_K}$.  The association
$K\mapsto B_K$ induces a homomorphism of the (topological and smooth) knot
concordance group into $\cG$.  The image of this homomorphism is characterized
as follows.  By replacing $(\Q[t^{\pm1}],\Q(t))$ with
$(\Z[t^{\pm1}],S^{-1}\Z[t^{\pm1}])$ where $S=\{f(t)\in \Z[t^{\pm1}] \,|\,
f(1)=\pm1\}$, one constructs an integral analogue of $\cG$, say~$\cG(\Z)$.  It
is known that the natural map $\cG(\Z)\to \cG$ is
injective~\cite{Levine:1969-2}, and the subgroup $\cG(\Z)$ of $\cG$ is exactly
the image of the concordance group of knots in~$S^3$.  A knot $K$ such that
$[B_K]=0$ in $\cG(\Z) \subset \cG$ is said to be \emph{algebraically slice}.

Levine showed that both $\cG$ and $\cG(\Z)$ are isomorphic to $\Z^\infty \oplus
(\Z_2)^\infty \oplus (\Z_4)^\infty$~\cite{Levine:1969-2}.  The difference of the
structures of $\cG$ and $\cG(\Z)$ was studied in work of
Stoltzfus~\cite{Stoltzfus:1977-1}. Among his main results, it was shown that
$\Phi_L\colon \bigoplus_{[\lambda]} \cG(\Z)_\lambda \to \cG(\Z)$ is not
surjective (while it is injective due to Levine's work~\cite{Levine:1969-1}).
In particular, $\cG(\Z)$ is not left primary decomposable. (To the knowledge of
the author, it was not addressed in the literature whether $\cG(\Z)$ is right
primary decomposable.)

\subsubsection*{Topological knot concordance and algebraically slice knots}

Let $\bK$ be the set of isotopy classes of oriented knots in~$S^3$, with
connected sum as a monoid operation.  If $K$, $K' \in \bK$ are topologically
concordant, write ${K\sim K'}$. The set of equivalence classes $\cC^\top =
\bK/{\sim}$ is the topological knot concordance group. Define $\chi\colon\bK \to
\Q[t^{\pm1}]/{\doteq}$ by $\chi(K) = \Delta_K(t)$, the Alexander polynomial
of~$K$.  The assumptions~\ref{item:delta-symmetry}, \ref{item:delta-addivitity}
and \ref{item:delta-negation} in Section~\ref{subsection:pd-general-definition}
are standard properties of the Alexander polynomial.  By work of
Freedman~\cite{Freedman:1984-1}, the subgroup $\Delta=\{[K]\in \cC^\top \mid
\Delta_K \doteq 1\}$ is trivial.

Let $\cA^\top$ be the topological concordance group of algebraically slice
knots.  Then, since
\[
  0\to \cA^\top \to \cC^\top \to \cG^\Z \to 0
\]
is exact, $\cC^\top$ is right primary decomposable if so are both $\cA^\top$
and~$\cG^\Z$.  Regarding left primary decomposability, since
$\bigoplus_{[\lambda]} \cG^\Z_\lambda \to \cG^\Z$ is not surjective due to
Stoltzfus~\cite{Stoltzfus:1977-1}, $\Phi_L^\top\colon \bigoplus_{[\lambda]}
\cC^\top_\lambda \to \cC^\top$ is not surjective by the exact
sequence~\eqref{equation:extension-snake}. In particular, $\cC^\top$ is not left
primary decomposable.  The following appear to be interesting.

\begin{question}
  \phantomsection\leavevmode\Nopagebreak
  \label{question:pd-top-conc}
  \begin{enumerate}
    \item\label{item:top-slice-injectivity}
    Is $\Phi_L^\top\colon \bigoplus_{[\lambda]} \cC^\top_\lambda \to \cC^\top$
    injective?
    \item\label{item:right-pd-top-conc} Is $\cC^\top$ right primary
    decomposable?
  \end{enumerate}
\end{question}

By Lemma~\ref{lemma:surj-inj-Phi_L-Phi_R}\ref{item:inj_Phi_L-Phi_R}, an
affirmative answer to
Question~\ref{question:pd-top-conc}\ref{item:right-pd-top-conc} implies that the
answer to Question~\ref{question:pd-top-conc}\ref{item:top-slice-injectivity} is
affirmative and that the splitting property in
Lemma~\ref{lemma:surj-inj-Phi_L-Phi_R}\ref{item:inj_Phi_L-Phi_R} holds.

In the literature, there are related results which provide affirmative answers
to the following splitting question for certain concordance invariants (or
obstructions): if $K$ and $J$ have relatively prime Alexander polynomials and if
the invariant vanishes for $K\# J$, then does the invariant vanish for each of
$K$ and~$J$? See work of S.-G. Kim~\cite{Kim:2005-2} which addresses the case of
Casson-Gordon invariants, and work of and S.-G. Kim and T.
Kim~\cites{Kim-Kim:2008-1,Kim-Kim:2014-1} on $L^2$-signature defects.  These may
be viewed as evidence supporting an affirmative answer to the injectivity part
for $\Phi_R^\top$ in
Question~\ref{question:pd-top-conc}\ref{item:right-pd-top-conc}, which implies
the splitting property stated in
Lemma~\ref{lemma:surj-inj-Phi_L-Phi_R}\ref{item:inj_Phi_L-Phi_R}. Also, T.
Kim~\cite{Kim:2017-1} proved results related to primary decomposition structures
in~$\cA^\top$.

\subsubsection*{Smooth concordance and topologically slice knots}

Let $\bK$ and $\chi$ be as above, but now write $K\sim K'$ if $K$ and $K'$ are
smoothly concordant.  Then $\cC^\smooth=\bK/{\sim}$ is the smooth knot
concordance group.

Similarly to the case of $\cC^\top$, the homomorphism
$
  \Phi^\smooth_L\colon
  \bigoplus_{[\lambda]} \cC^\smooth_\lambda/\Delta \to \cC^\smooth/\Delta
$
is not surjective since $\bigoplus_{[\lambda]} \cG^\Z_\lambda \to \cG^\Z$ is not
surjective.  So $\cC^\smooth$ is not left primary decomposable.

\begin{question}
  \phantomsection\leavevmode\Nopagebreak
  \label{question:pd-smooth-conc}
  \begin{enumerate}
    \item\label{item:smooth-slice-injectivity}
    Is $\Phi_L^\smooth \colon \bigoplus_{[\lambda]} \cC^\smooth_\lambda/\Delta
    \to \cC^\smooth/\Delta$ injective?
    \item\label{item:right-pd-smooth-conc} Is $\cC^\smooth$ right primary
    decomposable?
  \end{enumerate}
\end{question}

Again by Lemma~\ref{lemma:surj-inj-Phi_L-Phi_R}\ref{item:inj_Phi_L-Phi_R}, an
affirmative answer to
Question~\ref{question:pd-smooth-conc}\ref{item:right-pd-smooth-conc} implies
that the answer to
Question~\ref{question:pd-smooth-conc}\ref{item:smooth-slice-injectivity} is
affimative and that the splitting property stated in
Lemma~\ref{lemma:surj-inj-Phi_L-Phi_R}\ref{item:inj_Phi_L-Phi_R} holds.

The smooth concordance group of topologically slice knots $\cT$ is the kernel of
the natural homomorphism $\cC^\smooth \to \cC^\top$.  By
Theorem~\ref{theorem:extension-pd}, $\cC^\smooth$ is right primary decomposable
if both $\cT$ and $\cC^\top$ are right primary decomposable.

Question~\ref{question:pd-top-slice} in the introduction asks whether $\cT$ is
left and right primary decomposable in the sense of
Definition~\ref{definition:weak-pd}, and Theorem~\ref{theorem:main-top-slice}
provides supporting evidence for affirmative answers.

Recall that the definition in Section~\ref{subsection:pd-general-definition}
says that $[K]\in \cT_\lambda$ if $\Delta_K \doteq S(\lambda)^k$ for some $k\ge
0$. For topologically slice~$K$, $\Delta_K \doteq ff^*$ for some $f\in
\Q[t^{\pm1}]$, due to Fox and Milnor~\cite{Fox-Milnor:1966-1}.  From this it
follows that $\Delta_K \doteq S(\lambda)^k$ for some $k\ge 0$ if and only if
$\Delta_K \doteq (\lambda\lambda^*)^\ell$ for some $\ell\ge 0$.  Thus the
definition of $\cT_\lambda$ in Section~\ref{subsection:pd-general-definition}
agrees with that in Section~\ref{subsection:pd-top-slice}.

Livingston informed us that the techniques
of~\cite{Hedden-Livingston-Ruberman:2010-1} can be used to show the following:
there are non-associate self-dual irreducible polynomials $\lambda_1$,
$\lambda_2$ and $\lambda_3$ and a topologically slice knot $K$ with
$\Delta_K=(\lambda_1\lambda_2\lambda_3)^2$ which is not smoothly concordant to a
connected sum $K_1\#K_2\#K_3$ for any knots $K_1$, $K_2$ and $K_3$ with
$\Delta_{K_i}$ a power of~$\lambda_i$.  It says that $\cT$ does not satisfy
the strong existence condition~\ref{item:left-pd-strong-existence}.  This method
does not provide counterexamples to the left/right primary decomposability
of~$\cT$.

\subsubsection*{Filtrations of the knot concordance groups}

For the bipolar filtration $\{\cT_n\}$ of $\cT$ defined by Cochran, Harvey and
Horn~\cite{Cochran-Harvey-Horn:2012-1}, Question~\ref{question:pd-bipolar} in
the introduction asks whether the associated graded $\gr_n(\cT)=\cT_n/\cT_{n+1}$
is left and right primary decomposable in the sense of
Definition~\ref{definition:weak-pd}.  Theorem~\ref{theorem:main-bipolar}
supports an affirmative answer, by presenting a large subgroup which is left and
right (indeed strongly) primary decomposable into infinitely many infinite rank
primary parts.

In addition, whether $\cT_n$ is left/right primary decomposable appears to be an
interesting problem.  By Theorems~\ref{theorem:extension-pd}
and~\ref{theorem:extension-strong-pd}, $\cT_n$ is left/right primary
decomposable if so are both $\cT_{n+1}$ and~$\gr_n(\cT)$.

In~\cite{Cochran-Orr-Teichner:1999-1}, Cochran, Orr and Teichner introduced a
descending filtration
\[
  \{0\} \subset \cdots \subset \cF_{n.5} \subset \cF_n \subset \cdots \subset
  \cF_1 \subset \cF_{0.5} \subset \cF_{0} \subset \cC^\top
\]
of the topological knot concordance group~$\cC^\top$.  A knot $K$ represents an
element of $\cF_h$ ($h\in\frac12\Z_{\ge0}$) if $K$ is $h$-solvable in the sense
of~\cite[Definitions~8.5 and~8.7]{Cochran-Orr-Teichner:1999-1}.  

\begin{question}
Are $\cF_h$ and $\cF_h/\cF_{h+0.5}$ left and/or right primary decomposable?
\end{question}

Once again by Theorem~\ref{theorem:extension-pd}, $\cF_h$ is left/right primary
decomposable if so are both $\cF_{h+0.5}$ and~$\cF_n/\cF_{h+0.5}$. For integers
$n>0$, it is unknown whether~$\cF_{n.5}/\cF_{n+1}$ is nontrivial. Recently
Davis, Martin, Otto and Park showed that elements in $\cF_{0.5}$ represented by
a genus one knot are contained in~$\cF_1$~\cite{Davis-Martin-Otto-Park:2016-1}.
For the other half of the associated graded $\cF_n/\cF_{n.5}$, Cochran, Harvey
and Leidy provided strong evidence which support the conjecture that the
associated graded $\cF_n/\cF_{n.5}$ is right primary decomposable, for all
integers~${n\ge 0}$~\cites{Cochran-Harvey-Leidy:2009-2,
Cochran-Harvey-Leidy:2009-3}.  Indeed, they proposed a highly refined primary
decomposition conjecture for the associated graded $\cF_n/\cF_{n.5}$, using
non-commutative localizations~\cite[p.~444]{Cochran-Harvey-Leidy:2009-2}, and
they showed that refined primary parts reveal interesting
structures~\cites{Cochran-Harvey-Leidy:2009-2,Cochran-Harvey-Leidy:2009-3}. We
remark that aforementioned earlier works of S.-G. Kim and T. Kim
~\cites{Kim:2005-2,Kim-Kim:2008-1} also provide supporting evidence for the
case of~$\cF_1/\cF_{1.5}$.

\subsection{Rational homology 3-spheres and primary decomposition}
\label{subsection:pd-in-rational-H-cob}

Let $\Theta^\top_\Q$ and $\Theta^\smooth_\Q$ be the topological and smooth
rational homology cobordism groups of rational homology 3-spheres, respectively.
For a rational homology 3-sphere $Y$, let $\Delta_Y = |H_1(Y)|\in \Z$, the order
of the first homology with integral coefficients. The association $\chi\colon Y
\mapsto \Delta_Y = |H_1(Y)|$ satisfies conditions~\ref{item:delta-symmetry},
\ref{item:delta-addivitity} and~\ref{item:delta-negation}.  So, our general
definition of primary decomposition applies to $\Theta^\top_\Q$
and~$\Theta^\smooth_\Q$. 

For the topological case, the subgroup $\Delta=\{[Y] \in \Theta^\top_\Q \mid
H_1(Y)=0\}$ is trivial, since every integral homology 3-sphere bounds a
contractible compact 4-manifold~\cite[Section~9.3C]{Freedman-Quinn:1990-1}\@.
So, the left and right primary decompositions concern the homomorphisms
\[
  \Phi_L\colon \bigoplus_p (\Theta^\top_\Q)_p \to \Theta^\top_\Q, \quad
  \Phi_R\colon \Theta^\top_\Q \to \bigoplus_p \Theta^\top_\Q/(\Theta^\top_\Q)^p,
\]
where $p$ varies over primes in~$\Z$.  Here, the primary parts
$(\Theta^\top_\Q)_p$ is generated by $\Z[\frac1p]$-homology spheres, and
$(\Theta^\top_\Q)^p$ is generated by $\Z_p$-homology spheres.

The linking form $L_Y\colon H_1(Y)\times H_1(Y)\to \Q/\Z$ of a rational homology
3-sphere $Y$ gives fundamental information.  Algebraically, a $(\Q/\Z)$-valued
linking form is a nonsingular symmetric bilinear form $L\colon A \times A\to
\Q/\Z$ with $A$ a finite abelian group.  The Witt group of $(\Q/\Z)$-valued
linking forms, which we denote by $W(\Q/\Z)$, is defined in the standard manner.
There is a homomorphism $\Theta^\top_\Q \to W(\Q/\Z)$, which sends the class of
a rational homology sphere $Y$ to the Witt class of the associated linking
form~$L_Y$.  This is surjective due to~\cite{Kawauchi-Kojima:1980-1}.  The
following is a well known conjecture~\cite{Kim-Livingston:2014-1}*{p.~4}:

\begin{conjecture}
  \label{conjecture:QHcob-linking-form}
  The homomorphism $\Theta^\top_\Q \to W(\Q/\Z)$ is an isomorphism.
\end{conjecture}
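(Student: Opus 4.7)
The plan is to attack injectivity, since surjectivity is already recorded in the paragraph preceding the conjecture (due to Kawauchi--Kojima). So suppose $Y$ is a rational homology 3-sphere whose linking form $L_Y$ represents $0$ in $W(\Q/\Z)$; equivalently, there is a metabolizer $P\subset H_1(Y)$ with $P=P^\perp$ under $L_Y$. The task is to produce a topological 4-manifold $W$ with $\partial W=Y$ and $H_*(W;\Q)=H_*(\mathrm{pt};\Q)$, i.e.\ a topological rational homology 4-ball bounded by~$Y$.

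My first move would be to bring the primary decomposition machinery of this appendix to bear, since $L_Y$ splits canonically as an orthogonal sum of its $p$-primary parts and the target $W(\Q/\Z)$ decomposes accordingly as $\bigoplus_p W(\Q_p/\Z_p)$. If one can first show $\Theta^\top_\Q$ is right primary decomposable in the sense of Definition~\ref{definition:weak-pd} (a worthwhile intermediate target given the vanishing of $\Delta$ in $\Theta^\top_\Q$ noted in Section~\ref{subsection:pd-in-rational-H-cob}), then together with the fact that $L_Y$ is detected prime-by-prime it suffices to verify the conjecture on each primary quotient $\Theta^\top_\Q/(\Theta^\top_\Q)^p$. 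Reducing to one prime at a time allows one to work with $\Z[1/q]$-homology spheres for $q\ne p$ and makes the metabolizer problem more tractable. Granting this reduction, I would start from any compact topological 4-manifold $W_0$ bounded by~$Y$ (say, simply-connected, obtained from a surgery presentation), use the Poincar\'e--Lefschetz duality sequence of $(W_0,Y)$ to match the metabolizer $P\subset H_1(Y)$ with a ``Lagrangian'' sublattice of $H_2(W_0;\Q)$ relative to the intersection form, and then kill this sublattice by a sequence of topological surgeries along locally flat 2-spheres, appealing to Freedman's disk embedding theorem to realize the relevant classes as embedded spheres with trivial normal bundle.

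The hard part, of course, is step of performing those surgeries: in dimension four even topologically one needs not only algebraic intersection data to vanish but also secondary obstructions (framing, Kirby--Siebenmann, and, once the fundamental group of the complement becomes nontrivial, $\mu$-invariants and the issues that block Freedman's theorem outside of good-fundamental-group situations) to cooperate. Maintaining the hypotheses needed for Freedman-style embeddings after each surgery is inductively delicate. It is precisely this balance between the algebraic input from a metabolizer of the linking form and the geometric input required for topological surgery that has kept Conjecture~\ref{conjecture:QHcob-linking-form} open; a realistic route to a proof probably requires either (a) a purely 3-dimensional/algebraic reduction that sidesteps 4-dimensional surgery altogether, perhaps via a universal construction of bounding 4-manifolds from metabolizers akin to Kawauchi--Kojima's surjectivity argument run in reverse, or (b) new results on topological 4-manifolds with controlled non-simply-connected fundamental groups that extend Freedman's category beyond the good groups. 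Without one of these advances the plan above produces, at best, a rational homology cobordism after stabilization by additional $\C P^2$'s, i.e.\ a statement weaker than the conjecture itself.
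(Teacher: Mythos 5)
This statement is an open conjecture, not a theorem: the paper labels it as such and cites \cite{Kim-Livingston:2014-1} for its well-known status, so there is no proof in the paper to compare against. You were right not to manufacture one. Your write-up correctly records the surjectivity input from Kawauchi--Kojima, correctly restates injectivity as the assertion that a rational homology 3-sphere with Witt-trivial linking form must bound a topological rational homology 4-ball, and correctly identifies the central obstruction: Freedman-style topological surgery in the non-simply-connected setting, where good-fundamental-group hypotheses are not available and secondary (framing and Kirby--Siebenmann) issues intervene. Your concluding concession --- that without new 4-dimensional input the method only yields a stabilized rational homology cobordism --- is an honest and accurate assessment.

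One small remark on the logical structure of your proposed reduction: you suggest first establishing right primary decomposability of $\Theta^\top_\Q$ and then verifying the conjecture prime by prime, but the paper's Section~\ref{subsection:pd-in-rational-H-cob} runs the implication the other way --- Conjecture~\ref{conjecture:QHcob-linking-form} together with the (known) primary decomposability of $W(\Q/\Z)$ would \emph{imply} that $\Theta^\top_\Q$ is left and right primary decomposable. Both directions are of interest, but one should be aware that right primary decomposability of $\Theta^\top_\Q$ is itself currently unknown and is not obviously easier than the conjecture; treating it as an available intermediate step would need independent justification. This does not affect the correctness of your assessment that the conjecture is open; it only means your ``reduction'' is better read as a statement that the two open problems are closely linked, rather than as a usable step toward a proof.
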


Also, it is a standard fact that the Witt group $W(\Q/\Z)$ is left and right
primary decomposable.  (Indeed the strong existence
condition~\ref{item:left-pd-strong-existence} is also satisfied.)  The primary
part $W(\Q/\Z)_p$ is the Witt group of linking forms defined on $p$-torsion
finite abelian groups.  So, an affirmative answer to the above conjecture
implies that $\Theta^\top$ is left and right primary decomposable.

For the smooth case, the left
and right primary decompositions concern the homomorphisms
\[
  \Phi_L\colon \bigoplus_p (\Theta^\smooth_\Q)_p/\Delta 
  \to \Theta^\smooth_\Q/\Delta, \quad
  \Phi_R\colon \Theta^\smooth_\Q/\Delta 
  \to \bigoplus_p \Theta^\smooth_\Q/(\Theta^\smooth_\Q)^p,
\]
where $\Delta\subset \Theta^\smooth_\Q$ is the subgroup generated by the classes
of integral homology 3-spheres.  A result of S.-G. Kim and
Livingston~\cite[Proposition, p.~184]{Kim-Livingston:2014-1} says that this
$\Phi_L$ is not surjective, and thus $\Theta^\smooth_\Q$ is not left primary
decomposable.  We have the following questions.

\begin{question}
  \phantomsection\leavevmode\Nopagebreak
  \begin{enumerate}
    \item Is $\Phi_L\colon \bigoplus_p (\Theta^\smooth_\Q)_p/\Delta \to
    \Theta^\smooth_\Q / \Delta$ injective?
    \item Is $\Theta^\smooth_\Q$ right primary decomposable?
  \end{enumerate}
\end{question}

By Theorem~\ref{theorem:extension-pd}, $\Theta^\smooth_\Q$ is right primary
decomposable if Conjecture~\ref{conjecture:QHcob-linking-form} is true and the
rational homology cobordism group of rational homology 3-spheres bounding a
topological rational homology 4-ball is right primary decomposable.

\bibliography{research}

\end{document}